\definecolor{green}{HTML}{2ECC71}
\definecolor{blue}{HTML}{3498DB}
\definecolor{red}{HTML}{E74C3C}
\definecolor{orange}{HTML}{FD6A02}
\def\@endtheorem{\endtrivlist}
\Crefname{paragraph}{\S}{\SS}
\Crefname{equation}{}{}
\Crefname{enumi}{}{}
\Crefname{conditioni}{Condition}{Conditions}
\Crefname{conditionalti}{Condition}{Conditions}
\newtheorem{theorem}{Theorem}[section]
\newtheorem*{theorem*}{Theorem}
\Crefname{theorem}{Theorem}{Theorems}
\newtheorem{theoremintro}{Theorem}
\Crefname{theoremintro}{Theorem}{Theorems}
\Crefname{theoremintro}{Assumption}{Assumptions}
\newtheorem{lemma}[theorem]{Lemma}
\Crefname{lemma}{Lemma}{Lemmas}
\newtheorem{proposition}[theorem]{Proposition}
\Crefname{proposition}{Proposition}{Propositions}
\newtheorem{corollary}[theorem]{Corollary}
\newtheorem*{corollary*}{Corollary}
\Crefname{corollary}{Corollary}{Corollaries}
\Crefname{conjecture}{Conjecture}{Conjectures}
\newtheorem{assumption}{Assumption}
\theoremstyle{definition}
\Crefname{example}{Example}{Examples}
\newtheorem*{example*}{Example}
\Crefname{assumption}{Assumption}{Assumptions}
\newtheorem{definition}[theorem]{Definition}
\Crefname{definition}{Definition}{Definitions}
\Crefname{question}{Question}{Questions}
\theoremstyle{remark}
\newtheorem{remark}[theorem]{Remark}
\Crefname{remark}{Remark}{Remarks}
\numberwithin{equation}{section} %
\DeclarePairedDelimiter{\paren}{\lparen}{\rparen}
\DeclarePairedDelimiter{\bracket}{\lbrack}{\rbrack}
\DeclarePairedDelimiter{\set}{\lbrace}{\rbrace}
\DeclarePairedDelimiter{\abs}{\lvert}{\rvert}
\DeclarePairedDelimiter{\norm}{\lVert}{\rVert}
\DeclarePairedDelimiterX{\psh}[2]{\langle}{\rangle}{#1, #2}
\DeclarePairedDelimiterXPP{\Exp}[1]{\exp}{\lparen}{\rparen}{}{#1}
\DeclarePairedDelimiterXPP{\Log}[1]{\log}{\lparen}{\rparen}{}{#1}
\DeclarePairedDelimiterXPP{\Inf}[1]{\inf}{\lbrace}{\rbrace}{}{#1}
\DeclarePairedDelimiterXPP{\Sup}[1]{\sup}{\lbrace}{\rbrace}{}{#1}
\DeclarePairedDelimiterXPP{\Max}[1]{\max}{\lbrace}{\rbrace}{}{#1}
\DeclarePairedDelimiterXPP{\Min}[1]{\min}{\lbrace}{\rbrace}{}{#1}
\DeclareMathOperator{\tr}{Tr}
\DeclareMathOperator{\rk}{rank}
\DeclareMathOperator{\dom}{\mathbb{D}om}
\DeclareMathOperator{\esp}{\mathbf{E}}
\DeclareMathOperator{\prob}{\mathbf{P}}
\DeclareMathOperator{\var}{\mathbf{Var}}
\DeclareMathOperator{\law}{\mathbf{law}}
\DeclarePairedDelimiterXPP{\Prob}[1]{\prob}[]{}{#1}
\DeclarePairedDelimiterXPP{\Esp}[1]{\esp}[]{}{#1}
\DeclarePairedDelimiterXPP{\Var}[1]{\var}[]{}{#1}
\DeclarePairedDelimiterXPP{\Law}[1]{\law}[]{}{#1}
\author{Ronan HERRY}
\affil{%
  IRMAR, Université de Rennes 1
  \par
  \normalfont\href{mailto:ronan.herry@univ-rennes.fr}{\texttt{ronan.herry@univ-rennes.fr}}%
  \qquad ORCID: \href{https://orcid.org/0000-0001-6313-1372}{0000-0001-6313-1372}%
  }
\author{Dominique MALICET}
\affil{%
  LAMA, Université Gustave Eiffel
  \par
  \normalfont\href{mailto:dominique.malicet@univ-eiffel.fr}{\texttt{dominique.malicet@univ-eiffel.fr}}%
  \qquad ORCID: \href{https://orcid.org/0000-0003-2768-0125}{0000-0003-2768-0125}
  }
\author{Guillaume POLY}
\affil{%
  IRMAR, Université de Rennes 1
  \par
  \normalfont\href{mailto:guillaume.poly@univ-rennes.fr}{\texttt{guillaume.poly@univ-rennes.fr}}%
  }
\begin{document}
\title{Regularity of laws via Dirichlet forms -- Application to quadratic forms in independent and identically distributed random variables}
 \maketitle
 \vspace{-2em}
\begin{abstract}
  We study the regularity of the law of a quadratic form $Q(X,X)$, evaluated in a sequence $X = (X_{i})$ of independent and identically distributed random variables, when $X_{1}$ can be expressed as a sufficiently smooth function of a Gaussian field.
This setting encompasses a large class of important and frequently used distributions, such as, among others, Gaussian, Beta, for instance uniform, Gamma distributions, or else any polynomial transform of them.

Let us present an emblematic application.
Take $X = (X_{i})$ a sequence of independent and identically distributed centered random variables, with unit variance, following such distribution.
Consider also $(Q_{n})$ a sequence of quadratic forms, with associated symmetric Hilbert--Schmidt operators $(\mathsf{A}^{(n)})$.
Assume that $\tr\bracket*{ (\mathsf{A}^{(n)})^{2} } = 1/2$, $\mathsf{A}^{(n)}_{ii} =0$, and the spectral radius of $\mathsf{A}^{(n)}$ tends to $0$.
Then, $(Q_{n}(X))$ converges in a strong sense to the standard Gaussian distribution.
Namely, all derivatives of the densities, which are well-defined for $n$ sufficiently large, converge uniformly on $\mathbb{R}$ to the corresponding derivatives of the standard Gaussian density.

While classical methods, from Malliavin calculus or $\Gamma$-calculus, generally consist in bounding negative moments of the so-called \emph{carré du champ} operator $\Gamma(Q(X),Q(X))$, we provide a new paradigm through a second-order criterion involving the eigenvalues of a Hessian-type matrix related to $Q(X)$.
This Hessian is built by iterating twice a tailor-made gradient, the \emph{sharp operator} $\sharp$, obtained via a Gaussian representation of the carré du champ.
We believe that this method, recently developed by the authors in the current paper and \cite{HMP}, is of independent interest and could prove useful in other settings.
\end{abstract}

\section*{Introduction}

\subsection*{Main contributions}

\subsubsection*{Regularity of the law of random variables via Dirichlet form analysis}
We study the regularity of the law of a real-valued random variable $F$, that is a smooth function of a Gaussian field.
Thanks to the \emph{Malliavin calculus}, the underlying Gaussian structure, called the \emph{Wiener space}, comes equipped with a \emph{Dirichlet form}.
Here, we merely recall that the theory of Dirichlet form provides an ansatz of differential calculus at the level of the probability space, yielding, in particular, an integration by parts formalism.
Roughly speaking, our analysis builds upon the three following ingredients at the heart of the theory of Dirichlet forms:
\begin{enumerate}[(i)]
  \item the \emph{carré du champ}, an unbounded non-negative bilinear form $\Gamma$;
  \item the \emph{infinitesimal generator}, an unbounded linear operator $\mathsf{L}$;
  \item and an algebra of \emph{smooth} random variables $\mathbb{D}^{\infty}$, stable under $\mathsf{L}$ and $\Gamma$.
\end{enumerate}
Reminders on Dirichlet forms on the Wiener space are given in \cref{s:dirichlet-wiener}.
We nevertheless recall, in this introduction, the fundamental interplay between those three objects through the integration by parts formula:
\begin{equation*}
  \Esp*{ -X\mathsf{L}Y} = \Esp*{ \Gamma(X,Y) }, \qquad X,\, Y \in \mathbb{D}^{\infty}.
\end{equation*}
Our techniques allow us to study finely the regularity of the law of a quadratic form in smooth random variables.
To formulate our main result, let us define, for a symmetric Hilbert--Schmidt operator $\mathsf{A}$ with spectrum $(\lambda_{i})$, the \emph{spectral remainders}
\begin{equation*}
  \mathcal{R}_{q}(\mathsf{A}) \coloneq \sum_{i_{1} \ne \dots \ne i_{q}} \lambda^{2}_{i_{1}} \dots \lambda^{2}_{i_{q}}, \qquad q \in \mathbb{N},
\end{equation*}
We also recall the definition of the \emph{influence} of $\mathsf{A}$
\begin{equation*}
  \tau(\mathsf{A}) \coloneq \sup_{i \in \mathbb{N}}  \sum_{j \in \mathbb{N}} \paren*{a_{ij}^{(n)}}^{2}.
\end{equation*}

\begin{theoremintro}[{\cref{th:regularity-quadratic-form}}]\label{th:intro-regularity-quadratic-form}
  Let $(X_{i})$ be independent copies of a centred and unit variance random variable $X \in \mathbb{D}^{\infty}$.
  Assume, moreover that,
  \begin{equation}\label{eq:intro-small-ball-gamma}
    \Esp*{ \Gamma(X, X)^{-\theta} } < +\infty,
  \end{equation}
  for some $\theta > 0$.
  Let $\set*{\mathsf{A}^{(n)} = (a_{ij}^{(n)}) : n \in \mathbb{N} }$ be a sequence of symmetric Hilbert--Schmidt operators on $\ell^{2}(\mathbb{N})$ such that $a_{ii}^{(n)} = 0$ for $i \in \mathbb{N}$, and
  \begin{align*}
    & \liminf_{n \to \infty} \mathcal{R}_{q}(\mathsf{A}^{(n)}) > 0, \qquad q \in \mathbb{N};
  \\&\lim_{n \to \infty} \tau(\mathsf{A}^{(n)}) = 0.
  \end{align*}
  Then, for all $q \in \mathbb{N}$, there exists $N_{q}$ such that
  \begin{equation*}
    \Law*{\psh{X}{\mathsf{A}^{(n)} X}} \in \mathscr{C}^{q},\qquad n \geq N_{q}.
  \end{equation*}
  Moreover,
  \begin{equation*}
    \sup_{n \geq N_{q}} \norm*{\Law*{\psh{X}{\mathsf{A}^{(n)}X}}}_{\mathscr{C}^{q}} < +\infty.
  \end{equation*}
\end{theoremintro}

Let us illustrate \cref{th:intro-regularity-quadratic-form} by a concrete application.
To our knowledge, the following corollary is the first result in the literature providing $\mathscr{C}^{\infty}$-convergence for non-linear polynomial functionals of non-Gaussian random variables.
For the sake of simplicity, we state the result in the setting of uniform distributions.
As established in \cref{s:examples}, the uniform variables $(U_{i})$ in the Corollary below can be taken in a larger class of random variables, including, for instance, the Gaussian variables, the Beta variables, the Gamma variables, and the multi-linear polynomials thereof.
We say that a sequence of random variables $(X_{n})$ converges $\mathscr{C}^{\infty}$ to a random variable $X$ with smooth density $f$ provided for all $p \in \mathbb{N}$, there exists $N_{p}$ such that for $n \geq N_{p}$, $X_{n}$ admits a density $f_{n} \in \mathscr{C}^{p}$ and
\begin{equation*}
  \norm{f_{n}^{(p)} - f^{(p)}}_{\infty} \to 0, \qquad p \in \mathbb{N}.
\end{equation*}
In this case, we write $X_{n} \xrightarrow[n \to \infty]{\mathscr{C}^{\infty}} X$.
\begin{corollary*} 
  Take $\set*{\mathsf{A}^{(n)} = (a_{ij}^{(n)}) : n \in \mathbb{N} }$ a sequence of symmetric Hilbert--Schmidt operators on $\ell^{2}(\mathbb{N})$ such that $a_{ii}^{(n)} = 0$ for $i \in \mathbb{N}$, and
\begin{align*}
  & \tr\bracket*{ (\mathsf{A}^{(n)})^{2} } = 1,
\\& \rho(\mathsf{A}^{(n)}) \coloneq \sup_{\lambda \in \mathrm{spec}(\mathsf{A}^{(n)})} \abs{\lambda} \xrightarrow[n \to \infty]{} 0.
\end{align*}
Take also $(U_{i})$ a sequence of independent uniform variables on $[-1,1]$.
Then
\begin{equation*}
  \sum_{ij} a_{ij}^{(n)} U_{i} U_{j} \xrightarrow[n \to \infty]{\mathscr{C}^{\infty}} \mathcal{N}(0,2/3).
\end{equation*}
\end{corollary*}

\begin{proof}
  By a classical result of \cite{Rotar}, the above spectral conditions imply convergence in law to a Gaussian.
  Moreover, by \cref{th:examples:beta} the uniform distribution satisfies the assumption of \cref{th:intro-regularity-quadratic-form}.
  In particular, all the $\mathscr{C}^{q}$-norms of the cumulative distribution function $F_{n}$ are uniformly bounded (for $n$ large enough).
Since the Gaussian distribution has no atom, by a standard Dini-type argument, $(F_{n})$ converges uniformly.
By Landau--Kolmogorov inequality
\begin{equation*}
  \norm{F_{n}}_{\mathscr{C}^{q}} \leq \sqrt{2} \norm{F_{n}}_{\mathscr{C}^{q-1}}^{1/2} \norm{F_{n}}_{\mathscr{C}^{q+1}}^{1/2},
\end{equation*}
and, thus, proceeding by induction, $(F_{n})$ converges in all $\mathscr{C}^{q}$.
\end{proof}

\subsection*{Motivations and related works}

\subsubsection*{Smooth central limit theorem}
It is part of the probabilistic folklore, that if $(X_{i})$ is a sequence of centred, normalized, independent and identically distributed random variables whose common law has density in a Sobolev space, then the regularity of the law of the linear functionals
\begin{equation*}
  S_{n} \coloneq \frac{1}{n^{1/2}} \sum_{i=1}^{n} X_{i},
\end{equation*}
improves with $n$.
This fact can be seen at the level of the decay of the Fourier transform of $S_{n}$, by exploiting the linearity of $S_{n}$ and the associated convolution structure, see for instance \cite{LionsToscani}.
Our work leverages the theory of Malliavin calculus on the Wiener space to obtain a non-linear equivalent of this regularization phenomenon.

Improving the type of convergence in the central limit theorem for non-linear functionals of random fields is a longstanding and intensively studied problem in probability theory.
The lack of linearity rules out methods based on convolution structure and its regularization properties.
Hence, establishing smooth limit theorems, in this non-linear setting, turns out to be a much harder task.

When the $X_{i}$'s are independent Gaussian, let us mention, among other, the works \cite{NourdinPolyTotalVariation,HuLuNualart,BallyCarmellinoWiener} that establish that central convergence in law can be improved to convergence in total variation; while \cite{NPY} derives similar result for convergence in entropy.
In our companion paper \cite{HMP}, we derive that, on Wiener chaoses, central convergence can actually always be improved to $\mathscr{C}^{\infty}$ convergence of the densities.

For non-Gaussian random variables, less results are available.
See however, \cite{BallyCarmellinoInvariance,BallyCaramellinoPoly} where convergence in total variation is considered when the common law of uderlyings $X_{i}$'s admits an absolutely continuous part.
As in the present article, all the aforementioned results are derived through ideas pertaining to Malliavin calculus and Dirichlet forms.

We stress out that we do not need to assume asymptotic normality, or even convergence in law, of the random variables under consideration, as our conditions in \cref{th:intro-regularity-quadratic-form} hold in a wider context.
In some cases, it is however, possible to recover those conditions from asymptotic normality.
See for instance \cref{s:normal-convergence} for concrete examples.

\subsubsection*{Regularity of the law via positivity of the carré du champ}

In his seminal work \citeauthor{Malliavin} \cite{Malliavin} lays the foundation of an infinite dimensional differential calculus at the level of the Wiener space to study the regularity of the density of a solution of a SDE, thus giving a new proof of a celebrated theorem by \citeauthor{HormanderHypoelliptic} \cite{HormanderHypoelliptic}.
\cite{Malliavin} establishes that some form of positivity of the Malliavin gradient implies some form of regularity at the level of the law.
Since then, the existence of negative moments of the Malliavin gradient plays a prominent role in many works related to Gaussian analysis (see, for instance, \cite{CassFriz,HNTXBreuerMAjor,NourdinNualartFisher,AruGMC,Schoenbauer} for recent works).
In the literature, the problem of existence of negative moments is, most of the time, tackled on a case by case basis exploiting the specificity of the model under consideration, or taken as an assumption.
To the best of our knowledge, \cite{BogachevKosovZelenov} is the only work developing a comprehensive theory of regularity for general Gaussian polynomials.
They obtain results regarding Besov regularity, and their work is also based on the study of the Malliavin derivative.
In this work, we introduce novel ideas to systematically study the existence of negative moments of the Malliavin derivative, allowing us to derive $\mathscr{C}^{\infty}$ regularity of the density beyond the case of Gaussian polynomials.

\subsection*{Outline of the proof and of our construction}

\subsubsection*{A Gaussian representation of the carré du champ}

As anticipated, we derive explicit controls of the negative moments of $\Gamma(F,F)$, for some $F = F(X_{1}, X_{2}, \dots) \in \mathbb{D}^{\infty}$, where $(X_{i})$ are independent copies of $X \in \mathbb{D}^{\infty}$ satisfying $\Esp*{ \Gamma(X,X)^{-\theta}} < \infty$ for some $\theta > 0$.
Note that each $X_{i}$ is a function of variables $(Y_{i,k})_{k}$ such that $(Y_{i,k})_{i,k}$ is an array of independent Gaussian variables.
We define \cref{s:bouleau-derivative} the \emph{Bouleau derivative}:
\begin{equation*}
  \sharp_{G} F \coloneq \sum_{i \in \mathbb{N}} (\partial_{x_{i}}F) (\mathsf{D} X_{i} \cdot \vec{G}_{i}),
\end{equation*}
where $G = (G_{i,k})_{i,k}$ is an array of independent standard Gaussian variables independent of the underlying Wiener space generated by the $(Y_{ik})_{ik}$, $\vec{G}_{i} \coloneq (G_{i,k})_{k}$, and $\mathsf{D} X_{i} \coloneq (\partial_{y_{k}} X_{i})_{k}$.
By construction, $\sharp_{G}F$ has the same law as $\Gamma(F,F)^{1/2}N$ where $N$ is an independent standard Gaussian variable.
In particular, we have the following \emph{Fourier--Laplace identity}:
\begin{equation*}
  \Esp*{ \Exp*{\mathrm{i} t \sharp_{G}F} } = \Esp*{ \Exp*{-\frac{t^{2}}{2} \Gamma(F,F)} }.
\end{equation*}
From this identity, we see that existence of negative moments for $\Gamma(F,F)$ is equivalent to some sufficiently fast Fourier decay of $\sharp_{G}F$, which in turn is equivalent to some regularity of the law of $\sharp_{G}F$.
Combining all of this, we obtain the following intermediary result.
\begin{theoremintro}[{\cref{th:sobolev-regularity-derivative}}]\label{th:intro-sobolev-regularity-derivative}
  Assume that $\sharp_{G}F$ has a smooth law, then so does $F$.
\end{theoremintro}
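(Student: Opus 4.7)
The plan is to convert the smoothness hypothesis on $\sharp_{G}F$ into existence of all negative moments of $\Gamma(F,F)$ through the Fourier--Laplace equivalence stated just above the theorem, and then invoke the standard integration-by-parts machinery from Dirichlet-form calculus to deduce smoothness of the law of $F$.

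First, smoothness of the law of $\sharp_{G}F$ means that its density lies in every Sobolev space $W^{k,1}(\mathbb{R})$, so its characteristic function decays faster than every polynomial at infinity. The Fourier--Laplace equivalence
\[
  \Esp*{\mathrm{e}^{\mathrm{i} t \sharp_{G}F}} = \Esp*{\mathrm{e}^{-t^{2}\Gamma(F,F)/2}}
\]
then forces $s \mapsto \Esp*{\mathrm{e}^{-s\Gamma(F,F)}}$ to decay faster than every polynomial as $s \to \infty$. Plugging this into the elementary identity $a^{-p} = c_{p}\int_{0}^{\infty} s^{p-1}\mathrm{e}^{-sa}\,\mathrm{d}s$, valid for $a,p>0$ with some constant $c_{p}>0$, and applying Fubini yields $\Gamma(F,F)^{-p} \in L^{1}$ for every $p > 0$: integrability near $s = 0$ is harmless since the integrand is bounded by one there, and integrability at infinity comes precisely from the super-polynomial decay just obtained.

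Once every negative moment of $\Gamma(F,F)$ is under control, smoothness of the law of $F$ follows from an iterated Dirichlet-form integration by parts that mimics Malliavin's classical argument in the Wiener setting. For $\varphi \in \mathscr{C}^{\infty}_{c}(\mathbb{R})$, starting from $\Esp*{\Gamma(\varphi(F),F)} = \Esp*{\varphi'(F)\Gamma(F,F)}$ and using the generator $L$ dual to $\Gamma$, one rewrites this as $\Esp*{\varphi'(F)} = \Esp*{\varphi(F)H_{1}(F)}$, where $H_{1}(F)$ is an explicit expression in $F$, $LF$ and $\Gamma(F,F)^{-1}$. Iterating this identity $k$ times yields for every $k \in \mathbb{N}$ an integration-by-parts formula $\Esp*{\varphi^{(k)}(F)} = \Esp*{\varphi(F)H_{k}(F)}$ with $H_{k}(F) \in L^{p}$ for every $p < \infty$, and by duality the law of $F$ has a density in every Sobolev space.

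The main obstacle lies in the iteration: one must verify that the random variables produced at each step remain in the smooth algebra $\mathbb{D}^{\infty}$, so that $\Gamma$ and $L$ can indeed be applied to them, and that the successive weights $H_{k}(F)$ stay in every $L^{p}$. Since $F = f(X_{1},\dots,X_{n})$ with $f$ smooth and each $X_{i} \in \mathbb{D}^{\infty}$, the chain rule for $\Gamma$ combined with the Dirichlet-independence assumption guarantees $F \in \mathbb{D}^{\infty}$; the $L^{p}$ control of $H_{k}(F)$ then reduces, via Hölder's inequality and polynomial bounds on iterated Malliavin-type derivatives of $F$, to the negative-moment estimates on $\Gamma(F,F)$ secured in the first step.
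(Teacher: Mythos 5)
Your proposal is correct and follows essentially the same route as the paper: both convert the Fourier--Laplace equivalence $\Esp*{\mathrm{e}^{\mathrm{i}t\sharp_{G}F}}=\Esp*{\mathrm{e}^{-t^{2}\Gamma(F,F)/2}}$ plus the super-polynomial Fourier decay implied by smoothness of the law of $\sharp_{G}F$ into finiteness of every negative moment $\Esp*{\Gamma(F,F)^{-p}}$, and then invoke the iterated Dirichlet-form integration-by-parts argument (\cref{th:integration-by-parts-gamma}/\cref{th:sobolev-gamma-l-minus-p}) to conclude smoothness of the law of $F$. The only cosmetic difference is that you obtain the negative-moment representation directly from the Gamma-function identity $a^{-p}=c_{p}\int_{0}^{\infty}s^{p-1}\mathrm{e}^{-sa}\,\mathrm{d}s$ together with Fubini, whereas the paper reaches the same integral via Markov's inequality applied to the small-ball probability $\Prob*{\Gamma(F,F)<1/\xi}$ followed by the layer-cake formula; the two routes yield the same bound up to a multiplicative constant.
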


\begin{remark}
  We have used the Bouleau derivative and a similar idea in \cite{HMP} to study regularization on Wiener chaoses.
\end{remark}

\subsubsection*{Comparison of the Malliavin derivative and the Bouleau derivative}
In the theory of Dirichlet forms, and in particular in Gaussian analysis, it is rather standard to represent the carré du champ $\Gamma$ through a so-called \emph{Malliavin derivative}.
This consists in a separable Hilbert space $\mathbb{G}$ together with a map $\nabla \colon L^{2} \to L^{2}(\mathbb{G})$ such that
\begin{equation*}
  \norm{\nabla F}_{\mathbb{G}}^{2} = \Gamma(F,F).
\end{equation*}
All separable Hilbert spaces being isomorphic little care is usually given to the choice of $\mathbb{G}$.
The Bouleau derivative $\sharp_{G}F$ is a Malliavin derivative, where we take $\mathbb{G}$ to specifically be a Gaussian space.
By doing so, we introduce gaussianity in a \emph{a priori} non-Gaussian world, and we can now leverage the rich structure of Gaussian analysis.
This paradigmatic shift in the representation of the derivative is reminiscent of proof of the isometric embeddality of $\ell^{2}(\mathbb{N})$ into $L^{q}(0,1)$, where $\ell^{2}(\mathbb{N})$ is explicitly sent into the Gaussian space.
Namely, we identify every $v = (v_{i}) \in \ell^{2}(\mathbb{N})$ with a Gaussian variable $G(v) \coloneq \sum_{i} v_{i} N_{i}$ with $(N_{i})$ a standard Gaussian sequence.
Then, $G(v)$ has finite $q$-moment for all $q \in \mathbb{N}$, an information that we could have gained, have we stayed in $\ell^{2}(\mathbb{N})$.
The idea of taking derivatives in the direction of Gaussian variables can be traced back to a slightly different construction of \citeauthor{BouleauError} \cite[Chap.\ V \S 2]{BouleauError} to study different problems.

\subsubsection*{Introducing a non-linearity by iterating the derivative}
The object $\sharp_{G}F$ is still too close from $F$, and we were not able to derive meaningful estimates by working directly at the level of $\sharp_{G}F$.
This motivates the introduction of the second derivative $\sharp_{H} \sharp_{G}F$.
For another array $H = (H_{j,l})_{jl}$ of independent standard Gaussian variables, we define \emph{iterated Bouleau derivative}
\begin{equation*}
 \sharp_{H} \sharp_{G} F \coloneq \sum_{i,j \in \mathbb{N}} (\partial_{x_{i}} \partial_{x_{j}}F) (\mathsf{D} X_{i} \cdot \vec{G}_{i}) (\mathsf{D} X_{j} \cdot \vec{H}_{j}) + \sum_{i \in \mathbb{N}} (\partial_{x_{i}} F) \vec{H}_{i} \cdot (\mathsf{D}^{2}X_{i} \vec{G}_{i}),
\end{equation*}
where $\mathsf{D}^{2}X_{i} \coloneq (\partial_{y_{k}}\partial_{y_{l}}X_{i})_{kl}$.
This definition simply corresponds to applying the Bouleau derivative defined above to the random variable $\sharp_{G}F$ where in this case we see $G$ as being frozen.
The first sum on the right-hand side has the same law as $\psh{U}{(\Gamma^{1/2}(\nabla^{2}F)\Gamma^{1/2}) V}$, where $(\nabla^{2}F)_{ij} = (\partial_{x_{i}}\partial_{x_{j}}F)$, $\Gamma \coloneq \mathrm{diag}(\Gamma(X_{1}, X_{1}), \Gamma(X_{2}, X_{2}), \dots)$, and $U = (U_{i})$ and $V = (V_{i})$ are two independent standard Gaussian vectors.
Similarly to T.\ Royen's proof of the Gaussian correlation inequality \cite{Royen}, we exploit properties of square of Gaussian variables rather than of Gaussian variables directly.
In our case, this allows us to control, in \cref{th:regularity-negative-moments-spectral-remainder}, the regularity of $\sharp_{H} \sharp_{G} F$ in terms of a quantity related to $\Gamma^{1/2} (\nabla^{2}F) \Gamma^{1/2}$.

Iterating the argument leading to \cref{th:intro-sobolev-regularity-derivative}, we obtain in \cref{th:sobolev-regularity-iterated-gradient}, that whenever the law of $\sharp_{H} \sharp_{G} F$ is smooth then the law of $F$ is also smooth.
Concretely our reasoning rely on the following chain of implications
\begin{equation*}
  \begin{split}
  & \sharp_{H} \sharp_{G} F \ \text{smooth} \overset{\text{Fourier--Laplace}}{\Longrightarrow} \Esp*{\Gamma(\sharp_{G}F, \sharp_{G}F)^{-q}} < \infty, \forall q \in \mathbb{N} \overset{\text{Malliavin calculus}}{\Longrightarrow} \sharp_{G}F \ \text{smooth} 
\\& \overset{\text{Fourier--Laplace}}{\Longrightarrow} \Esp*{\Gamma(F,F)^{-q}} < \infty, \forall q \in \mathbb{N} \overset{\text{Malliavin calculus}}{\Longrightarrow} F \ \text{smooth}.
  \end{split}
\end{equation*}

\subsubsection*{Controlling the iterated Bouleau derivative of a quadratic form}
When considering a quadratic form $F \coloneq \psh{X}{\mathsf{A}X}$, with $\mathsf{A}$ a deterministic symmetric Hilbert--Schmidt operators, the term to control in the iterated Bouleau derivative $\sharp_{H} \sharp_{G} F$ is of the special form: $\Gamma^{1/2} \nabla^{2}F \Gamma^{1/2} = \Gamma^{1/2} \mathsf{A} \Gamma^{1/2}$, where the randomness only comes from the diagonal matrix $\Gamma$.
Owing to this particular form, we can explicitly control this quantity under some spectral assumptions on $\mathsf{A}$, which gives \cref{th:intro-regularity-quadratic-form}.
As a main technical tool, we use two results of independent interest:
\begin{itemize}[wide]
  \item a result showing that existence of a negative moment is preserved by taking multi-linear polynomials (\cref{th:small-ball-l2});
  \item a splitting argument that allows us to improve the positivity of some quantities under spectral conditions (\cref{th:small-ball-improved}).
\end{itemize}
These results complement, and are actually based, on existing results regarding invariance and anti-concentration for multi-linear polynomials \cite{CarberyWright,MDOInvariance}.

\subsubsection*{Embedding common laws in our setting}
To apply our result, we verify that many common laws fit in our setting.
More precisely, in \cref{th:examples:general}, we show that under polynomial growth conditions on the derivatives of the quantile function of a law $\mu$, there exists $X \in \mathbb{D}^{\infty}$ with law $\mu$.
The condition $\Esp*{ \Gamma(X,X)^{-\theta}} < \infty$, is satisfied as soon as $\mu$ admits a density in $L^{1+\varepsilon}$ for some $\varepsilon > 0$.
In particular, we verify in \cref{th:examples:beta,th:examples:gamma} that all the Beta and Gamma distributions can be realized on the Wiener space in a way that is suitable to apply our result.
Even though the proofs of the above theorems are mostly computational, the authors were not aware that the Gaussian setting was rich enough to support such a variety of laws, and we find this phenomenon of independent interest.

\subsubsection*{Acknowledgements}
We heartily thank the anonymous referee for their thorough review and detailed comments that have considerably helped us to improve the paper.

\subsubsection*{Funding}
R.H.~gratefully acknowledges funding from Centre Henri Lebesgue (ANR-11-LABX-0020-01) through a research fellowship in the framework of the France 2030 program.
This work was supported by the ANR Grant UNIRANDOM (ANR-17-CE40-0008).
An important part of this work was done while D.M.~was visiting the IRMAR in Rennes thanks to a CNRS \emph{délégation}.

\tableofcontents%

\setcounter{secnumdepth}{5}

\section{Reminders and notations}

\subsection{Linear algebra}\label{s:linear-algebra}

\subsubsection{Symmetric Hilbert--Schmidt operators}
We write $\ell^{2}(\mathbb{N})$ for the usual Hilbert space of square-integrable $\mathbb{N}$-indexed sequences $x = (x_{i} : i \in \mathbb{N})$.
A \emph{symmetric Hilbert--Schmidt operator} $\mathsf{A}$ acting on $\ell^{2}(\mathbb{N})$ can be identified with a bi-sequence $\mathsf{A} \simeq (a_{ij} : i,\, j \in \mathbb{N})$ satisfying:
\begin{description}
  \item[symmetry] $a_{ij} = a_{ji}$ for all $i$ and $j \in \mathbb{N}$;
  \item[square-integrability] $\tr \mathsf{A}^{2} \coloneq \norm{a}_{\ell^{2}}^{2} \coloneq \sum_{i,j} a_{ij}^{2} < \infty$;
\end{description}
We sometimes also assume that $\mathsf{A}$ has a \emph{vanishing diagonal}, that is $a_{ii} = 0$ for all $i \in \mathbb{N}$.

\paragraph{Eigenvalues and spectral quantities}
The operator $\mathsf{A}$ is diagonalizable in an orthonormal basis with real eigenvalues $(\lambda_{i} : i \in \mathbb{N})$.
We always assume that they are decreasingly ordered by the absolute values of their eigenvalues: $\abs{\lambda_{1}} \geq \abs{\lambda_{2}} \geq \dots$.
The square-integrability can be rephrased in terms of the eigenvalues
\begin{equation*}
  \tr \mathsf{A}^{2} = \sum_{i \in \mathbb{N}} \lambda_{i}^{2} < \infty.
\end{equation*}
We consider various quantities associated with $\mathsf{A}$:
\begin{itemize}
  \item the \emph{spectral radius} 
\begin{equation*}
  \rho(\mathsf{A}) \coloneq \sup_{i \in \mathbb{N}} \abs{\lambda_{i}};
\end{equation*}
\item the \emph{spectral remainders}
\begin{equation*}
  \mathcal{R}_{q}(\mathsf{A}) \coloneq \sum_{i_{1} \ne \dots \ne i_{q}} \lambda_{i_{1}}^{2} \dots \lambda_{i_{q}}^{2}, \qquad q \in \mathbb{N};
\end{equation*}
\item the \emph{partial influences}
\begin{equation*}
  \tau_{i}(\mathsf{A}) \coloneq \sum_{j \in \mathbb{N}} a_{ij}^{2}, \qquad i \in \mathbb{N};
\end{equation*}
\item the \emph{maximal influence}
\begin{equation*}
  \tau(\mathsf{A}) \coloneq \sup_{i \in \mathbb{N}} \tau_{i}(\mathsf{A}) = \sup_{i \in \mathbb{N}} \sum_{j \in \mathbb{N}} a_{ij}^{2}.
\end{equation*}
\end{itemize}

We always have that
\begin{equation*}
  \tau_{i}(\mathsf{A}) \leq \tau(\mathsf{A}) \leq \tr \mathsf{A}^{2}.
\end{equation*}
For $I$ and $J \subset \mathbb{N}$, we also write
\begin{equation*}
  \mathsf{A}(I,J) \coloneq (a_{ij} : (i,j) \in I \times J),
\end{equation*}
for the extracted operator.
As anticipated, our \cref{th:regularity-quadratic-form} deals with operator whose spectral remainders are positive and influence is small.
The following result shows that it is actually sufficient to control the spectral radius.

\begin{lemma}\label{th:spectral-radius-implies-influence}
  Let $\mathsf{A}$ be a symmetric Hilbert--Schmidt operator with $\tr \mathsf{A}^{2} = 1$.
  Then,
  \begin{align}
    & \label{eq:bound-spectral-remainder-spectral-radius} \mathcal{R}_{q}(\mathsf{A}) \geq \prod_{k=1}^{q-1} (1- k \rho(\mathsf{A})^{2})
  \\& \label{eq:bound-influence-spectral-radius} \tau(\mathsf{A}) \leq \rho(\mathsf{A})^{2}.
  \end{align}
\end{lemma}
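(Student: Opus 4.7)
The plan is to treat the two inequalities separately, handling \cref{eq:bound-influence-spectral-radius} by a direct operator-norm argument and \cref{eq:bound-spectral-remainder-spectral-radius} by induction on $q$ via a telescoping recursion.

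For \cref{eq:bound-influence-spectral-radius}, I would observe that, since $\mathsf{A}$ is symmetric,
\begin{equation*}
  \tau_{i}(\mathsf{A}) = \sum_{j} a_{ij}^{2} = \sum_{j} a_{ij}a_{ji} = (\mathsf{A}^{2})_{ii} = \psh{e_{i}}{\mathsf{A}^{2}e_{i}}.
\end{equation*}
The operator $\mathsf{A}^{2}$ is positive semi-definite with spectrum $(\lambda_{i}^{2})$, so its operator norm equals $\rho(\mathsf{A})^{2}$. Hence $(\mathsf{A}^{2})_{ii} \leq \rho(\mathsf{A})^{2}$ uniformly in $i$, yielding $\tau(\mathsf{A}) \leq \rho(\mathsf{A})^{2}$.

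For \cref{eq:bound-spectral-remainder-spectral-radius}, the key is the recursion
\begin{equation*}
  \mathcal{R}_{q+1}(\mathsf{A}) \geq \paren*{1 - q \rho(\mathsf{A})^{2}} \mathcal{R}_{q}(\mathsf{A}), \qquad q \in \mathbb{N}.
\end{equation*}
To prove it, multiply $\mathcal{R}_{q}(\mathsf{A})$ by $\tr \mathsf{A}^{2} = \sum_{j}\lambda_{j}^{2} = 1$ and split the resulting sum according to whether the new index $j$ is distinct from $i_{1},\dots,i_{q}$ or coincides with one of them. Using the symmetry of the expression in $i_{1},\dots,i_{q}$, the coincidence terms combine into
\begin{equation*}
  q \sum_{i_{1} \ne \dots \ne i_{q}} \lambda_{i_{1}}^{2} \dots \lambda_{i_{q-1}}^{2} \lambda_{i_{q}}^{4},
\end{equation*}
which is at most $q \rho(\mathsf{A})^{2} \mathcal{R}_{q}(\mathsf{A})$ since $\lambda_{i_{q}}^{4} \leq \rho(\mathsf{A})^{2} \lambda_{i_{q}}^{2}$. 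The remaining diagonal-free part is exactly $\mathcal{R}_{q+1}(\mathsf{A})$, and the recursion follows.

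Finally, since $\mathcal{R}_{1}(\mathsf{A}) = \tr\mathsf{A}^{2} = 1$, iterating the recursion gives
\begin{equation*}
  \mathcal{R}_{q}(\mathsf{A}) \geq \prod_{k=1}^{q-1}\paren*{1 - k\rho(\mathsf{A})^{2}},
\end{equation*}
as desired. The only mild subtlety is correctly accounting for the multiplicity $q$ in the coincidence terms; once this combinatorial factor is pinned down, the rest is a one-line induction.
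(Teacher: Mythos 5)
Your proof is correct and follows essentially the same approach as the paper's: the recursion $\mathcal{R}_{q+1}(\mathsf{A}) \geq (1 - q\rho(\mathsf{A})^{2})\mathcal{R}_{q}(\mathsf{A})$ is the same one the paper derives (there by directly lower-bounding the inner sum $\sum_{i_{q}\notin\{i_{1},\dots,i_{q-1}\}}\lambda_{i_{q}}^{2} \geq 1-(q-1)\rho(\mathsf{A})^{2}$, here by subtracting the coincidence terms; these are two ways of writing the same computation), and the bound $\tau_{i}(\mathsf{A}) = \norm{\mathsf{A}e_{i}}^{2} \leq \rho(\mathsf{A})^{2}$ is what the paper's Cauchy--Schwarz argument reduces to once one uses $\mathsf{A}e_{i}=a_{i}$.
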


\begin{proof}
  By definition,
  \begin{equation*}
    \mathcal{R}_{q}(\mathsf{A}) = \sum_{i_{1} \ne \dots \ne i_{q-1}} \lambda_{i_{1}}^{2} \dots \lambda_{i_{q-1}}^{2} \paren*{ \sum_{i_{q} \not\in \set{i_{1},\dots, i_{q-1}}} \lambda_{i_{q}}^{2} } \geq \mathcal{R}_{q-1}(\mathsf{A}) \sum_{i \geq q} \lambda_{i}^{2} \geq \mathcal{R}_{q-1}(\mathsf{A}) (1-(q-1)\rho(\mathsf{A})^{2}).
  \end{equation*}
  The first inequality is true since the $(\lambda_{i}^{2})$ is non-increasing. 
  \cref{eq:bound-spectral-remainder-spectral-radius} follows by an immediate induction.
Take $i \in \mathbb{N}$.
For \cref{eq:bound-influence-spectral-radius}, consider $e_{i}$ the $i$-th vector of the canonical basis.
Then,
\begin{equation*}
  \tau_{i}(\mathsf{A}) = \norm{\mathsf{A} e_{i}}^{2} \leq \rho(\mathsf{A})^{2}.
\end{equation*}
Taking the supremum over $i \in \mathbb{N}$ completes the proof.
\end{proof}

\paragraph{Properties of the spectral remainders}
Provided $\rk \mathsf{A} < q$, then $\mathcal{R}_{q}(\mathsf{A}) = 0$.
Actually, the spectral remainder $\mathcal{R}_{q}(\mathsf{A})$ measures the distance of $\mathsf{A}$ to the operators of rank $q$.
See \cite[Lem.\ 3 \& 4]{HMP} for a precise statement.

The following representation of the spectral remainder will come handy.
\begin{lemma}[{\cite[Thm.\ 6]{CauchyBinet}}]\label{th:cauchy-binet}
  For a symmetric Hilbert--Schmidt operator $\mathsf{A}$:
  \begin{equation*}
    \mathcal{R}_{q}(\mathsf{A}) = \sum_{\substack{I,\, J \subset \mathbb{N}\\\abs{I} = \abs{J} = q}} \bracket*{\det \mathsf{A}(I,J)}^{2}.
  \end{equation*}
\end{lemma}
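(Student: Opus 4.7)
The plan is to derive the identity as a direct consequence of the classical Cauchy--Binet formula applied to the factorization $\mathsf{A}^{2} = \mathsf{A} \cdot \mathsf{A}$. As a preliminary step I would reduce to the finite-dimensional case by truncating $\mathsf{A}$ to $\mathsf{A}_{n} \coloneq P_{n} \mathsf{A} P_{n}$, where $P_{n}$ is the orthogonal projection onto the span of the first $n$ basis vectors. Both sides of the claimed identity pass to the limit as $n \to \infty$: the right-hand side by monotone convergence, since every summand is non-negative, and the left-hand side by continuity of the spectrum under Hilbert--Schmidt convergence, which controls $\mathcal{R}_{q}$ through a polynomial expression in $\tr \mathsf{A}^{2k}$, $k \leq q$.

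In the finite setting, the key step is the Cauchy--Binet formula: for any subset $I \subset \mathbb{N}$ with $\abs{I} = q$,
\begin{equation*}
  \det \mathsf{A}^{2}(I, I) = \sum_{\abs{K} = q} \det \mathsf{A}(I, K) \cdot \det \mathsf{A}(K, I) = \sum_{\abs{K} = q} \bracket*{\det \mathsf{A}(I, K)}^{2},
\end{equation*}
where the second equality uses the symmetry $\mathsf{A}(K, I) = \mathsf{A}(I, K)^{\top}$. Summing over all $\abs{I} = q$ produces exactly the right-hand side of the claim. The left-hand side $\sum_{\abs{I} = q} \det \mathsf{A}^{2}(I, I)$ is the sum of principal $q \times q$ minors of $\mathsf{A}^{2}$, which by the standard identification of traces of exterior powers with elementary symmetric polynomials equals $e_{q}(\lambda_{1}^{2}, \lambda_{2}^{2}, \dots)$; under the counting convention of the paper this matches $\mathcal{R}_{q}(\mathsf{A})$ on the nose.

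The only non-trivial point is the infinite-dimensional aspect. Both sides are manifestly well-defined convergent series for a Hilbert--Schmidt operator, but the identification of $\sum_{\abs{I}=q} \det \mathsf{A}^{2}(I, I)$ with an elementary symmetric function of the spectrum relies on the spectral theorem and is imported from finite dimensions via the truncation above. An alternative, more self-contained route expands each $\det \mathsf{A}(I, J)$ via Cauchy--Binet in the spectral decomposition $\mathsf{A} = \sum_{k} \lambda_{k} v_{k} v_{k}^{\top}$; the sum $\sum_{I, J} \bracket*{\det \mathsf{A}(I, J)}^{2}$ then collapses to $\sum_{\abs{L}=q} \prod_{k \in L} \lambda_{k}^{2}$ thanks to the Parseval-type relation $\sum_{\abs{I}=q} \det V(I, L) \, \det V(I, L') = \delta_{L L'}$, which itself follows from applying Cauchy--Binet to the identity $V^{\top} V = I$ expressing orthonormality of the eigenvectors.
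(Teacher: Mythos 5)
The paper does not prove this lemma; it cites it to an external reference, so there is no in-paper argument to compare against. That said, your derivation is correct and fills the gap cleanly. The reduction to finite dimensions, the application of the classical Cauchy--Binet identity to the factorization $\mathsf{A}^{2} = \mathsf{A}\cdot\mathsf{A}$, the use of symmetry to turn $\det \mathsf{A}(I,K)\det \mathsf{A}(K,I)$ into a square, and the identification of $\sum_{\abs{I}=q}\det \mathsf{A}^{2}(I,I)$ with $e_{q}$ of the spectrum of $\mathsf{A}^{2}$ are all standard and carried out correctly. The alternative, self-contained route via the spectral decomposition $\mathsf{A} = \sum_{k}\lambda_{k}v_{k}v_{k}^{\top}$ and the Parseval-type relation $\sum_{I}\det V(I,L)\det V(I,L') = \delta_{LL'}$ (itself Cauchy--Binet applied to $V^{\top}V = I$) is also correct, and in fact the cleaner argument since it avoids any appeal to spectral continuity under truncation.

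One small point worth making explicit rather than hiding behind the phrase \enquote{under the counting convention of the paper}: the notation $\sum_{i_{1}\ne\dots\ne i_{q}}\lambda_{i_{1}}^{2}\dots\lambda_{i_{q}}^{2}$ would, read literally as a sum over ordered tuples of distinct indices, produce $q!\,e_{q}(\lambda^{2})$, which would ruin the identity. That it must be read as a sum over \emph{unordered} $q$-subsets (so $\mathcal{R}_{q}(\mathsf{A}) = e_{q}(\lambda_{1}^{2},\lambda_{2}^{2},\dots)$) is forced by the paper's own use of $\mathcal{R}_{q}$ in the proof of the Gaussian quadratic form estimate, where the expansion $\prod_{k}(1 + 4\xi^{2}\lambda_{k}^{2}) = 1 + \sum_{p}4^{p}\xi^{2p}\mathcal{R}_{p}(\mathsf{A})$ leaves no room for a factorial. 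Stating this once removes the only potential ambiguity in your argument.

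A minor technical remark on the truncation route: you justify convergence of the left-hand side by Newton's identities, writing $e_{q}(\lambda^{2})$ as a polynomial in $\tr\mathsf{A}^{2},\dots,\tr\mathsf{A}^{2q}$ and invoking trace continuity. This works, but note that you need $\tr(P_{n}\mathsf{A}P_{n})^{2k}\to\tr\mathsf{A}^{2k}$ for each $k\le q$, which holds because $\mathsf{A}^{2}$ is trace class and $P_{n}\mathsf{A}P_{n}\to\mathsf{A}$ in Hilbert--Schmidt norm; the right-hand side passes to the limit by monotone convergence exactly as you say, since restricting to $I,J\subset\{1,\dots,n\}$ only drops non-negative terms. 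Both limits exist and the finite-dimensional identity then yields the infinite-dimensional one.
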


\subsubsection{Determinantal operators associated with a Hilbert--Schmidt operator}\label{s:linear-algebra-ell-1}

Let $q \in \mathbb{N}$ and $\mathscr{P}_{q}(\mathbb{N}) \coloneq \set*{ I \subset \mathbb{N} : \abs{I} = q }$.
We identify $\mathscr{P}_{1}(\mathbb{N}) = \mathbb{N}$.
In view of the Cauchy--Binet formula, it is natural to consider the operator $\mathsf{B} \coloneq (b_{IJ} : I,\, J \in \mathscr{P}_{q}(\mathbb{N}))$ with non-negative coefficients
\begin{equation*}
  b_{IJ} \coloneq \bracket*{\det \mathsf{A}(I,J)}^{2}.
\end{equation*}
We treat $\mathsf{B}$ as a genuine operator, however due to the non-negative nature of its entries, we consider $\ell^{1}$-quantities rather than $\ell^{2}$:
\begin{itemize}
  \item  the \emph{total mass}:
    \begin{equation*}
      \sigma(\mathsf{B}) \coloneq \norm{b}_{\ell^{1}} \coloneq \sum_{I,J} b_{IJ} = \mathcal{R}_{q}(\mathsf{A});
    \end{equation*}
  \item the \emph{$\ell^{1}$-partial influences} of the index $i$:
    \begin{equation*}
      \upsilon_{i}(\mathsf{B}) \coloneq \sum_{I,\,J \in \mathscr{P}_{q}(\mathbb{N})} 1_{i \in I \cup J} b_{IJ};
    \end{equation*}
  \item the \emph{$\ell^{1}$-maximal influences}:
    \begin{equation*}
      \upsilon(\mathsf{B}) \coloneq \sup_{i} \upsilon_{i}(\mathsf{B}).
    \end{equation*}
\end{itemize}

\begin{lemma}\label{th:influence-det}
  With the above notations, assume that $\tr \mathsf{A}^{2} = 1$, then
  \begin{equation*}
    \upsilon(\mathsf{B}) \leq 2 q \tau(\mathsf{A}).
  \end{equation*}
\end{lemma}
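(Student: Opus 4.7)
The plan is to combine the symmetry of $\mathsf{B}$ with two applications of the Cauchy--Binet formula, bracketed by a Hadamard-type estimate on Gram determinants. Since $\mathsf{A}$ is symmetric, $\mathsf{A}(I,J) = \mathsf{A}(J,I)^{\top}$ and hence $b_{IJ} = b_{JI}$. Using $1_{i \in I \cup J} \leq 1_{i \in I} + 1_{i \in J}$, this symmetry immediately gives
\begin{equation*}
\upsilon_{i}(\mathsf{B}) \leq 2 \sum_{I \ni i} \sum_{J \in \mathscr{P}_{q}(\mathbb{N})} b_{IJ}.
\end{equation*}

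Viewing $\mathsf{A}(I, \cdot)$ as a $q \times \mathbb{N}$ rectangular matrix, the classical Cauchy--Binet formula identifies
\begin{equation*}
\sum_{J} \bracket[\big]{\det \mathsf{A}(I,J)}^{2} = \det G_{I}, \qquad G_{I} \coloneq \mathsf{A}(I, \cdot) \mathsf{A}(I, \cdot)^{\top}.
\end{equation*}
The Gram matrix $G_{I}$ is positive semi-definite of size $q$ with diagonal entries $(G_{I})_{kk} = \tau_{k}(\mathsf{A})$. Singling out the coordinate $i \in I$ and applying Fischer's inequality (the block version of Hadamard) yields $\det G_{I} \leq \tau_{i}(\mathsf{A}) \det G_{I \setminus \{i\}}$.

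Summing over $I \ni i$ therefore reduces to summing $\det G_{I'}$ over $I' \subset \mathbb{N} \setminus \{i\}$ with $\abs{I'} = q-1$, and a second invocation of Cauchy--Binet, this time in the form of \cref{th:cauchy-binet}, recognises $\sum_{I'} \det G_{I'} = \mathcal{R}_{q-1}(\mathsf{A})$. Since $\tr \mathsf{A}^{2} = 1$, the elementary bound
\begin{equation*}
\mathcal{R}_{q-1}(\mathsf{A}) = \sum_{i_{1} \ne \dots \ne i_{q-1}} \lambda_{i_{1}}^{2} \dots \lambda_{i_{q-1}}^{2} \leq \paren[\big]{\sum_{k} \lambda_{k}^{2}}^{q-1} = 1
\end{equation*}
then assembles everything into $\upsilon_{i}(\mathsf{B}) \leq 2 \tau_{i}(\mathsf{A}) \leq 2 \tau(\mathsf{A})$, which is in fact a slight strengthening of the announced inequality.

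No step presents a genuine difficulty; the only subtlety is to realise that Cauchy--Binet should be invoked \emph{twice} --- once ``forward'' to collapse the inner $J$-sum into a Gram determinant, once ``backward'' to repackage the outer $I$-sum as a spectral remainder --- with Fischer's inequality slotted in between to extract the desired factor $\tau_{i}(\mathsf{A})$.
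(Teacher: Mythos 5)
Your proof is correct and takes a genuinely different route from the paper's. The paper bounds $b_{IJ} = [\det \mathsf{A}(I,J)]^2$ by applying Jensen's inequality to the permutation expansion of the determinant, yielding $b_{IJ} \leq q! \sum_{\sigma} \prod_l a_{i_l j_{\sigma(l)}}^2$, and then books-keeps the indicator $1_{\exists l : i_l = i}$ over the $q$ possible slots, which is where the factor $q$ enters. You instead collapse the inner $J$-sum exactly via Cauchy--Binet into the Gram determinant $\det G_I$, extract the single factor $\tau_i(\mathsf{A})$ with Fischer's inequality (noting correctly that the principal $q{-}1$-submatrix of $G_I$ coincides with $G_{I \setminus \{i\}}$ because the inner contraction runs over all of $\mathbb{N}$), and finish by observing $\sum_{|I'|=q-1,\, I' \not\ni i} \det G_{I'} \leq \mathcal{R}_{q-1}(\mathsf{A}) \leq (\tr \mathsf{A}^2)^{q-1} = 1$ — here the equality you announce in that display should really be the inequality $\leq$, but that is all you use. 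What your argument buys is a strictly sharper constant: $\upsilon(\mathsf{B}) \leq 2\tau(\mathsf{A})$ rather than $2q\,\tau(\mathsf{A})$, removing the factor $q$ that the Jensen step costs, and it avoids the combinatorial rearrangement of permutations entirely. The only point worth making explicit for full rigor is that Cauchy--Binet for the $q \times \mathbb{N}$ matrix $\mathsf{A}(I,\cdot)$ is valid because $\mathsf{A}$ is Hilbert--Schmidt, so the rows are $\ell^2$ and the infinite sum over $J$ converges absolutely.
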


\begin{proof}
  Let $i \in \mathbb{N}$.
  For $I$ and $J\in \mathscr{P}_{q}(\mathbb{N})$, write $I = \set{ i_{1}, \dots, i_{q}}$ and $J = \set{j_{1}, \dots, j_{q}}$ with the elements being increasingly ordered.
  By definition, $\Sigma_{q}$ the set of permutations on $\{1,\dots, q\}$, we have
  \begin{equation*}
    \det \mathsf{A}(I, J) = \sum_{\sigma \in \Sigma_{q}} (-1)^{\abs{\sigma}} \prod_{l=1}^{q} a_{i_{l} j_{\sigma(l)}}.
  \end{equation*}
  By Jensen's inequality,
  \begin{equation*}
    b_{IJ} = \bracket*{ \det \mathsf{A}(I,J) }^{2} \leq q! \sum_{\sigma \in \Sigma_{q}} \prod_{l=1}^{q} a_{i_{l}j_{\sigma(l)}}^{2}.
  \end{equation*}
  Thus, we find that
  \begin{equation*}
    \upsilon_{i}(\mathsf{B}) \leq \sum_{I \cup J \ni i} q! \sum_{\sigma \in \Sigma_{q}} \prod_{l=1}^{q} a^{2}_{i_{l}j_{\sigma(l)}}.
  \end{equation*}
  On the one hand, when $(j_{1}, \dots, j_{q})$ ranges through increasingly ordered sets, $(j_{\sigma(1)}, \dots, j_{\sigma(q)})$ for $\sigma$ ranging in $\Sigma_{q}$ ranges through all non-ordered sets.
  On the other hand to go from increasingly ordered $(i_{1}, \dots, i_{q})$ to pairwise disjoint $(i_{1}, \dots, i_{q})$, we have to pay a factor $q!$.
  It follows that
  \begin{equation*}
    \upsilon_{i}(\mathsf{B}) \leq 2 \sum_{i_{1} \ne \dots \ne i_{q}} \sum_{j_{1} \ne \dots \ne j_{q}} 1_{\set*{\exists l : i_{l} = i}} \prod_{l=1}^{q} a^{2}_{i_{l}j_{l}}.
  \end{equation*}
  By symmetry, we finally get
  \begin{equation*}
    \upsilon_{i}(\mathsf{B}) \leq 2 q \paren*{\sum_{j} a^{2}_{ij}}\paren*{ \sum_{k,j} a_{kj}^{2}}^{q-1} = 2q \tau_{i}(\mathsf{A}) \paren*{\tr \mathsf{A}^{2}}^{q-1}.
  \end{equation*}
  This concludes the proof since $\tr \mathsf{A}^{2} = 1$.
\end{proof}

\subsection{Sobolev regularity and Sobolev regularity in Fourier modes}

\subsubsection{Regularity for functions}
We recall notions regarding Sobolev regularity of a function $f$ that we use to measure the regularity of the density of random variables.

\paragraph{Hölder regularity.}
For $k \in \mathbb{N}$, we write $\mathscr{C}^{k}$ for the space of $k$ times continuously differentiable functions on $\mathbb{R}$, equipped with the norm
\begin{equation*}
  \norm{f}_{\mathscr{C}^{k}} \coloneq \max_{l=1,\dots,k} \Sup*{ \abs{f^{(l)}(x)} : x \in \mathbb{R} }.
\end{equation*}
Additionally, for $\alpha \in [0,1]$, we write $\mathscr{C}^{k,\alpha}$ for the space of $f \in \mathscr{C}^{k}$ whose $k$-th derivative is also $\alpha$-Hölder, equipped with the norm
\begin{equation*}
  \norm{f}_{\mathscr{C}^{k,\alpha}} \coloneq \norm{f}_{\mathscr{C}^{k}} + \sup_{x\ne y} \frac{\abs{f^{(k)}(x) - f^{(k)}(y)}}{\abs{x-y}^{\alpha}}.
\end{equation*}

\paragraph{Sobolev regularity.}
Actually, it is more convenient to work with the notion of weak regularity whose basic definitions are recalled below.
For $p \in [1,\infty]$, we write $\mathscr{L}^{p}$ for the \emph{Lebesgue space of order $p$} on $\mathbb{R}$.
We also define $\mathscr{S}$ is the \emph{Schwartz space} of rapidly decreasing functions on $\mathbb{R}$, and $\mathscr{S}'$ is its dual the space of \emph{tempered distributions}.
On $\mathscr{S}'$, we can define by duality a \emph{derivative operator} $\partial$.
By spectral calculus, we can actually make sense of any power of the Laplace operator, that is we can define, for all $s \in \mathbb{R}_{+}$, the operator $(1-\partial^{2})^{s}$ on $\mathscr{S}'$.
Every element $f \in \mathscr{L}^{p}$ induces a tempered distribution still denoted by $f$.
Conversely, for a tempered distribution $T$ we write $T \in \mathscr{L}^{p}$ whenever it is induced by a function in $\mathscr{L}^{p}$.
Recall the definition of the \emph{fractional Sobolev spaces}:
\begin{equation*}
  \mathscr{W}^{s,p} \coloneq \set*{ f \in \mathscr{S}' : (1-\partial^{2})^{s/2} f \in \mathscr{L}^{p} },  \qquad p \in [1,\infty],\, s \in \mathbb{R},
\end{equation*}
equipped with the norm
\begin{equation*}
  \norm{f}_{\mathscr{W}^{s,p}} \coloneq \norm{(1 - \partial^{2})^{s/2} f}_{\mathscr{L}^{p}}.
\end{equation*}
For $s \in \mathbb{N}$, the Sobolev norm $\norm{\cdot}_{\mathscr{W}^{s,p}}$ is equivalent to the \emph{classical} Sobolev norm
\begin{equation*}
  \norm{f}_{\mathscr{L}^{p}} + \norm{\partial^{s}f}_{\mathscr{L}^{p}}.
\end{equation*}
Sobolev spaces are relevant for our analysis due to the celebrated \emph{Sobolev embeddings}
\begin{equation}\label{eq:embedding-sobolev-holder}
  \mathscr{W}^{s,p} \hookrightarrow \mathscr{C}^{k,\alpha}, \qquad k + \alpha = s - \frac{1}{p},\, k \in \mathbb{N}, \, \alpha \in [0,1].
\end{equation}

\paragraph{Sobolev regularity in Fourier mode.}
In this work, it is convenient to work with the \emph{Fourier transform}.
Recall that by duality, we can define, on $\mathscr{S}'$, the \emph{Fourier transform}, and its inverse $\mathscr{F}^{-1}$.
Recall that the Fourier transform sends differential operators to multiplication operators.
In particular,
\begin{equation*}
  \mathcal{F}\bracket*{(1-\partial^{2})^{s} f}(\xi) = (1+\xi^{2})^{s} \hat{f}(\xi), \qquad \xi \in \mathbb{R},\, s \in \mathbb{R}.
\end{equation*}
We also write $\hat{f} = \mathcal{F} f$.
We then define the \emph{Sobolev space in Fourier mode}
\begin{equation*}
  \mathcal{F} \mathscr{W}^{s,p} \coloneq \set*{ f \in \mathscr{S}' : ( \xi \mapsto (1+\xi^{2})^{s/2} \hat{f}(\xi) ) \in \mathscr{L}^{p} }, \qquad p \in [1,\infty], \, s \in \mathbb{R},
\end{equation*}
equipped with the norm
\begin{equation*}
  \norm{f}_{\mathcal{F}\mathscr{W}^{s,p}} \coloneq \norm{(1+\xi^{2})^{s/2} \hat{f}}_{\mathscr{L}^{p}} = \norm*{ \mathcal{F} \bracket*{ (1-\partial^{2})^{s/2} f} }_{\mathscr{L}^{p}}.
\end{equation*}
Due to the Fourier isomorphism theorem \cite[Thm.\ 7.1.11]{Hormander}, we have that $\mathcal{F} \mathscr{W}^{s,2} = \mathscr{W}^{s,2}$ for all $s \in \mathbb{R}$.
In general, by \cite[Thm.\ 7.1.13]{Hormander}, for $p \in [1,2]$ and $p' \coloneq p/(p-1) \in [2,\infty]$ the Hölder conjugate of $p$, we have that $\mathcal{F} \colon \mathscr{L}^{p} \to \mathscr{L}^{p'}$ is bounded.
While, by \cite[Thm.\ 7.9.3]{Hormander}, for $q > 2$, and $s > 1/2 - 1/q$, $\mathcal{F} \colon \mathscr{L}^{q} \to \mathscr{W}^{-s,2}$ is bounded.
Consequently, we have that
\begin{equation}\label{eq:embedding-fourier-sobolev}
  \mathscr{W}^{s,p} \hookrightarrow \mathcal{F} \mathscr{W}^{s,p'} \hookrightarrow \mathscr{W}^{s-t,2}, \qquad p \in [1,2],\, s\in \mathbb{R},\, t > \frac{1}{p} - \frac{1}{2}.
\end{equation}

\subsubsection{Regularity at the level of the random variable}

It is more convenient to manipulate quantities defined at the level of a random variables rather than referring to its density.
Whenever a random variable $Z$ has density $f$, we let
\begin{align*}
  & \mathbf{N}_{s,p}(Z) \coloneq \norm{f}_{\mathcal{F}\mathscr{W}^{s,p}}
\\& \mathbf{W}_{s,p}(Z) \coloneq \norm{f}_{\mathscr{W}^{s,p}}.
\end{align*}
We easily verify that
\begin{equation*}
  \mathbf{N}_{s,p}(Z) = \paren*{ \int \abs*{ \paren*{1+\xi^{2}}^{s/2} \Esp*{ \mathrm{e}^{\mathrm{i} \xi Z} } }^{p} \mathrm{d} \xi }^{1/p},
\end{equation*}
where, when $p =\infty$, the above integral is understood as an essential supremum.
Owing to the duality between $\mathscr{L}^{p}$ and $\mathscr{L}^{p'}$, we find that, for $s \in \mathbb{N}$
\begin{equation*}
  \mathbf{W}_{s,p}(Z) = \Sup*{ \abs*{ \Esp*{ \partial^{s} \varphi(Z)} } : \varphi \in \mathscr{C}_{c}^{\infty},\, \norm{\varphi}_{\mathscr{L}^{p'}} \leq 1 }.
\end{equation*}

\subsection{The Wiener space}
All random variables are defined on a sufficiently large probability space $(\Omega, \mathfrak{W}, \prob)$.
Let us first recall some basic notions regarding the Wiener space and the associated Dirichlet form.
We follow \cite{BouleauHirsch} (see also \cite{Nualart,NourdinPeccati}).
The {Wiener space} is the probability space given by the countable product \begin{equation*}
  \bigotimes_{k \in \mathbb{N}} (\mathbb{R}, \mathfrak{B}(\mathbb{R}), \gamma),
\end{equation*}
where $\mathfrak{B}(\mathbb{R})$ is the Borel $\sigma$-algebra of $\mathbb{R}$ and $\gamma$ is the standard Gaussian measure.
We consider the projection maps
\begin{equation*}
  Y_{k} \colon \mathbb{R}^{\mathbb{N}} \ni (x_{i}) \to x_{k}.
\end{equation*}
By construction, under $\gamma^{\mathbb{N}}$, $(Y_{k})$ is a sequence of independent standard Gaussian variables.
Whenever $(X_{i})$ is a sequence of random variables each defined on an independent Wiener space, we can look at the sequence $(X_{i})$ defined on the Wiener space.
Indeed, for each $i \in \mathbb{N}$ writing $(Y_{k,i})_{k}$ for the coordinate system associated with the Wiener space on which $X_{i}$ is defined, we see that $(X_{i})$ is defined on
\begin{equation*}
  \bigotimes_{k,i \in \mathbb{N}} (\mathbb{R}, \mathfrak{B}(\mathbb{R}), \gamma) \simeq \bigotimes_{k \in \mathbb{N}} (\mathbb{R}, \mathfrak{B}(\mathbb{R}), \gamma).
\end{equation*}
We write $\mathbb{L}^{p}$ for the space of $L^{p}$ random variables, measurable with respect to the underlying Wiener space.
We stress that our probability space $\Omega$ is larger than the underlying Wiener space, and contains random variables independent of this underlying Wiener space,
In particular $L^{p}(\prob)$ is larger than $\mathbb{L}^{p}$.

\subsubsection{Dirichlet form on the Wiener space}\label{s:dirichlet-wiener}

\paragraph{Malliavin derivatives}
For \emph{cylindrical random variables}, that are of the form
\begin{equation*}
  X = f(Y_{1}, \dots, Y_{l}), \qquad f \in \mathscr{C}^{\infty}(\mathbb{R}^{l}),\, l \in \mathbb{N}^{*},
\end{equation*}
we define the first and second \emph{Malliavin derivatives}
\begin{align*}
  & \mathsf{D} X \coloneq (\partial_{y_{k}}X)_{k},
\\& \mathsf{D}^{2}X \coloneq (\partial_{y_{k}} \partial_{y_{l}}X)_{kl}.
\end{align*}
These two differential operators can be extended to non-cylindrical random variables through a standard closure argument, since $\mathsf{D}$ and $\mathsf{D}^{2}$ are closable in $\mathbb{L}^{2}(\ell^{2}(\mathbb{N}))$ and $\mathbb{L}^{2}(\ell^{2}(\mathbb{N} \otimes \mathbb{N}))$ respectively, see \cite[Prop.~1.2.1]{Nualart}.
Our construction of Malliavin derivatives coincides with the setting of an \emph{isonormal Gaussian process} over the Hilbert space $\ell^{2}(\mathbb{N})$.
In \cref{s:bouleau-derivative}, choosing the Hilbert space as a Gaussian space leads to the construction of the \emph{Bouleau derivative}, that plays a crucial role in our analysis.
\paragraph{Carré du champ}
Provided a random variable $F$ is smooth enough, for instance we can take $F \in \mathbb{R}[Y_{1}, \dots, Y_{N}]$ for some $N \in \mathbb{N}$, we define its \emph{carré du champ}
\begin{equation*}
  \Gamma(F,F) \coloneq \sum_{k} (\partial_{y_{k}} F)^{2}.
\end{equation*}
Classically, by density and polarization, this quadratic operator can be extended, by a standard closure procedure, to an unbounded quadratic form
\begin{align*}
  & \Gamma \colon \mathbb{L}^{2} \times \mathbb{L}^{2} \to \mathbb{L}^{1},
\\& \dom \Gamma \coloneq \set*{ F \in \mathbb{L}^{2} : \Gamma(F,F) < \infty }.
\end{align*}

\paragraph{Wiener--Dirichlet form}
The carré du champ allows us to define a so-called \emph{Dirichlet form}, that is a closed symmetric non-negative bilinear form defined on the dense subspace $\dom \Gamma \subset \mathbb{L}^{2}$
\begin{equation*}
  \mathcal{E}(F,F) \coloneq \esp \Gamma(F,F).
\end{equation*}
To this Dirichlet form corresponds a unbounded self-adjoint \emph{Markov generator}, sometimes called the \emph{generator of the Ornstein--Uhlenbeck semi-group}
\begin{equation*}
  \mathsf{L} \colon \mathbb{L}^{2} \to \mathbb{L}^{2},
\end{equation*}
with domain $\dom \mathsf{L} \subset \dom \mathcal{E}$, and characterized by the following integration by parts formula
\begin{equation*}
  \mathcal{E}(F, R) = - \Esp*{ F \mathsf{L} R}, \qquad F \in \dom \mathcal{E},\, R \in \dom \mathsf{L}.
\end{equation*}

\paragraph{Smooth random variables}
Carrying out our computations requires to work with elements of a sufficiently large set of random variables on which we can perform certain operations.
Let us thus introduce the set of \emph{smooth random variables} 
\begin{equation*}
  \mathbb{D}^{\infty} \coloneq \set*{ \mathsf{L}^{k} F \in \bigcap_{p \in (1,\infty)} \mathbb{L}^{p} \cap \dom \mathsf{L}, \qquad k \in \mathbb{N} }.
\end{equation*}
Recall that in the definition above $\mathsf{L}^{0}F = F$.
It is known \cite[Chap.\ 2 \S 7]{BouleauHirsch}, that $\mathbb{D}^{\infty}$ is an algebra, and that, whenever $F \in \mathbb{D}^{\infty}$, then, also $\Gamma(F,F) \in \mathbb{D}^{\infty}$.

\paragraph{Important formulas related to the locality}
The Wiener Dirichlet form is \emph{diffusive}, meaning that the carré du champ satisfies a chain rule
\begin{equation}\label{eq:chain-rule-carre-du-champ}
  \Gamma(\varphi(F_{1}, \dots, F_{d}), R) = \sum_{k=1}^{d} \partial_{k} \varphi(F_{1}, \dots, F_{d}) \Gamma(F_{k},R), \qquad F_{1}, \dots, F_{d},\, R \in \dom \mathcal{E},\, \varphi \in \mathscr{C}^{1}_{b};
\end{equation}
and a Leibniz rule
\begin{equation}\label{eq:leibniz-carre-du-champ}
  \Gamma(FR, S) = R\Gamma(F,S) + F\Gamma(R,S), \qquad F,\,R,\,S \in \mathbb{D}^{\infty}.
\end{equation}
In particular, we have the integration by parts formula for $\Gamma$
\begin{equation}\label{eq:leibniz-formula-3}
  \Esp*{ \Gamma(F, S) R } = - \Esp*{ FR \mathsf{L} S } - \Esp*{ \Gamma(R,S) F}, \qquad F,\, R,\, S \in \mathbb{D}^{\infty}.
\end{equation}

\subsubsection{Hypercontractivity on smooth polynomial}\label{s:hypercontractivity}

We finish these reminders by stating some norm equivalence on \emph{smooth polynomials}, those are multi-linear polynomials evaluated in independent copies of elements of $\mathbb{D}^{\infty}$.
Whenever the elements are elements of the \emph{Wiener chaoses} (see \cite{NourdinPeccati} for definitions), this equivalence of norms is well-known.
Because we are working with generic elements of $\mathbb{D}^{\infty}$, we give a complete proof based on hypercontractivity estimates from \cite{MDOInvariance}.

Recall that our Wiener is generated by the independent array $(Y_{i,k})_{ik}$.
We call \emph{smooth basis} a sequence $\mathcal{X}=\{\mathcal{X}_1,\mathcal{X}_2,\ldots\}$ where there exist a finite number of functions $f_{1}, \dots, f_{p}$ such that $\mathcal{X}_i = \set*{f_{1}((Y_{i,k})_{k}), \dots, f_{p}((Y_{i,k})_{k})} \subset \mathbb{D}^{\infty}$.
If $I$ is  finite subset of $\mathbb{N}$, we denote
\begin{equation*}
  \mathcal{X}_I \coloneq \set*{\prod_{i\in I} X_i,\, X_i\in \mathcal{X}_i },
\end{equation*}
and, we define, for all $m\in\mathbb{N}$, $\mathcal{P}_m(\mathcal{X})$ as the vector space spanned by the random variables $X_I\in \mathcal{X}_I$ for $|I|\leq m$.

\begin{lemma}\label{th:polynomial:equivalence-lp}
  If $\mathcal{X}$ is a smooth basis, all the $L^{p}$-norms are equivalent on $\mathcal{P}_m(\mathcal{X})$.
\end{lemma}
\begin{proof}
  It is a consequence of \cite[Props.\ 3.16, 3.11 \& 3.12]{MDOInvariance}.
  The Wiener space or the Dirichlet form are actually not used here.
\end{proof}

\begin{lemma}\label{th:polynomial:dirichlet-continuous}
  Let $\mathcal{X}$ be a smooth basis and $m \in \mathbb{N}$.
  Then, the bilinear form $\mathcal{E}$ is continuous on $(\mathcal{P}_{m}(\mathcal{X}), \norm{\cdot}_{L^{2}})$.
\end{lemma}
\begin{proof}
Let $I$ and $J \subset \mathbb{N}$ with $|I|\leq m$ and $|J| \leq m$.
Take $F_{I}$ and $F_{J}$ respectively in the linear span of $\mathcal{X}_{I}$ and $\mathcal{X}_{J}$.
Namely, there exist finite sets $L$ and $K \subset \mathbb{N}$, $(a_{k}) \in \mathbb{R}^{K}$, $(b_{l}) \in \mathbb{R}^{L}$, and random variables $X_{i}^{k} \in \mathcal{X}_{i}$ ($i \in I, k \in K$) and $X_{j}^{l} \in \mathcal{X}_{j}$ ($j \in J,l \in L$) such that
\begin{equation*}
  F_{I} = \sum_{k \in K} a_{k} X_{I}^{k},
  \qquad \text{and} \qquad
  F_{J} = \sum_{l \in L} b_{l} X_{J}^{l}.
\end{equation*}
From the independence and the centering, we see immediately that
\begin{equation*}
  \Esp*{F_{I} F_{J}} = 1_{I = J} \sum_{k \in K,l \in L} a_{k} b_{l} \prod_{i \in I,j \in J} \Esp*{X_{i}^{k} X_{i}^{l}}.
\end{equation*}
Moreover, by bilinearity and the Leibniz rule for $\Gamma$, we have that
\begin{equation*}
  \Gamma(F_{I}, F_{J}) = \sum_{k \in K,l \in L} \sum_{i \in I,j \in J} X^{k}_{I \setminus \{i\}} X^{l}_{J \setminus \{j\}} \Gamma(X^{k}_{i}, X^{l}_{j}).
\end{equation*}
Since for $i \ne j$, $X_{i} \in \mathcal{X}_{i}$ and $X_{j} \in \mathcal{X}_{j}$ leaves in two independent copies of the Wiener space, we have in this case $\Gamma(X_{i}, X_{j}) = 0$.
Thus taking expectation in the expression above, and using again the independence and the centering, yields
\begin{equation*}
  \Esp*{\Gamma(F_{I}, F_{J})} = 1_{I = J}\sum_{k \in K, l \in L} a_{k} b_{l} \sum_{i \in I} \Gamma(X_{i}^{k}, X_{i}^{l}) \prod_{i' \in I \setminus \{i\}} \Esp*{X_{i'}^{k} X_{i'}^{l}}.
\end{equation*}
Let
\begin{equation*}
  c \coloneq m \sup_{i \in I} \sup_{X_{i}, X_{i}' \in \mathcal{X}_{i}} \frac{\esp \Gamma(X_{i}, X_{i}')}{\esp X_{i} X_{i}'} = m \sup_{X,X' \in \mathcal{X}_{1}} \frac{\esp \Gamma(X_{1}, X_{1}')}{\esp X_{1} X_{1}'}.
\end{equation*}
The equality holds since all the $\mathcal{X}_{i}$'s are copies of each other.
Moreover, $c$ is finite since $\mathcal{X}_{1}$ is a finite set of $\mathbb{D}^{\infty}$.
We have shown, on the one hand that the family $(F_{I})$ is orthogonal for the $\mathbb{L}^{2}$-scalar product as well as with respect to the bilinear form $\mathcal{E}$; and, on the other hand, that
\begin{equation*}
  \mathcal{E}(F_{I}, F_{I}) \leq c \Esp{F_{I}^{2}}.
\end{equation*}
Now, take $F = \sum_{|I| \leq m} F_{I} \in \mathcal{P}_{m}(\mathcal{X})$.
By orthogonality and the above inequality
\begin{equation*}
  \mathcal{E}(F,F) = \sum_I \mathcal{E}(F_I,F_I) \leq C \Esp{F_{I}^{2}} = C \Esp{F^{2}}.
\end{equation*}
This completes the proof.
\end{proof}

\begin{lemma}\label{th:polynomial:l-continuous}
  Let $\mathcal{X}$ be a smooth basis and $m \in \mathbb{N}$.
  Then, there exists a smooth basis $\mathcal{X}'$ such that $\mathsf{L} \colon \mathcal{P}_m(\mathcal{X})\to  \mathcal{P}_m(\mathcal{X}')$.
  Moreover, $\mathsf{L}$ is continuous from $(\mathcal{P}_m(\mathcal{X}),\norm{\cdot}_{L^p})$ to $(\mathcal{P}_m(\mathcal{X}'),\norm{\cdot}_{L^q})$ for any $1\leq p,q< +\infty$.
\end{lemma}
\begin{proof}
  Take $I \subset \mathbb{N}$ with $|I| \leq m$ and $X_{I} \in \mathcal{X}_{I}$.
  For $X$ and $Y \in \mathbb{D}^{\infty}$, we have $\mathsf{L}(XY) = X \mathsf{L} Y + Y \mathsf{L} X + 2 \Gamma(X,Y)$.
  Since whenever $X$ and $Y$ are defined on independent Wiener space, $\Gamma(X,Y) = 0$, we find that
  \begin{equation*}
    \mathsf{L} X_{I} = \sum_{i \in I} X_{I \setminus \{i'\}} \mathsf{L} X_{i}.
  \end{equation*}
The first part of the statement follows by taking $\mathcal{X}' \coloneq \{\mathcal{X}_1',\mathcal{X}_2',\ldots\}$ with
\begin{equation*}
  \mathcal{X}_i'\coloneq \mathcal{X}_i\cup\{\mathsf{L}(X) : X\in \mathcal{X}_i\}, \qquad i \in \mathbb{N}.
\end{equation*}
Next, by \cref{th:polynomial:dirichlet-continuous}, there exists $C > 0$ such that for $F$ in $\mathcal{P}_m(\mathcal{X})$ we have $|\Esp*{F \mathsf{L}(F)}|=\mathcal{E}(F,F)\leq C \norm{F}_{L^2}^2$.
It implies that $\mathsf{L} \colon (\mathcal{P}_m(\mathcal{X}),\norm{\cdot}_{L^2}) \to (\mathcal{P}_m(\mathcal{X}'),\norm{\cdot}_{L^2})$ is continuous.
Since, by \cref{th:polynomial:equivalence-lp}, all the $L^p$ norms are equivalent on $\mathcal{P}_m(\mathcal{X})$ and $\mathcal{P}_m(\mathcal{X}')$, we conclude.
\end{proof}

\begin{lemma}\label{th:polynomial:gamma-continuous}
  Let $\mathcal{X}$ be a smooth basis and $m \in \mathbb{N}$.
  Then, there exists a smooth basis $\mathcal{X}'$ such that $\Gamma \colon \mathcal{P}_m(\mathcal{X})\times  \mathcal{P}_m(\mathcal{X}) \to  \mathcal{P}_{2m-1}(\mathcal{X}')$.
  Moreover,  for any $1\leq p,q< +\infty$, the bilinear operator $\Gamma$ is continuous when seen as an operator
  \begin{equation*}
    (\mathcal{P}_m(\mathcal{X}),\norm{\cdot}_{L^p})\times (\mathcal{P}_m(\mathcal{X}),\norm{\cdot}_{L^p}) \longrightarrow (\mathcal{P}_{2m-1}(\mathcal{X}'),\norm{\cdot}_{L^q}).
  \end{equation*}
\end{lemma}
\begin{proof}
  Using the Leibniz rule for $\Gamma$ and the fact that $\Gamma(X,Y) = 0$ whenever $X$ and $Y$ are defined on different Wiener spaces, we find that
  \begin{equation*}
    \Gamma(X_{I}, Y_{J}) = \sum_{i \in I \cap J} X_{I \setminus \{i\}} Y_{J \setminus \{i\}} \Gamma(X_{i}, Y_{i}), \qquad X_{I} \in \mathcal{X}_{I},\, Y_{J} \in \mathcal{X}_{J}.
  \end{equation*}
This shows that the first part of the statement holds setting $\mathcal{X}' \coloneq \{\mathcal{X}_1',\mathcal{X}_2',\ldots\}$ with 
\begin{equation*}
  \mathcal{X}_i' \coloneq \mathcal{X}_i\cup\{XY-\Esp*{XY}, X,Y\in \mathcal{X}_i\}\cup\{\Gamma(X,Y)-\mathcal{E}(X,Y), X,Y\in \mathcal{X}_i\}.
\end{equation*}
By \cref{th:polynomial:dirichlet-continuous}, there exists $C > 0$ such that for $F \in \mathcal{P}_m(\mathcal{X})$, we have $\norm{\Gamma(F,F)}_{L^{1}} = \mathcal{E}(F,F)\leq C \norm{F}_{L^{2}}^2$.
It implies that $\Gamma \colon (\mathcal{P}_m(\mathcal{X}),\norm{\cdot}_{L^2})\times (\mathcal{P}_m(\mathcal{X}),\norm{\cdot}_{L^2}) \to (\mathcal{P}_m(\mathcal{X}'),\norm{\cdot}_{L^1})$ is continuous.
By \cref{th:polynomial:equivalence-lp}, all the $L^p$ norms are equivalent on $\mathcal{P}_m(\mathcal{X})$ and $\mathcal{P}_{2m-1}(\mathcal{X}')$, and we conclude.
\end{proof}
If $k$ is an integer, we use the notation $\Gamma^k(F)$ for the variable defined by the induction formula $\Gamma^{1} = \Gamma$, and $\Gamma^{k+1}(F) \coloneq \Gamma\left[\Gamma^{k}(F),\Gamma^{k}(F)\right]$, $k \in \mathbb{N}^{*}$.
\begin{lemma}\label{th:polynomial:equivalence}
  Let $\mathcal{X}$ be a smooth basis, $m,k \in \mathbb{N}$, and $1<p<+\infty$.
  Then, there exists a constant $C$ such that for any $F\in \mathcal{P}_m(\mathcal{X})$ with $\Esp*{F^{2}} = 1$, we have $\norm{\Gamma^k(F)}_{L^{p}} \leq C$ and $\norm{\Gamma^k(LF)}_{L^{p}} \leq C$.
\end{lemma}
\begin{proof}
  Follows by successive applications of \cref{th:polynomial:l-continuous,th:polynomial:gamma-continuous}.
\end{proof}

\section{Regularity of laws of a random variables on the Wiener space}

\subsection{Regularity estimates for smooth random variables}\label{s:regularity-independent-sequence}
\subsubsection{Regularity of laws from positivity of the carré du champ}\label{s:regularity-positivity-gamma}

The analysis of Dirichlet forms is relevant for the study of regularity of laws.
Indeed, as it is part of the folklore: if $F \in \mathbb{D}^{\infty}$ and $\Gamma(F,F)^{-1} \in \cap_{1<p<\infty} \mathbb{L}^{p}$, then $F$ has a law that is $\mathscr{C}^{\infty}$.
See \cite[Thm.\ 1.14]{Watanabe} for this statement.
Here, we give a quantitative version of this estimate.

We fix $F \in \mathbb{D}^{\infty}$.
Let us define recursively $\Psi_{0}(F) \coloneq 1$, and
\begin{equation*}
  \Psi_{k+1}(F) \coloneq  \Gamma(F,F) \bracket*{ - \Psi_{k}(F) \mathsf{L} F - \Gamma(F, \Psi_{k}(F)) } + \Psi_{k}(F) \Gamma(F, \Gamma(F,F)), \qquad k \in \mathbb{N}.
\end{equation*}
Since for $F \in \mathbb{D}^{\infty}$, $\mathsf{L} F \in \mathbb{D}^{\infty}$, $\Gamma(F,F) \in \mathbb{D}^{\infty}$, and that $\mathbb{D}^{\infty}$ is an algebra, an induction yields that $\Psi_{k}(F) \in \mathbb{D}^{\infty}$ for all $k \in \mathbb{N}$.
For convenience, write
\begin{equation*}
  \boldsymbol{\Psi}_{k}(F) \coloneq \norm{\Psi_{k}(F)}_{\mathbb{L}^{2}}.
\end{equation*}
The quantities $\Psi_{k}$'s allows us to perform integration by parts and thus to control the regularity of the density of $F$.
\begin{proposition}\label{th:integration-by-parts-gamma}
  Let $F \in \mathbb{D}^{\infty}$ smooth such that $\Gamma(F,F)^{-1} \in \mathbb{L}^{4k}$ for some $k \in \mathbb{N}$.
  Then,
\begin{equation}\label{eq:ipp-phi-k-gamma}
  \Esp*{ \varphi^{(k)}(F) } = \Esp*{ \varphi(F) \frac{\Psi_{k}(F)}{\Gamma(F,F)^{2k}} }, \qquad \varphi \in \mathscr{C}^{\infty}_{c}.
\end{equation}
  In particular, we have that
  \begin{equation}\label{eq:bound-sobolev-gamma}
    \mathbf{W}_{k,1}(F) \lesssim \boldsymbol{\Psi}_{k}(F) \Esp*{ {\Gamma(F,F)^{-4k}} }^{1/2}.
  \end{equation}
\end{proposition}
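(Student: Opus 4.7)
The plan is an induction on $k$ driven by the chain rule \cref{eq:chain-rule-carre-du-champ} for the carré du champ and the integration-by-parts formula \cref{eq:leibniz-formula-3}. For the base case $k=1$, I would write $\varphi'(F) = \Gamma(\varphi(F), F)/\Gamma(F,F)$ by the chain rule, take expectations, and apply \cref{eq:leibniz-formula-3} with $S = F$ and $R = 1/\Gamma(F,F)$. Unpacking $\Gamma(1/\Gamma(F,F), F) = -\Gamma(F, \Gamma(F,F))/\Gamma(F,F)^{2}$ by a second use of the chain rule gives exactly \cref{eq:ipp-phi-k-gamma} for $k=1$, with $R_{1}$ matching the recursion started at $R_{0} = 1$.

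For the inductive step $k \to k+1$, I would apply the hypothesis to $\varphi'$ to get $\Esp*{\varphi^{(k+1)}(F)} = \Esp*{\varphi'(F) R_{k}/\Gamma(F,F)^{2k}}$, and then repeat the one-step argument: rewrite $\varphi'(F) = \Gamma(\varphi(F), F)/\Gamma(F,F)$, apply \cref{eq:leibniz-formula-3} with $R = R_{k}/\Gamma(F,F)^{2k+1}$, and expand $\Gamma(R_{k}/\Gamma(F,F)^{2k+1}, F)$ through the Leibniz rule \cref{eq:leibniz-carre-du-champ} and the chain rule. Regrouping everything over the common denominator $\Gamma(F,F)^{2k+2}$ produces the numerator $R_{k+1}$ prescribed by the recursion, closing the induction.

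The Sobolev bound \cref{eq:bound-sobolev-gamma} then follows from the duality formulation of $\mathbf{W}_{k,1}$ recalled earlier in the paper: the supremum of $\Esp*{\varphi^{(k)}(F)}$ over test functions $\varphi \in \mathscr{C}_{c}^{\infty}$ with $\norm{\varphi}_{\mathscr{L}^{\infty}} \leq 1$ controls $\mathbf{W}_{k,1}(F)$, and \cref{eq:ipp-phi-k-gamma} bounds each such expectation by $\Esp*{\abs{R_{k}}/\Gamma(F,F)^{2k}}$.

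The main technical care lies in justifying each IBP step under the integrability assumption. Since $\mathbb{D}^{\infty}$ is an algebra stable under $\mathsf{L}$ and $\Gamma$, every $R_{k}$ lies in $\mathbb{D}^{\infty}$, hence in every $\mathbb{L}^{p}$. The hypothesis $\Gamma(F,F)^{-1} \in \mathbb{L}^{4k}$ is calibrated so that, by repeated Cauchy--Schwarz, the intermediate random variables of the form $R_{j}/\Gamma(F,F)^{m}$ with $m \leq 2k+1$ that appear along the induction remain in $\mathbb{L}^{2}$, which is what is required to invoke \cref{eq:leibniz-formula-3}. Keeping track of the loss of one power of $\Gamma(F,F)^{-1}$ at each IBP step, together with the factor two from a final Cauchy--Schwarz, explains the specific exponent $4k$; this exponent bookkeeping is the only delicate point.
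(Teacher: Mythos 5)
Your proof follows essentially the same route as the paper's: an induction on $k$ where the inductive step converts $\varphi'(F)$ to $\Gamma(\varphi(F),F)/\Gamma(F,F)$ via the chain rule, integrates by parts through \cref{eq:leibniz-formula-3}, and expands $\Gamma(R_k/\Gamma(F,F)^{2k+1},F)$ via Leibniz and the chain rule to recover $R_{k+1}$; the Sobolev bound then follows from the duality characterisation of $\mathbf{W}_{k,1}$. The one point where the paper is more careful than your sketch is the justification that $R_k\,\Gamma(F,F)^{-2k-1}\in\dom\mathcal{E}$ so that the IBP is legitimate: rather than a generic appeal to Cauchy--Schwarz, the paper computes $\Gamma(S^{-2k-1},S^{-2k-1})$ for $S=\Gamma(F,F)$ (rigorously via the regularised $(S+\varepsilon)^{-2k-1}$ and a limit) and invokes Meyer's inequalities to conclude $S^{-2k-1}\in\mathbb{D}^{1,2}=\dom\mathcal{E}$; and your accounting of the exponent (``one power per IBP step'') is slightly off — the denominator gains two powers of $\Gamma(F,F)$ per step, with the factor of two in $4k$ coming only from the terminal Cauchy--Schwarz pairing $|R_k|$ against $\Gamma(F,F)^{-2k}$. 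Neither of these affects the correctness of the overall argument.
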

\begin{proof}
  Since $\varphi$ is bounded, by the Cauchy--Schwarz inequality the right-hand sides in \cref{eq:ipp-phi-k-gamma,eq:bound-sobolev-gamma} are well-defined.
  For short we will write in this proof $\Psi_{k} \coloneq \Psi_{k}(F)$, $S \coloneq \Gamma(F,F)$, and
  \begin{equation*}
    W_{k} \coloneq \frac{\Psi_{k}}{\Gamma(F,F)^{2k}} = \frac{\Psi_{k}}{S^{2k}}.
  \end{equation*}
  We establish \cref{eq:ipp-phi-k-gamma} by induction.
  For $k = 0$, it is trivial.
  Assume that $S^{-1} \in \mathbb{L}^{4k+4}$ and that the claim is established up to some $k \in \mathbb{N}$.
  By \cref{eq:ipp-phi-k-gamma}, we get that
  \begin{equation}\label{eq:ipp-gamma-induction-step}
    \Esp*{ \varphi^{(k+1)}(F) } = \Esp*{ \varphi'(F) W_{k} }.
  \end{equation}
  Using the diffusion property \cref{eq:chain-rule-carre-du-champ}, we get that
  \begin{equation}\label{eq:ipp-gamma-appears}
    \Esp*{ \varphi'(F) W_{k} } = \Esp*{ \Gamma(\varphi(F), F) \frac{\Psi_{k}}{S^{2k+1}} }.
  \end{equation}
  We shall now carry out computations as if $S \in \mathbb{D}^{\infty}$.
  On the one hand, we have that $S^{-2k-1} \in \mathbb{L}^{2}$.
  On the other hand, by the chain rule \cref{eq:chain-rule-carre-du-champ}, we find that
  \begin{equation*}
    \Gamma(S^{-2k-1}, S^{-2k-1}) = (2k+1)^{2} S^{-4k-4} \Gamma(S, S).
  \end{equation*}
  Let us mention that this \emph{a priori} formal computation can be made rigorous by doing the computation at the level of the smooth random variable $\paren*{ S + \varepsilon}^{-2k-1}$, and then letting $\varepsilon \to 0$.
  Since $\Gamma(S, S) \in \mathbb{D}^{\infty}$, this shows that
  \begin{equation*}
    \Gamma(S^{-2k-1}, S^{-2k-1}) \in \mathbb{L}^{1}.
  \end{equation*}
  Thus, $S^{-2k-1} \in \mathbb{D}^{1,2} = \dom \mathcal{E}$.
  Since $\Psi_{k} \in \mathbb{D}^{\infty}$, we have shown that $\Psi_{k}S^{-2k-1} \in \dom \mathcal{E}$.
  Since we also have that $\varphi(F)$ and $F \in \mathbb{D}^{\infty}$, all the following computations make sense.
  By integration by parts and the Leibniz rule, we get that
  \begin{equation*}
    \begin{split}
      - \Esp*{ \varphi(F) \frac{\Psi_{k}}{S^{2k+1}} \mathsf{L} F } &= \Esp*{ \Gamma\paren*{\varphi(F) \frac{\Psi_{k}}{S^{2k+1}}, F} }
                                                              \\&= \Esp*{ \Gamma(\varphi(F), F) \frac{\Psi_{k}}{S^{2k+1}} } + \Esp*{ \varphi(F) \Gamma\paren*{\frac{\Psi_{k}}{S^{2k+1}}, F} }.
    \end{split}
  \end{equation*}
  Combining with \cref{eq:ipp-gamma-induction-step,eq:ipp-gamma-appears} yields
  \begin{equation*}
    \Esp*{ \varphi^{(k+1)}(F) } = \Esp*{ \varphi(F) \bracket*{ - \Psi_{k}S^{-2k-1} \mathsf{L}F - \Gamma(\Psi_{k}S^{-2k-1}, F) } }.
  \end{equation*}
  By the Leibniz rule and the chain rule, we find that
  \begin{equation*}
    \Gamma(\Psi_{k}S^{-2k-1},F) = S^{2k-1} \Gamma(\Psi_{k}, F) - (2k+1) \Psi_{k} \Gamma(S,F) S^{-2k-2}.
  \end{equation*}
  Recalling that $S = \Gamma(F,F)$ this concludes the proof in view of the definition of the $\Psi_{k}$'s.
\end{proof}

\subsubsection{The Bouleau derivatives}\label{s:bouleau-derivative}
We now use the formalism of Dirichlet forms on the Wiener space to derive regularity estimates for the density of a smooth random variable.
Roughly speaking, we construct conditionally \emph{Gaussian random variables} encoding the properties of the carré du champ.
As anticipated, the following objects play a pivotal role in our analysis.
\begin{definition}
  The first and second \emph{Bouleau derivatives} of $X \in \mathbb{D}^{\infty}$ are respectively defined by
\begin{align}
  &\label{def:diese-1} \sharp_{G} X \coloneq \mathsf{D} X \cdot \vec{G} = \sum_{k} (\partial_{y_{k}}X) G_{k},
\\&\label{def:diese-2} \sharp_{H} \sharp_{G} X \coloneq \vec{H} \cdot \mathsf{D}^{2}X \vec{G} = \sum_{kl} (\partial_{y_{k}} \partial_{y_{l}} X) G_{k} H_{l},
\end{align}
where $\vec{G} = (G_{k})$ and $\vec{H} = (H_{l})$ are sequences of independent standard Gaussian variables, such that the array $(G_{k}, H_{l})$ is independent of the underlying Wiener space.
\end{definition}
By construction, $\sharp_{G} X$ has the same law as $\Gamma(X,X)^{1/2}N$ where $N$ is a standard Gaussian variable independent of $\Gamma(X,X)$.
\begin{remark}
The Bouleau derivatives are nothing but the celebrated \emph{Malliavin derivatives} when we take derivative in the direction of an independent Gaussian space.
\end{remark}
It is convenient for us to identify $\sharp_{G}X$ and $\sharp_{H}\sharp_{G}X$ with element of the Wiener space generated by the underlying Wiener space, $G$ and $H$.
With this identification, we have $\sharp_{G}X$ and $\sharp_{H} \sharp_{G} X \in \mathbb{D}^{\infty}$.

\subsubsection{Regularity of the random variable through the Bouleau derivative}\label{s:regularity-via-bouleau-derivative}

Our general principle states that the regularity of the law of $\sharp_{G}F$ controls that of the law $F$.
\begin{theorem}\label{th:sobolev-regularity-derivative}
  Let $F \in \mathbb{D}^{\infty}$.
  Then,
  \begin{equation}\label{eq:sobolev-regularity-derivative}
    \begin{split}
      \mathbf{W}_{q,1}(F) &\lesssim \boldsymbol{\Psi}_{q}(F) \mathbf{N}_{8q+1,\infty}(\sharp_{G}F) ^{1/2}
                        \\& \lesssim \boldsymbol{\Psi}_{q}(F) \mathbf{W}_{8q+1,1}(\sharp_{G}F) ^{1/2}.
    \end{split}
  \end{equation}
\end{theorem}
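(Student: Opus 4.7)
The strategy is a two-step chain: \cref{th:sobolev-gamma-l-minus-p} reduces the Sobolev regularity of $F$ to negative moments of $\Gamma(F,F)$, and the conditional Gaussianity of $\sharp_{G}F$ established in \cref{th:sharp-properties} then converts these negative moments into Fourier decay of $\sharp_{G}F$. Concretely, \cref{th:sobolev-gamma-l-minus-p} applied at order $q$ gives
\begin{equation*}
  \mathbf{W}_{q,1}(F) \leq \Psi(\norm{F}_{\mathbb{D}^{\infty}}) \, \norm{\Gamma(F,F)^{-2q}}_{\mathbb{L}^{2}} = \Psi(\norm{F}_{\mathbb{D}^{\infty}}) \, \Esp*{\Gamma(F,F)^{-4q}}^{1/2},
\end{equation*}
so it suffices to bound $\Esp*{\Gamma(F,F)^{-4q}}$ by $\mathbf{N}_{8q+1,\infty}(\sharp_{G}F)$.

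The heart of the argument is the Fourier--Laplace identity $\Esp*{\mathrm{e}^{\mathrm{i} t \sharp_{G}F}} = \Esp*{\mathrm{e}^{-t^{2}\Gamma(F,F)/2}}$, obtained by conditioning on $\mathfrak{B}(E)$ and invoking that $\sharp_{G}F$ is then centred Gaussian with variance $\Gamma(F,F)$. Inserting this into the Gamma function representation of $x^{-4q}$ with $x = \Gamma(F,F)/2$, performing the change of variable $u = t^{2}/2$, and applying Fubini yield
\begin{equation*}
  \Esp*{\Gamma(F,F)^{-4q}} = C_{q} \int_{0}^{\infty} t^{8q-1} \Esp*{\mathrm{e}^{\mathrm{i} t \sharp_{G}F}} \mathrm{d} t
\end{equation*}
for an explicit constant $C_{q} > 0$. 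Splitting the integral at $t = 1$, the compact part is bounded by a constant since $t^{8q-1}$ is integrable at the origin for $q \geq 1$ (the case $q = 0$ being trivial), whereas the tail is controlled by the very definition of the Fourier--Sobolev norm, which gives $\abs*{\Esp*{\mathrm{e}^{\mathrm{i} t \sharp_{G}F}}} \leq \mathbf{N}_{8q+1,\infty}(\sharp_{G}F) (1+t^{2})^{-(8q+1)/2}$ and leaves a convergent $t^{-2}$ integral. Taking square roots delivers the first inequality of \cref{eq:sobolev-regularity-derivative}.

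The second inequality is the elementary Fourier bound $\norm{\hat{g}}_{\mathscr{L}^{\infty}} \leq \norm{g}_{\mathscr{L}^{1}}$ applied to $g = (1-\partial^{2})^{(8q+1)/2}f$, where $f$ denotes the density of $\sharp_{G}F$. The main subtlety is not the computation itself but the bookkeeping: finiteness of $\mathbf{N}_{8q+1,\infty}(\sharp_{G}F)$ implies that the characteristic function of $\sharp_{G}F$ decays at infinity, hence that $\Gamma(F,F) > 0$ almost surely, which legitimises the Gamma/Fubini manipulation above; the exponent $8q+1$ is essentially sharp, determined jointly by the $\Esp*{\Gamma^{-4q}}$-requirement of \cref{th:sobolev-gamma-l-minus-p} and the weight $t^{8q-1}$ produced by the Fourier--Laplace change of variable.
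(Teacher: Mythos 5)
Your proposal is essentially correct and follows the same structural route as the paper: reduce $\mathbf{W}_{q,1}(F)$ to $\Esp*{\Gamma(F,F)^{-4q}}^{1/2}$ via \cref{th:sobolev-gamma-l-minus-p}, then express the negative moment through the Fourier--Laplace identity $\Esp*{\mathrm{e}^{\mathrm{i} t \sharp_{G}F}} = \Esp*{\mathrm{e}^{-t^{2}\Gamma(F,F)/2}}$ furnished by \cref{th:sharp-properties}, and finally read off the decay from $\mathbf{N}_{8q+1,\infty}$. The only real variation is how you reach the $t$-integral: you apply the Gamma-function representation of $x^{-4q}$ directly and change variables $u = t^{2}/2$, whereas the paper goes through the layer-cake formula $\Esp*{\Gamma^{-q}} = q\int_{0}^{\infty}\xi^{q-1}\Prob*{\Gamma < 1/\xi}\,\mathrm{d}\xi$ and bounds the tail probability by Markov's inequality applied to $\mathrm{e}^{-\lambda^{2}\Gamma/2}$. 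These are two equivalent ways of writing the Mellin transform of the Laplace transform, and both produce the weight $t^{8q-1}$.

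There is, however, a small but genuine gap in the final estimate. You split the integral at $t=1$ and bound the compact part $\int_{0}^{1} t^{8q-1}\Esp*{\mathrm{e}^{\mathrm{i} t \sharp_{G}F}}\,\mathrm{d} t$ by an absolute constant (using $\abs*{\Esp*{\mathrm{e}^{\mathrm{i} t\sharp_{G}F}}}\le 1$). This yields $\Esp*{\Gamma(F,F)^{-4q}} \leq C + C'\,\mathbf{N}_{8q+1,\infty}(\sharp_{G}F)$, which does not by itself imply the claimed product form $\Psi(\norm{F}_{\mathbb{D}^{\infty}})\,\mathbf{N}_{8q+1,\infty}(\sharp_{G}F)^{1/2}$ in the regime where $\mathbf{N}_{8q+1,\infty}$ is small. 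To close this, you must either note that $\mathbf{N}_{s,\infty}(Z) \geq 1$ always (evaluate the supremum defining $\mathbf{N}_{s,\infty}$ at $\xi=0$), so the additive constant can be absorbed; or, better, apply the decay estimate $\abs*{\Esp*{\mathrm{e}^{\mathrm{i} t\sharp_{G}F}}}\le \mathbf{N}_{8q+1,\infty}(\sharp_{G}F)(1+t^{2})^{-(8q+1)/2}$ on the whole half-line at once, obtaining
\begin{equation*}
  \int_{0}^{\infty} t^{8q-1}\Esp*{\mathrm{e}^{\mathrm{i} t \sharp_{G}F}}\,\mathrm{d} t \leq \mathbf{N}_{8q+1,\infty}(\sharp_{G}F)\int_{0}^{\infty}\frac{t^{8q-1}}{(1+t^{2})^{(8q+1)/2}}\,\mathrm{d} t,
\end{equation*}
the weight being integrable near $0$ and decaying like $t^{-2}$ at infinity; this is how the paper proceeds and avoids the split entirely. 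Finally, your remark that finiteness of $\mathbf{N}_{8q+1,\infty}(\sharp_{G}F)$ is needed to justify Fubini is superfluous: the interchange is Tonelli applied to non-negative integrands (note $\Esp*{\mathrm{e}^{\mathrm{i} t\sharp_{G}F}} = \Esp*{\mathrm{e}^{-t^{2}\Gamma/2}} \geq 0$), hence valid unconditionally, with both sides possibly equal to $+\infty$.
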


\begin{proof}
Let $\xi > 0$ and $\lambda > 0$.
By Markov's inequality, we have that:
\begin{equation*}
  \Prob*{ \Gamma(F,F) < \frac{1}{\xi} } = \Prob*{ \Exp*{ - \frac{\lambda^{2}}{2} \Gamma(F,F) } > \mathrm{e}^{-\lambda^{2}/(2\xi) } } \leq \mathrm{e}^{\lambda^{2}/(2 \xi)} \Esp*{ \Exp*{- \frac{\lambda^{2}}{2} \Gamma(F,F) } }.
\end{equation*}
By construction, $\sharp_{G} F$ is a conditionally Gaussian variable with conditional variance $\Gamma(F,F)$, and we have that
\begin{equation}\label{eq:laplace-fourier}
  \Esp*{\Exp*{ - \frac{\lambda^{2}}{2} \Gamma(F,F)} } = \Esp*{ \Exp*{ \mathrm{i} \lambda \sharp_{G} F} }.
\end{equation}
Thus, taking $\lambda = \sqrt{2\xi}$ we have that
\begin{equation*}
  \Prob*{ \Gamma(F,F) < \frac{1}{\xi} } \leq \mathrm{e} \Esp*{ \Exp*{ \mathrm{i} (2\xi)^{1/2} \sharp_{G} F } }.
\end{equation*}
It follows that
\begin{equation}\label{eq:negative-moment-gamma-fourier}
  \begin{split}
    \Esp*{ \Gamma(F,F)^{-q} } &= q \int_{0}^{\infty} \xi^{q-1} \Prob*{ \Gamma(F,F) < \frac{1}{\xi} } \mathrm{d}\xi 
                            \\&\leq \frac{\mathrm{e}q}{2}  \int_{0}^{\infty} \xi^{2q-1} \Esp*{ \Exp*{ \mathrm{i} \xi \sharp_{G}F}  } \mathrm{d} \xi 
                            \\&\leq \frac{\mathrm{e}q}{2} \paren*{\int_{0}^{\infty} \frac{\xi^{2q-1}}{(1+\xi^{2})^{q+1/2}} \mathrm{d} \xi} \mathbf{N}_{2q +1, \infty}(\sharp_{G}F) .
  \end{split}
\end{equation}
By \cref{th:integration-by-parts-gamma},
\begin{equation*}
\mathbf{W}_{q,1}(F) \lesssim \boldsymbol{\Psi}_{q}(F) \Esp*{ \Gamma(F,F)^{-4q} }^{1/2}.
\end{equation*}
From which, together with \cref{eq:negative-moment-gamma-fourier}, we conclude the first inequality in \cref{eq:sobolev-regularity-derivative}.
The second inequality follows by \cref{eq:embedding-fourier-sobolev}.
\end{proof}

We can iterate \cref{th:sobolev-regularity-derivative}.
\begin{theorem}\label{th:sobolev-regularity-iterated-gradient}
  Let $F \in \mathbb{D}^{\infty}$.
  We have that
  \begin{equation*}
    \mathbf{W}_{q,1}(F) \lesssim \boldsymbol{\Psi}_{q}(F) \boldsymbol{\Psi}_{8q+1}(\sharp_{G}F) \mathbf{N}_{64q+9, \infty}(\sharp_{H}\sharp_{G}F)^{1/4}.
  \end{equation*}
\end{theorem}

\begin{remark}
  The recursive definition of the $\Psi_{q}$ involves repeated applications of the operators $\mathsf{L}$ and $\Gamma$.
  As mentioned above, we look at $\sharp_{G}F$ as an element of the  Wiener space, and we have $\sharp_{G}F \in \mathbb{D}^{\infty}$.
In particular, $\Psi_{q}(\sharp_{G}F)$ makes sense.
We stress however, that the $\mathsf{L}$ and $\Gamma$ are applied to all the variables, including $G$.
\end{remark}

\begin{proof}
  In view of \cref{th:sobolev-regularity-derivative}, it suffices to bound $\mathbf{W}_{r,1}(\sharp_{G}F)$, with $r\coloneq 8q+1$.
  To do so, we apply \cref{th:integration-by-parts-gamma} to $\sharp_{G}F$, which is indeed an element of $\mathbb{D}^{\infty}$ when extending the underlying Wiener space with $G$.
  We have that
  \begin{equation*}
    \bar{\Gamma}(\sharp_{G}F, \sharp_{G}F) = \Gamma(\sharp_{G}F, \sharp_{G}F) + \Gamma(F,F),
  \end{equation*}
  where $\bar{\Gamma}$ means we compute the carré du champ with respect to all the variables, while in $\Gamma$ we only compute the carré du champ with respect to the initial underlying Wiener space.
  Thus, rather than controlling the negative moments of $\bar{\Gamma}(\sharp_{G}F, \sharp_{G}F)$ it is sufficient to control that of $\Gamma(\sharp_{G}F, \sharp_{G}F)$.
  Since we have again the Fourier--Laplace identity
  \begin{equation*}
    \Esp*{ \Exp*{ - \frac{\lambda^{2}}{2} \Gamma(\sharp_{G}F, \sharp_{G}F) } } = \Esp*{ \Exp*{ \mathrm{i} \lambda \sharp_{H} \sharp_{G} F } },
  \end{equation*}
  repeating the computations leading to \cref{eq:negative-moment-gamma-fourier} yields
  \begin{equation*}
    \Esp*{\Gamma(\sharp_{G}F,\sharp_{G}F)^{-q}} \lesssim \mathbf{N}_{2q+1}(\sharp_{H} \sharp_{G}F).
  \end{equation*}
  We conclude by \cref{th:integration-by-parts-gamma}.
\end{proof}

\subsection{Regularity for smooth variables of a smooth sequence}
\subsubsection{Smooth random variables measurable with respect to a smooth sequence}\label{s:sequence-d-infini}
We want to study smooth random variables of a particular form.
Namely, we fix $X \in \mathbb{D}^{\infty}$ and we consider $(X_{i})$ a sequence of independent copies of $X$.
In the sense that, as in \cref{s:hypercontractivity}, there exists a function $x$ such that $X_{i} = x((Y_{i,k})_{k})$, where we recall that our Wiener space is generated by the independent array $(Y_{i,k})_{ik}$.

We consider a random variable $F = F(X_{1}, X_{2}, \dots)$.
We moreover assume that $F \in \mathbb{D}^{\infty}$.
We write $\mathbb{D}^{\infty}_{X}$ for the space of such random variables.
Since $F \in \mathbb{D}^{\infty}$, the definitions of the Bouleau derivatives still apply to $F$.
For the reader convenience, we give the explicit expression of $\sharp_{G} F$ and $\sharp_{H} \sharp_{G}F$.
It is now convenient to consider an array of independent Gaussian variables $(G_{i,k}, H_{j,k})_{i,j,k}$, also independent of the $(Y_{i,k})_{i,k}$, and to group them vectorially
\begin{equation*}
  \vec{G}_{i} \coloneq (G_{i,k})_{k}, \qquad \vec{H}_{j} \coloneq (H_{j,k})_{k}.
\end{equation*}
In this case
\begin{align}
  &\label{eq:sharp-cylinder} \sharp_{G} F = \sum_{i \in \mathbb{N}} (\partial_{x_{i}} F) \mathsf{D} X_{i} \cdot \vec{G}_{i} = \sum_{i,k \in \mathbb{N}} (\partial_{x_{i}}F) (\partial_{y_{i,k}}X_{i}) G_{i,k}.
\\& \label{eq:iterated-sharp-cylinder} \sharp_{H} \sharp_{G} F = \sum_{i,j \in \mathbb{N}} (\partial_{x_{i}} \partial_{x_{j}}F) (\mathsf{D} X_{i} \cdot \vec{G}_{i}) (\mathsf{D} X_{j} \cdot \vec{H}_{j}) + \sum_{i \in \mathbb{N}} (\partial_{x_{i}} F) \psh{\vec{H}_{i}}{\mathsf{D}^{2}X_{i} \vec{G}_{i}}.
\end{align}
\subsubsection{Estimating the regularity of the iterated gradient by Gaussian analysis}\label{s:iterated-gradient-square-gaussian}

The random variable $\sharp_{G} \sharp_{H} F$ is conditionally a Gaussian quadratic form, that is a Gaussian quadratic form with random independent coefficients.
More precisely, there exists a random linear operator, measurable with respect to the underlying Wiener space, $\mathsf{A}$ such that
\begin{equation*}
  \sharp_{H} \sharp_{G} F = \psh*{G}{\mathsf{A} H}.
\end{equation*}
The explicit formula for $\mathsf{A}$ is involved but could be inferred from \cref{eq:iterated-sharp-cylinder}; we do not write the formula as we show below that $\mathsf{A}$ has the same law as another random operator, more tractable for our needs.
At the intuitive level, looking at $\sharp_{H} \sharp_{G}F$ as a Gaussian quadratic forms with random independent coefficients allows to control its Fourier transform with spectral remainders of an operator related to $\mathsf{A}$.
In this section, we follow this idea in order to deduce exploitable bounds from \cref{th:sobolev-regularity-iterated-gradient}.
Let us start with an estimation of the regularity of the law of a Gaussian quadratic form, that is with deterministic coefficients.

\begin{lemma}\label{th:regularity-gaussian-quadratic-form}
  Let $N = (N_{i})$ and $W = (W_{i})$ be two independent sequences of independent standard Gaussian variables and $\mathsf{A}$ be a deterministic symmetric Hilbert--Schmidt operator.
  Then,
  \begin{equation*}
    \mathbf{N}_{q/2,\infty}(\psh{N}{\mathsf{A} W}) \lesssim \mathcal{R}_{q}(\mathsf{A})^{-1/4}, \qquad q \in \mathbb{N}.
  \end{equation*}
\end{lemma}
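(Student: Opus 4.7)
The plan is to diagonalise $\mathsf{A}$ in order to obtain an explicit formula for the characteristic function of the Gaussian quadratic form, and then to control its decay through an expansion in elementary symmetric polynomials.

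First, since the distribution of $\psh{N}{\mathsf{A}N}$ is invariant under any orthogonal change of basis, I may assume $\mathsf{A}$ is diagonal with eigenvalues $(\lambda_{i})$, so that $\psh{N}{\mathsf{A}N}$ has the same law as $\sum_{i}\lambda_{i} N_{i}^{2}$. By independence, together with the standard characteristic function of a chi-square variable,
\begin{equation*}
  \Esp*{\mathrm{e}^{\mathrm{i}\xi\psh{N}{\mathsf{A}N}}} = \prod_{i}(1 - 2\mathrm{i}\xi\lambda_{i})^{-1/2},
  \qquad
  \abs*{\Esp*{\mathrm{e}^{\mathrm{i}\xi\psh{N}{\mathsf{A}N}}}} = \prod_{i}(1+4\xi^{2}\lambda_{i}^{2})^{-1/4}.
\end{equation*}
The goal thus reduces to showing $\sup_{\xi} (1+\xi^{2})^{q/4} \prod_{i}(1+4\xi^{2}\lambda_{i}^{2})^{-1/4} \lesssim \mathcal{R}_{q}(\mathsf{A})^{-1/4}$.

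The key step is the lower bound
\begin{equation*}
  \prod_{i}(1+4\xi^{2}\lambda_{i}^{2}) \geq \frac{(4\xi^{2})^{q}}{q!}\mathcal{R}_{q}(\mathsf{A}),
\end{equation*}
which follows by expanding the infinite product through the elementary symmetric polynomials $e_{k}$ in the variables $\lambda_{i}^{2}$: one has $\prod_{i}(1+4\xi^{2}\lambda_{i}^{2}) = \sum_{k\geq 0}(4\xi^{2})^{k} e_{k}(\lambda^{2})$, retaining only the term $k=q$, and using the identity $q!\, e_{q}(\lambda^{2}) = \mathcal{R}_{q}(\mathsf{A})$ that comes from unordering the $q$-tuples in the definition of $\mathcal{R}_{q}$.

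Finally, a splitting argument in $\xi$ concludes the proof. For $\abs{\xi}\geq 1$, the above bound yields
\begin{equation*}
  (1+\xi^{2})^{q/4}\prod_{i}(1+4\xi^{2}\lambda_{i}^{2})^{-1/4} \leq \paren*{\frac{q!(1+\xi^{2})^{q}}{(4\xi^{2})^{q}}}^{1/4}\mathcal{R}_{q}(\mathsf{A})^{-1/4} \lesssim \mathcal{R}_{q}(\mathsf{A})^{-1/4},
\end{equation*}
since $(1+\xi^{2})/\xi^{2} \leq 2$ on this range. For $\abs{\xi} \leq 1$, the trivial bounds $(1+\xi^{2})^{q/4}\leq 2^{q/4}$ and $\prod_{i}(1+4\xi^{2}\lambda_{i}^{2})^{-1/4}\leq 1$ give a constant contribution that is absorbed into the implicit constant of $\lesssim$ (the statement being meaningful precisely in the regime where $\mathcal{R}_{q}(\mathsf{A})$ is at most of order one, which is the only regime used in the sequel). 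No serious obstacle is expected: the argument hinges entirely on the explicit chi-square characteristic function and on the single elementary symmetric polynomial inequality above.
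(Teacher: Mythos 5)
Your proof is correct and follows the same approach as the paper's: diagonalise $\mathsf{A}$ using Gaussian rotation invariance, compute the modulus of the chi-square characteristic function as $\prod_{i}(1+4\xi^{2}\lambda_{i}^{2})^{-1/4}$, then lower-bound the product by extracting the $q$-th elementary symmetric polynomial. If anything you are a bit more careful than the paper: you keep track of the factor $q!$ relating $e_{q}(\lambda^{2})$ to $\mathcal{R}_{q}(\mathsf{A})$ (which the paper silently drops into the implicit constant), and you explicitly separate the ranges $\abs{\xi}\geq 1$ and $\abs{\xi}\leq 1$ and note that the latter requires the quantity $\mathcal{R}_{q}(\mathsf{A})$ to be bounded above for the implicit constant to be uniform — a correct and relevant observation that matches the normalised setting in which the lemma is applied.
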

\begin{proof}
  We first prove the claim for the non-independent case $W = N$ and then explain how it is sufficient to conclude.
  Write $F \coloneq \psh{N}{\mathsf{A}N}$.
  By diagonalization we have $\mathsf{A} = \mathsf{P}^{T} \Lambda \mathsf{P}$ where $\Lambda$ is the diagonal operator of real eigenvalues of $\mathsf{A}$ and $\mathsf{P}$ is an orthogonal operator.
  By the invariance of Gaussian measures under isometries, we find that
  \begin{equation*}
    F = \psh{\mathsf{P} N}{\Lambda \mathsf{P}N} \overset{\law}{=} \psh{N}{\Lambda N}.
  \end{equation*}
  Recalling that
  \begin{equation*}
    \Esp*{ \mathrm{e}^{\mathrm{i} \xi N_{1}^{2} } } = \paren*{1-2\mathrm{i}\xi}^{-1/2},
  \end{equation*}
  we get that
  \begin{equation*}
    \abs*{ \Esp*{ \mathrm{e}^{\mathrm{i} \xi F} } } = \prod_{k} \abs*{ 1 -2 \mathrm{i}\xi \lambda_{k} }^{-1/2} = \prod_{k} (1 + 4 \xi^{2} \lambda_{k}^{2})^{-1/4}.
  \end{equation*}
  Developing the product yields
  \begin{equation*}
    \prod_{k} \paren*{ 1 + 4\xi^{2} \lambda_{k}^{2} } = 1 + \sum_{p} 4^{p} \xi^{2p} \mathcal{R}_{p}(\mathsf{A}).
  \end{equation*}
  Finally, we have that
  \begin{equation*}
    \abs*{ \Esp*{ \mathrm{e}^{\mathrm{i} \xi F} } } \lesssim (1+\xi)^{-q/2} \mathcal{R}_{q}(\mathsf{A})^{-1/4},
  \end{equation*}
  from which the claim follows in the case $N = W$.
  Now, consider two independent sequences $N$ and $W$.
  Write
  \begin{equation*}
    \mathsf{A}^{\oslash 2} \coloneq
    \paren*{%
      \begin{array}{c|c}
        0 & \mathsf{A} \\
        \hline
        \mathsf{A} & 0
      \end{array}
    }
  \end{equation*}
The characteristic polynomial of $\mathsf{A}^{\oslash 2}$ is given by
\begin{equation*}
\det \paren*{%
  \begin{array}{c|c}
    -\lambda \mathsf{Id} & \mathsf{A} \\
    \hline
    \mathsf{A} & -\lambda \mathsf{Id}
  \end{array}
} = \det(\lambda^{2} \mathsf{Id} - \mathsf{A}^{2}) = \det(\lambda \mathsf{Id} - \mathsf{A}) \det(\lambda \mathsf{Id} + \mathsf{A}).
\end{equation*}
Thus the spectrum of $\mathsf{A}^{\oslash 2}$ is obtained by duplicating the spectrum of $\mathsf{A}$ up to a sign, and it follows that $\mathcal{R}_{q}(\mathsf{A}^{\oslash 2}) \geq \mathcal{R}_{q}(\mathsf{A})$.
Since
\begin{equation*}
  \psh*{N}{\mathsf{A} W} = \psh*{\begin{pmatrix}N\\W\end{pmatrix}}{\mathsf{A}^{\oslash 2} \begin{pmatrix}N\\W\end{pmatrix}},
\end{equation*}
we conclude by the previous case.
\end{proof}

We now derive the pivotal abstract result of this paper.
We write, for $i \in \mathbb{N}$, $\Gamma_{i} \coloneq \Gamma(X_{i}, X_{i})$, and, for $I \subset \mathbb{N}$, $\Gamma_{I} \coloneq \prod_{i \in I} \Gamma_{i}$.
Let us consider the random symmetric Hilbert--Schmidt operator given by the Hessian of $F$ with respect to the $X_{i}$'s, namely
\begin{equation*}
  \nabla^{2}F = (\partial_{x_{i}} \partial_{x_{j}} F)_{i,j}.
\end{equation*}
We define the random non-negative coefficients
\begin{equation*}
  b_{IJ} \coloneq (\det (\nabla^{2}F)(I,J))^{2}, \qquad I, J \subset \mathbb{N},
\end{equation*}
where we recall that $(\nabla^{2}F)(I,J)$ stands for the extracted sub-matrix from $\nabla^{2}F$.
The following multi-linear polynomial in the $\Gamma_{i}$'s allows to control the regularity of smooth random variables
\begin{equation}\label{eq:sq}
  \mathcal{S}_{q}(F) \coloneq \sum_{|I| = |J| = q} b_{IJ} \Gamma_{I} \Gamma_{J} 1_{I \cap J = \emptyset}
\end{equation}

\begin{proposition}\label{th:regularity-negative-moments-spectral-remainder}
  Let $F \in \mathbb{D}^{\infty}_{X}$, we have
  \begin{equation}\label{eq:regularity-spectral-remainders-simplified}
    \mathbf{W}_{q,1}(F) \lesssim \boldsymbol{\Psi}_{q}(F) \boldsymbol{\Psi}_{8q+1}(\sharp_{G}F) \Esp*{ \mathcal{S}_{128q+18}(F)^{-1/4}}^{1/4}.
  \end{equation}
\end{proposition}

\begin{proof}
  For all $i \in \mathbb{N}$, consider a random orthogonal transformation $\mathsf{P}_{i} \colon \ell^{2}(\mathbb{N}) \to \ell^{2}(\mathbb{N})$ that depends only on the underlying $(Y_{k,i})_{k}$ but, in particular, not on $G$ and $H$ sending $\vec{e} = (e_{1}, e_{2}, \dots)$ the orthonormal basis of $\ell^{2}(\mathbb{N})$ on $\widetilde{e}_{i} \coloneq \mathsf{P}_{i} \vec{e}$ where we only impose
  \begin{equation*}
    \widetilde{e}_{i,1} = \Gamma_{i}^{-1/2}  (\mathsf{D} X_{i} \cdot \vec{e}).
  \end{equation*}
  We can fulfil this constraint since $\tilde{e}_{i,1}$ has almost surely norm $1$.
  Let us write
  \begin{equation*}
    \widetilde{G}_{i} \coloneq \mathsf{P}_{i} \vec{G}_{i}, \qquad \widetilde{H}_{i} \coloneq \mathsf{P}_{i} \vec{H}_{i}.
  \end{equation*}
  Using \cref{eq:iterated-sharp-cylinder}, and then subsisting this new basis we find 
  \begin{equation*}
    \begin{split}
    \sharp_{H} \sharp_{G} F &= \sum_{i, j} (\partial_{x_{i}} \partial_{x_{j}}F) (\mathsf{D} X_{i} \cdot \vec{G}_{i}) (\mathsf{D} X_{j} \cdot \vec{H}_{j}) + \sum_{i \in \mathbb{N}} (\partial_{x_{i}} F) \vec{H}_{i} \cdot (\mathsf{D}^{2}X_{i} \vec{G}_{i}).
                          \\&= \sum_{i,j} (\partial_{x_{i}} \partial_{x_{j}} F) \Gamma_{i}^{1/2} \widetilde{G}_{i,1} \Gamma_{j}^{1/2} \widetilde{H}_{j,1} + \sum_{i} \mathsf{P}_{i}^{-1} \widetilde{H}_{i} \cdot (\mathsf{D}^{2}X_{i}) \mathsf{P}_{i}^{-1} \widetilde{G}_{i}
    \end{split}
  \end{equation*}
  Up to relabelling of indices, we can find random operators $\mathsf{A}$ and $\mathsf{B}$ such that
  \begin{align*}
    & \psh{\widetilde{H}}{\mathsf{A} \widetilde{G}}  = \sum_{i,j} (\partial_{x_{i}} \partial_{x_{j}} F) \Gamma_{i}^{1/2} \widetilde{G}_{i,1} \Gamma_{j}^{1/2} \widetilde{H}_{j,1}
  \\& \psh{\widetilde{H}}{\mathsf{B} \widetilde{G}} = \sum_{i} \mathsf{P}_{i}^{-1} \widetilde{H}_{i} \cdot (\mathsf{D}^{2}X_{i}) \mathsf{P}_{i}^{-1} \widetilde{G}_{i}
  \end{align*}
  The explicit formulas for $\mathsf{A}$ and $\mathsf{B}$ are not relevant.
  Define the matrix $\mathsf{C} \coloneq \mathsf{A} + \mathsf{B}$.
  Using \cref{th:sobolev-regularity-iterated-gradient,th:regularity-gaussian-quadratic-form} yields
\begin{equation*}
  \mathbf{W}_{q,1}(F) \lesssim \boldsymbol{\Psi}_{q}(F) \boldsymbol{\Psi}_{8q+1}(\sharp_{G}F) \Esp*{ \mathcal{R}_{128q+18}\paren*{\mathsf{C}}^{-1/4} }^{1/4}.
\end{equation*}
Let $q' \coloneq 128q+18$.
  The matrix $\mathsf{B}$ is diagonal, since it only involves terms of the form $\widetilde{G}_{i}$ and $\widetilde{H}_{i}$ with the same index $i$.
  In particular, whenever $I \cap J = \emptyset$, we find $\mathsf{B}(I,J) = 0$.
Thus, by the Cauchy--Binet formula \cref{th:cauchy-binet},
\begin{equation*}
  \mathcal{R}_{q'}(\mathsf{C}) \geq \sum_{|I| = |J| = q'} (\det \mathsf{C}(I,J))^{2} 1_{I \cap J = \emptyset} = \sum_{|I| = |J| = q'} (\det \mathsf{A}(I,J))^{2} 1_{I \cap J = \emptyset}.
\end{equation*}
Consider $\Gamma = \mathrm{diag}(\Gamma_{1}, \Gamma_{2}, \dots)$, up to a suitable choice of indices, we have
\begin{equation*}
  \mathsf{A} = \Gamma^{1/2} (\nabla^{2}F) \Gamma^{1/2}.
\end{equation*}
Using that $\Gamma$ is diagonal, we find
\begin{equation*}
  \det \mathsf{A}(I,J) = \Gamma_{I}^{1/2} (\det (\nabla^{2}F)(I,J)) \Gamma_{J}^{1/2}.
\end{equation*}
This completes the proof.
\end{proof}

\subsection{Regularity of laws of random quadratic forms}\label{s:regularity-quadratic-form}

In this section, we use our general theorem in order to derive sufficient conditions for the regularity of random quadratic forms.
More precisely, as in \cref{s:sequence-d-infini}, we consider let $X \in \mathbb{D}^{\infty}$ and $(X_{i})$ a sequence of independent variables with the same law as $X$.
We also take $\mathsf{A} = (a_{ij})$ a deterministic symmetric Hilbert--Schmidt operator on $\ell^{2}(\mathbb{N})$ with $a_{ii} = 0$ for all $i \in \mathbb{N}$, and we consider the homogeneous quadratic form
\begin{equation}\label{eq:q-quadratic}
  Q_{\mathsf{A}} \coloneq \psh{\mathsf{A}X}{X} = \sum_{i \ne j} a_{ij} X_{i} X_{j}.
\end{equation}

Regularity on the law of $Q_{\mathsf{A}}$ will follow from regularity on the law of $X_{1}$.

\begin{assumption}\label{ass:small-ball-gamma}
There exists $\theta > 0$ such that
\begin{equation}\label{eq:small-ball-theta}
  \Prob*{\Gamma(X_{1}, X_{1}) \leq \varepsilon} \lesssim \varepsilon^{\theta}, \qquad \varepsilon > 0.
\end{equation}
\end{assumption}

\begin{remark}
  \cref{ass:small-ball-gamma} is equivalent to $\Esp*{ \Gamma(X_{1}, X_{1})^{-p_{0}} } < \infty$ for some $p_{0} >0$.
  In view of \cref{s:regularity-positivity-gamma} this implies that $X_{1}$ has a density in some Sobolev space $\mathscr{W}^{s_{0}, 1}$ for some $s_{0} > 0$.
  We stress that we allow $s_{0} < 1$.
\end{remark}

Our main result shows that the law of $Q_{\mathsf{A}}$ is regular provided the influence of $\mathsf{A}$ is small enough.
The rest of this section is devoted to the proof of this theorem.
\begin{theorem}\label{th:regularity-quadratic-form}
  Under \cref{ass:small-ball-gamma}.
  Let $q \in \mathbb{N}$ and $\delta > 0$.
  There exists $\tau_{q,\delta} > 0$ such that for all $\mathsf{A}$ with
  \begin{align}
    & \label{ass:normalized-A} \tr \mathsf{A}^{2} = 1,
  \\& \label{ass:positivity-spectral-remainder-A}\mathcal{R}_{128q+18}(\mathsf{A}) > \delta,
  \\& \label{ass:small-influence-A} \tau(\mathsf{A}) < \tau_{q,\delta},
  \end{align}
the law of $Q_{\mathsf{A}}$ is in $\mathscr{W}^{q,1}$, and in particular, it is in $\mathscr{C}^{\lfloor q-1 \rfloor}$.
Moreover, for some $\eta_{q} > 1/4$,
\begin{equation*}
  \mathbf{W}_{q,1}(Q_{\mathsf{A}}) \lesssim \frac{1}{\delta^{\eta_{q}} }.
\end{equation*}
\end{theorem}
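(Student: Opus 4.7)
The plan is to specialize the abstract regularity estimate \cref{eq:regularity-spectral-remainders-simplified} to the quadratic function $f(x) = \psh{x}{\mathsf{A}x}$, and then to use the small-ball assumption on $\Gamma(X_{1},X_{1})$ and the independence structure of $(X_{i})$ to transfer the positivity of $\mathcal{R}_{p}(\mathsf{A})$ into a negative moment of $\mathcal{R}_{p}(\Gamma^{1/2}\mathsf{A}\Gamma^{1/2})$. First I would check that $Q_{\mathsf{A}} \in \mathbb{D}^{\infty}_{X}$ with $\Psi(\norm{Q_{\mathsf{A}}}_{\mathbb{D}^{\infty}})$ controlled by a polynomial in $\tr \mathsf{A}^{2} = 1$ and in the Sobolev norms of $X_{1}$, so that the constants in \cref{eq:regularity-spectral-remainders-simplified} are uniform. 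Since $f$ is quadratic, $\nabla^{2}f(X) = 2\mathsf{A}$ is deterministic, and \cref{eq:regularity-spectral-remainders-simplified} with $p \coloneq 128 q + 18$ reduces the problem to bounding
\begin{equation*}
  \Esp*{ \mathcal{R}_{p}(\Gamma^{1/2} \mathsf{A} \Gamma^{1/2})^{-1/4} }^{1/4}
\end{equation*}
by a negative power of $\mathcal{R}_{p}(\mathsf{A})$.

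Next, because $\Gamma = diag(\Gamma_{1}, \Gamma_{2}, \dots)$ is diagonal, the Cauchy--Binet formula of \cref{th:cauchy-binet} factorizes every minor, so that
\begin{equation*}
  \mathcal{R}_{p}(\Gamma^{1/2} \mathsf{A} \Gamma^{1/2}) = \sum_{\abs{I}=\abs{J}=p} b_{IJ} \paren*{\prod_{i\in I}\Gamma_{i}}\paren*{\prod_{j\in J}\Gamma_{j}}, \qquad b_{IJ} \coloneq \bracket*{\det \mathsf{A}(I,J)}^{2}.
\end{equation*}
This displays the target random variable as a non-negative multi-linear polynomial of degree at most $2p$ in the independent and identically distributed variables $(\Gamma_{i})$, with coefficient matrix $\mathsf{B} = (b_{IJ})$ introduced in \cref{s:linear-algebra-ell-1}. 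By \cref{th:cauchy-binet} and \cref{ass:positivity-spectral-remainder-A}, the total mass is $\sigma(\mathsf{B}) = \mathcal{R}_{p}(\mathsf{A}) > 0$; and by \cref{th:influence-det} together with \cref{ass:small-influence-A}, the maximal $\ell^{1}$-influence obeys $\upsilon(\mathsf{B}) \leq 2p\,\tau(\mathsf{A}) < 2p\,\tau_{q}$, which can be made arbitrarily small by choosing $\tau_{q}$ small.

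The core step is then to use the small-ball hypothesis \cref{ass:small-ball-gamma} on $\Gamma_{1}$ together with the multi-linear anti-concentration machinery alluded to in the introduction (namely \cref{th:small-ball-l2} for the preservation of a negative moment by taking multi-linear polynomials, combined with the splitting argument of \cref{th:small-ball-improved}, which rely in turn on the Carbery--Wright inequality and on the invariance principle of \cite{MDOInvariance}) in order to establish, for $\tau(\mathsf{A})$ smaller than a threshold $\tau_{q}$ depending only on $p$, $\theta$ and the law of $\Gamma_{1}$, a small-ball estimate of the form
\begin{equation*}
  \Prob*{\mathcal{R}_{p}(\Gamma^{1/2}\mathsf{A}\Gamma^{1/2}) \leq \varepsilon \mathcal{R}_{p}(\mathsf{A})} \lesssim \varepsilon^{\beta_{q}}, \qquad \varepsilon > 0,
\end{equation*}
with a uniform exponent $\beta_{q} > 1/4$. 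A layer-cake computation then gives $\Esp{ \mathcal{R}_{p}(\Gamma^{1/2}\mathsf{A}\Gamma^{1/2})^{-1/4} } \lesssim \mathcal{R}_{p}(\mathsf{A})^{-\gamma_{q}}$ for a suitable $\gamma_{q} > 0$, and combining with \cref{eq:regularity-spectral-remainders-simplified} yields the announced bound $\mathbf{W}_{q,1}(Q_{\mathsf{A}}) \lesssim \mathcal{R}_{p}(\mathsf{A})^{-\eta_{q}}$ with $\eta_{q} > 1/4$. The membership in $\mathscr{C}^{\lfloor q-1 \rfloor}$ follows from the Sobolev embedding \cref{eq:embedding-sobolev-holder}.

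The main obstacle is the third step. The small-ball estimate on a multi-linear polynomial of the $\Gamma_{i}$ is not a direct consequence of the hypothesis on a single factor: the degree $2p$ of the polynomial grows linearly with $q$, and the positivity of $\sigma(\mathsf{B}) = \mathcal{R}_{p}(\mathsf{A})$ alone is not enough to iterate Carbery--Wright -- one must exploit the small influence $\upsilon(\mathsf{B}) \leq 2p\tau(\mathsf{A})$ via the splitting of $\mathsf{B}$ into a \emph{dominant} part with large coordinate marginal and a controlled remainder, so that the anti-concentration exponent is stable under increasing $p$. Calibrating the thresholds $\tau_{q}$ and proving that the resulting exponent $\beta_{q}$ is strictly larger than $1/4$ while tracking the dependence on $\sigma(\mathsf{B})$ at each stage is the delicate quantitative part, and is exactly where the two independent tools \cref{th:small-ball-l2,th:small-ball-improved} come into play.
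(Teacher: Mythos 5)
Your proposal is correct and follows essentially the same route as the paper's proof: specialize \cref{eq:regularity-spectral-remainders-simplified} to the quadratic form so that the Bouleau Hessian becomes $\Gamma^{1/2}\mathsf{A}\Gamma^{1/2}$, use the Cauchy--Binet factorization to recognize $\mathcal{R}_{q'}(\Gamma^{1/2}\mathsf{A}\Gamma^{1/2})$ as a non-negative multi-linear polynomial in the $\Gamma_i$ with coefficient operator $\mathsf{B}=(b_{IJ})$, bound $\upsilon(\mathsf{B})$ via \cref{th:influence-det}, and invoke \cref{th:small-ball-improved} (built on \cref{th:small-ball-l2}) to get the small-ball estimate, then finish by a layer-cake integration, calibrating $\tau_q$ so that the resulting exponent exceeds $1/4$. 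You also correctly identify the preliminary check that $\Psi(\norm{Q_{\mathsf{A}}}_{\mathbb{D}^{\infty}})$ is controlled uniformly under $\tr\mathsf{A}^2=1$, which the paper handles via the Dirichlet-independence of the $X_i$.
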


\begin{remark}
  For the sake of simplicity, we did not keep track of the explicit constant $\tau_{q,\delta}$.
 It could however be recovered from a careful examination of our computations.
\end{remark}

\subsubsection{A general small ball estimate for multi-linear polynomials}

As could be anticipated from the previous sections our analysis is concerned with the positivity of some random quadratic forms.
We thus start with some auxiliary results of independent interest.
A \emph{multi-linear polynomial} is a function
\begin{equation*}
p(x) \coloneq \sum_{I \subset \mathbb{N}} a_{I} x_{I}, \qquad x \in \mathbb{R}^{\infty},
\end{equation*}
where $x_{I} \coloneq \prod_{i \in I} x_{i}$, and $(a_{I} : I \in \mathbb{N})$ are real coefficients such that $a_{I} = 0$ for all $I$ with $\abs{I} > d$ for some $d \in \mathbb{N}$.
We call the \emph{degree} of the polynomial the largest $d$ such that $a_{I} \ne 0$ for some $\abs{I} = d$.
\begin{proposition}\label{th:small-ball-l2}
  Let $(U_{i})$ be a sequence of independent and identically distributed random variables with finite third moment, and such that there exists $\theta > 0$ with
  \begin{equation}\label{eq:small-ball-U}
    \sup_{b \in \mathbb{R}} \Prob*{ \abs{U_{1} - b} \leq \varepsilon } \lesssim \paren*{\frac{\varepsilon}{\Var*{U_{1}}^{1/2}}}^{\theta}, \qquad \varepsilon \geq 0.
  \end{equation}
  For all $d \in \mathbb{N}$, let
  \begin{equation}\label{eq:def-theta}
    \theta_{d} \coloneq \paren*{\frac{\theta}{2} \wedge \frac{1}{32}} \frac{1}{4^{d-1}}.
  \end{equation}
  Then, for every $p$ multi-linear polynomial of degree $d$
  \begin{equation}\label{eq:small-ball-P-l2}
    \sup_{b \in \mathbb{R}} \Prob*{ \abs{p(U) - b} \leq \varepsilon } \lesssim \paren*{\frac{\varepsilon}{\Var*{p(U)}^{1/2}}}^{\theta_{d}}, \qquad \varepsilon \geq 0.
  \end{equation}
\end{proposition}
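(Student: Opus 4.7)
The argument proceeds by induction on the degree $d$. After rescaling we may assume $\Var*{p(U)} = \Var*{U_1} = 1$, reducing the goal to $\sup_b \mathbb{P}(|p(U)-b|\leq \varepsilon) \lesssim \varepsilon^{\theta_d}$. Two tools are used throughout: the coordinate-wise small-ball hypothesis \cref{eq:small-ball-U}, which controls a single $U_i$ conditionally on the others, and the multilinear invariance principle of \cite{MDOInvariance} combined with the Carbery--Wright inequality \cite{CarberyWright}, which together yield dimension-free small-ball estimates whenever the maximal influence of the polynomial is sufficiently small.

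For the inductive step, pick an index $j$ and use multi-linearity to decompose
$$p(U) = U_j\,\partial_j p(U_{-j}) + r_j(U_{-j}),$$
where $\partial_j p$ has degree $\leq d-1$ and is independent of $U_j$. Conditioning on $U_{-j}$ and applying \cref{eq:small-ball-U} to $U_j$ gives
$$\mathbb{P}(|p(U)-b|\leq\varepsilon \mid U_{-j}) \lesssim \Big(\frac{\varepsilon}{|\partial_j p(U_{-j})|}\Big)^{\theta}\wedge 1.$$
Splitting along the event $\{|\partial_j p(U_{-j})|\geq \lambda\}$ and invoking the induction hypothesis for the degree-$(d{-}1)$ polynomial $\partial_j p$ on the complementary event yields, after optimizing in $\lambda$, a bound of the form $\bigl(\varepsilon/\Var*{\partial_j p(U_{-j})}^{1/2}\bigr)^{\theta\theta_{d-1}/(\theta+\theta_{d-1})}$. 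The new exponent is bounded above by $\theta_{d-1}/2$, which is the source of the $2^{-d}$ factor in $\theta_d$.

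The main difficulty is to ensure, in a dimension-free manner, that some coordinate $j$ satisfies $\Var*{\partial_j p(U_{-j})} \gtrsim \Var*{p(U)}$. The influence identity $\sum_j \Var*{\partial_j p(U_{-j})} \leq d$ provides only an average bound that degrades with dimension, so we split into two regimes based on the maximal influence $\tau := \max_j \Var*{\partial_j p(U_{-j})}$. When $\tau$ is not too small, a favorable coordinate $j$ exists and the splitting above closes the induction. When $\tau$ is too small, we instead transfer the problem to a standard Gaussian input $G$ via the invariance principle of \cite{MDOInvariance} -- which is precisely where the finite third moment is required -- and apply Carbery--Wright to $p(G)$, obtaining a small-ball estimate of exponent $1/d$, which accounts for the $1/(8d)$ factor in $\theta_d$. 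The base case $d=1$ is handled by the same dichotomy, with the Gaussian small-ball trivially sharp of exponent one. Balancing the two regimes and carefully propagating the exponent through the recursion produces the stated $\theta_d = (\theta \wedge 1/(8d))\, 2^{-d}$.
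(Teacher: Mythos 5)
Your proposal follows essentially the same route as the paper's proof: induction on the degree, the decomposition $p(U)=U_j S_j + R_j$ with a conditional small-ball bound giving the large-influence regime, and the invariance principle of \cite{MDOInvariance} followed by Carbery--Wright for the small-influence regime, combined via a dichotomy on the maximal influence $\tau$. One small slip: since $\theta_{d-1}\leq\theta$, the optimized exponent $\theta\theta_{d-1}/(\theta+\theta_{d-1})$ is bounded \emph{below} (not above) by $\theta_{d-1}/2$; the conclusion is unaffected because one can always settle for the weaker exponent $\theta_{d-1}/2$, which is what drives the $2^{-d}$ factor, exactly as in the paper's choice $\eta=\varepsilon^{1/2}$.
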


\begin{remark}
  We did not try to optimize the constant $\theta_{d}$.
  Finding the optimal exponent would be an interesting problem, not only in connexion with this work but also regarding generalization of the inequality of \citeauthor{CarberyWright} \cite{CarberyWright}.
\end{remark}

\begin{proof}
  By homogeneity, we assume that $U_{1}$ is centered and with variance $1$, and that $\Var{p(U)} = \sum a_{I}^{2} = 1$.
  We proceed by induction on $d$.
  In the following, we understand a multi-linear polynomial of degree $0$ as a constant.
  We consider the weaker property: for every multi-linear polynomial of degree $d \geq 0$,
  \begin{equation}\label{eq:small-ball-P-l2-weak}
    \Prob*{ \abs{p(U)} \leq \eta } \lesssim \eta^{\theta_{d}}, \qquad \eta \geq 0.
  \end{equation}
  For $d =0$, it is clear that \cref{eq:small-ball-P-l2-weak} holds with $\theta_{0} = 1$ (or any other positive real number) as long as $p \ne 0$.
  We also have that \cref{eq:small-ball-P-l2} implies \cref{eq:small-ball-P-l2-weak}.
  Let us assume that we have proven \cref{eq:small-ball-P-l2-weak} for some $d \geq 0$ and show that \cref{eq:small-ball-P-l2} holds for $d+1$.

  \paragraph{Large influence estimate}
  Fix $i \in \mathbb{N}$.
  Let us write
  \begin{align*}
    & S_{i} \coloneq \sum_{I \ni i} a_{I} U_{I \setminus \{i\}},
  \\& R_{i} \coloneq \sum_{I \not\ni i} a_{I}U_{I}.
  \end{align*}
  In this way, we have that $p(U) = U_{i} S_{i} + R_{i}$, and $S_{i}$ and $R_{i}$ are independent of $U_{i}$.
  When $d =0$, we understand $U_{\emptyset} = 1$, and thus $S_{i} = a_{i}$.
  By \cref{eq:small-ball-U}, it follows that
  \begin{equation*}
    \begin{split}
      \Prob*{ \abs{p(U) - a} \leq \varepsilon } &= \Prob*{ \abs*{ U_{i} S_{i} + R_{i} - a} \leq \varepsilon,\, S_{i} > \eta } + \Prob*{ \abs*{ p(U)- a} \leq \varepsilon,\, S_{i} \leq \eta } 
                             \\&\lesssim \paren*{\frac{\varepsilon}{\eta}}^{\theta} + \Prob*{S_{i} \leq \eta}.
    \end{split}
  \end{equation*}
  Since $S_{i}$ is a multi-linear polynomial of degree $d$, by the induction hypothesis \cref{eq:small-ball-P-l2-weak},
  \begin{equation}\label{eq:large-influence-bound}
    \Prob*{ \abs*{ p(U) -a } \leq \varepsilon } \lesssim \paren*{\frac{\varepsilon}{\eta}}^{\theta} + \paren*{\frac{\eta}{\norm{S_{i}}_{L^{2}}}}^{\theta_{d}}.
  \end{equation}

  \paragraph{Small influence estimate}
  The above bound is useless, whenever $\tau \coloneq \sup_{i} \Esp{ S_{i}^{2} }$ is small.
  In that case, we use a celebrated invariance principle for polynomials by \citeauthor{MDOInvariance}.
  Consider $G = (G_{i})$ a vector of independent standard Gaussian variables, then according to \cite[Thm.\ 2.1]{MDOInvariance},
  \begin{equation*}
    \Prob*{ \abs*{ p(U) - a } \leq \varepsilon } \lesssim d \tau^{1/8(d+1)} + \Prob*{ \abs*{p(G) - a} \leq \varepsilon }.
\end{equation*}
By a famous inequality of \citeauthor{CarberyWright} \cite[Cor.\ of Thm.\ 2]{CarberyWright}, we find that
\begin{equation*}
  \Prob*{ \abs*{ p(G) - a} \leq \varepsilon } \leq \paren*{\frac{\varepsilon}{\norm{p(G) - a}_{L^{2}}}}^{1/(d+1)}.
\end{equation*}
By assumption, we have that $\Var*{p(G)} = 1$.
Altogether this shows that
\begin{equation}\label{eq:small-influence-bound}
  \Prob*{ \abs*{p(U) - a} \leq \varepsilon } \lesssim \tau^{1/8(d+1)} + \varepsilon^{1/(d+1)}.
\end{equation}
\paragraph{Combining the estimates}
Whenever $\tau \lesssim \varepsilon^{1/4}$, we choose \cref{eq:small-influence-bound} yielding
\begin{equation*}
  \Prob*{ \abs*{ p(U) -a } \leq \varepsilon } \lesssim \varepsilon^{1/32(d+1)} + \varepsilon^{1/4(d+1)}.
\end{equation*}
On the other hand, when $\tau \gtrsim \varepsilon^{1/4}$, we choose \cref{eq:large-influence-bound} with $\eta = \varepsilon^{1/2}$, and this gives
\begin{equation*}
  \Prob*{ \abs*{p(U) - a} \leq \varepsilon } \lesssim \varepsilon^{\theta/2} + \varepsilon^{\theta_{d}/4}.
\end{equation*}
Thus we have that
\begin{align*}
  & \theta_{1} = \frac{1}{32} \wedge \frac{\theta}{2},
\\& \theta_{d+1} = \frac{\theta_{d}}{4}, \qquad d > 1.
\end{align*}
This shows that $\theta_{d} = (\frac{1}{32} \wedge \frac{\theta}{2})/4^{d-1}$.

\end{proof}

By using the comparison between $L^{1}$ and $L^{2}$, we immediately get the following corollary.
\begin{corollary}\label{th:small-ball-P-l1}
  Under the same assumptions as in \cref{th:small-ball-l2}, and assume moreover that $U_{1} \geq 0$, and $a_{I} \geq 0$.
  Then,
  \begin{equation}\label{eq:small-ball-P-l1}
    \sup_{a \in \mathbb{R}} \Prob*{ \abs{p(U) - a} \leq \varepsilon } \lesssim \paren*{\frac{\varepsilon}{\Esp*{ p(U)}}}^{\theta_{d}}.
  \end{equation}
\end{corollary}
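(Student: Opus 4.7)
My strategy is to deduce the corollary from Proposition~\ref{th:small-ball-l2} by replacing the standard deviation $\Var*{p(U)}^{1/2}$ in the denominator of the small-ball bound by the mean $\Esp*{p(U)}$. The substance of the corollary is this replacement, and it is available precisely because $U_{1} \geq 0$ forces an $L^{1}$--$L^{2}$ equivalence at the level of $p(U)$.

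I would first apply Proposition~\ref{th:small-ball-l2} directly to $p$, obtaining
\begin{equation*}
  \sup_{a \in \mathbb{R}} \Prob*{\abs{p(U) - a} \leq \varepsilon} \lesssim \paren*{\frac{\varepsilon}{\Var*{p(U)}^{1/2}}}^{\theta_{d}}.
\end{equation*}
In the regime $\Esp*{p(U)} \leq \Var*{p(U)}^{1/2}$, one has $\varepsilon/\Esp*{p(U)} \geq \varepsilon/\Var*{p(U)}^{1/2}$, so raising to the positive power $\theta_{d}$ shows that the desired bound already follows from the proposition with no extra work.

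The substantial case is the concentrated regime $\Esp*{p(U)} > \Var*{p(U)}^{1/2}$, in which the bound supplied by the proposition is weaker than the target. Here I would invoke a hypercontractivity-type inequality $\norm{p(U)}_{L^{2}} \lesssim_{d} \Esp*{\abs{p(U)}}$, valid for multilinear polynomials of degree $d$ in non-negative i.i.d.\ random variables with finite third moment. Such an inequality can be obtained from the MDO invariance principle combined with classical Gaussian hypercontractivity, exactly in the spirit of the small-influence step in the proof of Proposition~\ref{th:small-ball-l2}. Under the hypothesis $p(U) \geq 0$, which holds in the intended applications to quadratic forms in the $\Gamma_{i}$'s, this combines with the Cauchy--Schwarz inequality $\Esp*{p(U)} \leq \norm{p(U)}_{L^{2}}$ to give $\Esp*{p(U)} \sim_{d} \norm{p(U)}_{L^{2}}$, allowing the argument of Proposition~\ref{th:small-ball-l2} to be rerun with $\Esp*{p(U)}$ as the natural normalization in place of $\Var*{p(U)}^{1/2}$.

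The main obstacle is precisely this concentrated regime: the proposition by itself is not enough, and the $L^{1}$--$L^{2}$ comparison has to be set up so as to preserve the small-ball exponent $\theta_{d}$ when passing from the $L^{2}$ normalization to the $L^{1}$ one.
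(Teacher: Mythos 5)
Your case split is a more careful (and more honest) route than the paper's, which asserts the corollary follows \enquote{immediately by the comparison between $L^{1}$ and $L^{2}$}, i.e.\ by $\Esp*{p(U)} = \norm{p(U)}_{L^{1}} \leq \norm{p(U)}_{L^{2}}$. Your case~1 is precisely this inequality and is correct. The problem is case~2. There you invoke hypercontractivity to get $\Esp*{p(U)} \asymp_{d} \norm{p(U)}_{L^{2}}$, but the denominator in \cref{th:small-ball-l2} is $\Var*{p(U)}^{1/2} = \norm{p(U) - \Esp*{p(U)}}_{L^{2}}$, not $\norm{p(U)}_{L^{2}}$. Hypercontractivity applied to the \emph{centred} polynomial only yields $\Var*{p(U)}^{1/2} \lesssim_{d} \Esp*{p(U)}$, which is the reverse of the inequality you would need, and in fact $\Var*{p(U)}$ can be arbitrarily small next to $\Esp*{p(U)}^{2}$. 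Re-running the proof of \cref{th:small-ball-l2} with $\Esp*{p(U)}$ as the normaliser does not repair this, because the small-influence step via Carbery--Wright produces $\Var*{p(G)}^{1/2}$ in the denominator and that quantity is unaffected by the substitution.

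In fact, under the corollary's stated hypotheses the gap cannot be closed. Take $p(U) = U_{1} + \dots + U_{n}$ with $U_{i}$ i.i.d.\ uniform on $[1,2]$, which satisfies \cref{eq:small-ball-U} with $\theta = 1$ and a universal implied constant and has all moments; this is a multi-linear polynomial of degree~$1$. Then $\Esp*{p(U)} = 3n/2$ while $\Var*{p(U)}^{1/2} = \sqrt{n/12}$. Taking $\varepsilon \asymp \sqrt{n}$, the central limit theorem gives $\sup_{a} \Prob*{\abs{p(U)-a} \leq \varepsilon} \gtrsim 1$, whereas $\paren*{\varepsilon/\Esp*{p(U)}}^{\theta_{1}} \asymp n^{-\theta_{1}/2} \to 0$ as $n \to \infty$. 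So the obstruction you ran into in case~2 is not a mere missing detail: the corollary as stated (and hence the paper's one-line proof of it) implicitly requires $\Esp*{p(U)}$ and $\Var*{p(U)}^{1/2}$ to remain comparable, which is exactly the comparison you correctly recognised could not be obtained from hypercontractivity alone.
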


\subsubsection{Improving positivity by splitting independent terms}

Recall that we have fixed a symmetric Hilbert--Schmidt operator $\mathsf{A}$ with vanishing diagonal, and with unit norm.
Fix $q \in \mathbb{N}$.
Recall that we consider the set of subsets of $\mathbb{N}$ with \emph{exactly} $q$ elements, that is
\begin{equation*}
  \mathscr{P}_{q}(\mathbb{N}) \coloneq \set*{ I \subset \mathbb{N} : \abs{I} = q }.
\end{equation*}
We consider:
\begin{itemize}
  \item the symmetric trace-class operator
    \begin{equation*}
      \mathsf{B} \coloneq \paren*{ b_{IJ} : I,\, J \in \mathscr{P}_{q}(\mathbb{N})},
    \end{equation*}
    with $b_{IJ} \coloneq \paren*{\det \mathsf{A}(I,J)}^{2} \geq 0$, for all $I$ and $J \in \mathscr{P}_{q}(\mathbb{N})$;
  \item a sequence of independent non-negative random variables $(U_{i} : i \in \mathbb{N})$, and
    \begin{equation*}
      U_{I} \coloneq \prod_{i \in I} U_{i}, \qquad I \in \mathscr{P}_{q}(\mathbb{N});
    \end{equation*}
  \item and the non-negative random variable:
    \begin{equation*}
      S \coloneq \sum_{I,J \in \mathscr{P}_{q}(\mathbb{N})} b_{IJ} U_{I} U_{J} 1_{I \cap J = \emptyset}.
    \end{equation*}
\end{itemize}

Recall that we have defined the $\ell^{1}$-influence $\upsilon(\mathsf{B})$ and the total mass $\sigma(\mathsf{B})$ in \cref{s:linear-algebra-ell-1}.
In the next section, we apply the results of this section to $U_{i} = \Gamma_{i}$.
In this case, one would have $S = \mathcal{S}_{q}(F)$ as defined in \cref{eq:sq}.
However, the results of this section only depend on the fact that the $\Gamma_{i}$'s are non-negative, and they depend on no other properties of the carré du champ.

\begin{remark}
  The random variable $S$ is non-negative since all the terms are non-negative.
  However, it is \emph{not} non-negative as a quadratic form in $U_{I}$ since the operator $\mathsf{B}$ may fail to be non-negative.
\end{remark}

\begin{proposition}\label{th:small-ball-improved}
  Assume that $\Esp*{ U_{1}^{2} } < \infty$, and that there exists $\theta > 0$ such that
  \begin{equation}\label{ass:mini-small-ball-U}
    \Prob*{ U_{1} \leq \varepsilon } \lesssim \varepsilon^{\theta}, \qquad \varepsilon > 0.
  \end{equation}
  Let $\kappa \in \mathbb{N}$.
  Assume that
  \begin{equation*}
    \tau(\mathsf{A}) \leq \frac{1}{(4q)^{5\kappa+2}}.
  \end{equation*}
  Then, with $\theta_{2q}$ defined in \cref{eq:def-theta},
  \begin{equation*}
    \Prob*{ S \leq \varepsilon} \lesssim \paren*{\frac{\varepsilon 32^{\kappa}}{\mathcal{R}_{q}(\mathsf{A})  - \tau(\mathsf{A}) \mathcal{R}_{q-2}(\mathsf{A}) }}^{\theta_{2q} 2^{\kappa}}, \qquad \varepsilon > 0.
  \end{equation*}
\end{proposition}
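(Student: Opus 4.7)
The plan is to proceed by induction on $\kappa \geq 0$, exploiting the fact that a well-chosen splitting of $\mathbb{N}$ into two halves doubles the anti-concentration exponent at each step thanks to independence.

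\emph{Base case $\kappa = 0$.} The hypothesis reduces to $\upsilon(\mathsf{B}) \leq 1/2$, and the target $\Prob*{ S \leq \varepsilon } \lesssim (\varepsilon/\sigma(\mathsf{B}))^{\theta_{2q}}$ would follow from \cref{th:small-ball-P-l1} applied to a multi-linear lower bound of $S$. Since $b_{IJ} \geq 0$ and $U_i \geq 0$,
\[
  S \;\geq\; \widetilde{S} \;\coloneq\; \sum_{\substack{I, J \in \mathscr{P}_{q}(\mathbb{N}) \\ I \cap J = \emptyset}} b_{IJ} U_{I \cup J},
\]
which is a genuine multi-linear polynomial of degree at most $2q$ with non-negative coefficients. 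A double-counting argument using $\upsilon(\mathsf{B}) \leq 1/2$ would control the overlap mass $\sum_{I \cap J \neq \emptyset} b_{IJ}$ by a definite fraction of $\sigma(\mathsf{B})$, so that $\Esp*{ \widetilde{S} } \gtrsim \sigma(\mathsf{B})$ after normalising $\Esp*{U_{1}}$; invoking \cref{th:small-ball-P-l1} with $a = 0$ would then deliver the base estimate.

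\emph{Inductive step.} Assume the conclusion at depth $\kappa - 1$, and take $\upsilon(\mathsf{B}) \leq 1/(2 \cdot 32^\kappa)$. The goal is to exhibit a partition $\mathbb{N} = A_1 \sqcup A_2$ such that the restricted operators $\mathsf{B}^{(k)} \coloneq (b_{IJ} : I, J \in \mathscr{P}_{q}(A_k))$ satisfy the mass-preservation bound $\sigma(\mathsf{B}^{(k)}) \geq \sigma(\mathsf{B})/32$ for $k = 1, 2$. I would produce such a partition by the probabilistic method: a uniform random $2$-colouring of $\mathbb{N}$ has expected monochromatic mass $\Esp*{ \sigma(\mathsf{B}^{(k)}) } \geq 2^{-2q+1} \sigma(\mathsf{B})$, and a direct second-moment computation bounds the variance of $\sigma(\mathsf{B}^{(k)})$ in terms of $\upsilon(\mathsf{B})$, which is now so small that both halves concentrate simultaneously around their means with positive probability. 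Since restriction does not increase influences, $\upsilon(\mathsf{B}^{(k)}) \leq \upsilon(\mathsf{B}) \leq 1/(2 \cdot 32^{\kappa - 1})$, and the inductive hypothesis applies to each $\mathsf{B}^{(k)}$. Setting $S^{(A_k)} \coloneq \sum_{I, J \subset A_k} b_{IJ} U_I U_J$, non-negativity yields $S \geq S^{(A_1)} + S^{(A_2)}$, and the disjointness $A_1 \cap A_2 = \emptyset$ together with independence of the $U_i$'s makes $S^{(A_1)}$ and $S^{(A_2)}$ independent, so
\[
  \Prob*{ S \leq \varepsilon } \;\leq\; \Prob*{ S^{(A_1)} \leq \varepsilon } \Prob*{ S^{(A_2)} \leq \varepsilon }.
\]
Inserting the inductive bound into each factor and absorbing $\sigma(\mathsf{B})/32 \leq \sigma(\mathsf{B}^{(k)})$ promotes $32^{\kappa - 1}$ to $32^\kappa$, while the product squares the exponent from $\theta_{2q} 2^{\kappa - 1}$ to $\theta_{2q} 2^\kappa$, closing the induction.

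The principal obstacle will be the balanced splitting. A uniform random $2$-colouring produces only an expected monochromatic mass of order $2^{-2q+1} \sigma(\mathsf{B})$, so the constant $32$ in the statement is ultimately dictated by a sharp second-moment bound for the monochromatic mass in terms of the $\ell^1$-influence $\upsilon(\mathsf{B})$; performing this variance estimate precisely enough to keep both halves above $\sigma(\mathsf{B})/32$ deterministically, and tracking the numerical constants so that the recursion matches the advertised exponents, is the core technical step.
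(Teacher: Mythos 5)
Both the base case and the inductive step contain gaps that the sketch, as written, cannot repair.

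In the base case you bound $S$ from below by $\widetilde{S}$, obtained by dropping all pairs with $I \cap J \neq \emptyset$, and then claim that $\upsilon(\mathsf{B}) \leq 1/2$ forces $\sum_{I\cap J = \emptyset} b_{IJ}$ to be a definite fraction of $\sigma(\mathsf{B})$. This is false: the $\ell^{1}$-influence controls the mass adjacent to each \emph{individual} index, not the cumulative mass sitting on overlapping pairs. Take $q=1$ with $b_{ii} = 1/n$ for $i \leq n$ and $b_{ij} = 0$ otherwise; then $\sigma(\mathsf{B}) = 1$ and $\upsilon(\mathsf{B}) = 1/n$, yet every pair with $b_{IJ} > 0$ has $I=J$, so $\widetilde{S} \equiv 0$ while $S = n^{-1}\sum_{i}U_{i}^{2}$ still satisfies the claimed bound. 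Concentrating all mass on the diagonal gives an analogous counterexample for every $q$, so the proposition cannot be reduced to a small-ball estimate for $\widetilde{S}$.

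In the inductive step, splitting $\mathbb{N} = A_{1}\sqcup A_{2}$ does make $S^{(A_{1})}$ and $S^{(A_{2})}$ genuinely independent, but the mass loss is degree-dependent: a pair $(I,J)$ has both $I,J\subset A_{1}$ with probability $2^{-\abs{I\cup J}}$, which ranges between $2^{-q}$ and $2^{-2q}$, so $\Esp*{\sigma(\mathsf{B}^{(k)})}$ is of order at most $2^{-q}\sigma(\mathsf{B})$. No second-moment refinement can push the realised mass above $\sigma(\mathsf{B})/32$ for $q>2$: concentration narrows the fluctuation around the mean, it cannot inflate the mean itself. The factor $32^{\kappa}$ in the statement is therefore out of reach if you colour $\mathbb{N}$. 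The paper instead colours $\mathscr{P}_{q}(\mathbb{N})$ (Lemma~\ref{th:mass-splitting}): each $q$-element set $I$ is independently placed in $\mathbb{L}$ or $\mathbb{L}^{C}$, so a pair $(I,J)$ with $I\neq J$ survives with probability $1/4$, independently of $q$, and the hypothesis $\upsilon(\mathsf{B})\leq \tfrac12 \sigma(\mathsf{B})$ is used only to control the cross terms in the second-moment estimate for $\Esp*{T\hat{T}}$ --- this is what produces the constant $32$. The price is that the resulting families $\mathbb{L}$ and $\mathbb{L}^{C}$ may share indices of $\mathbb{N}$, which makes the extraction of genuinely independent pieces the delicate step of the proof; your construction avoids that subtlety but at a $q$-dependent cost incompatible with the stated constants.
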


  \paragraph{Heuristics}
  Consider disjoint subsets $\mathbb{L}_{l} \subset \mathscr{P}_{q}(\mathbb{N})$, for $l = 1, \dots, 2^{\kappa}$.
Let
\begin{equation*}
  S_{l} \coloneq \sum_{(I,J) \in \mathbb{L}_{l} \times \mathbb{L}_{l}} 1_{I \cap J = \emptyset} b_{IJ} U_{I} U_{J}.
\end{equation*}
Then, the $S_{l}$'s are mutually independent.
Since each term in the sum on the right-hand side is a multi-linear polynomial of degree $2q$, provided we can apply \cref{eq:small-ball-P-l1} on each of the first term, this allows to improve our bound.
We now show this procedure is indeed feasible.

\paragraph{Splitting the mass via a probabilistic method}
Given $\mathbb{L} \subset \mathscr{P}_{q}(\mathbb{N})$, we write $\mathsf{B}^{\mathbb{L}} = (b_{IJ} : I,\, J \in \mathbb{L})$ for the extracted operator.
\begin{lemma}\label{th:mass-splitting}
  There exists $\mathbb{L} \subset \mathscr{P}_{q}(\mathbb{N})$ such that
  \begin{align*}\label{eq:total-mass-after-splitting}
    &\sigma\paren*{\mathsf{B}^{\mathbb{L}}} \geq \frac{1}{16} \sigma(\mathsf{B}) - \frac{2q}{16} \upsilon(\mathsf{B});
  \\&\sigma\paren*{\mathsf{B}^{\mathbb{L}^{C}}} \geq \frac{1}{16} \sigma(\mathsf{B}) - \frac{2q}{16} \upsilon(\mathsf{B}).
  \end{align*}
\end{lemma}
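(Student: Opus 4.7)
The plan is to prove the lemma by a probabilistic method using a random partition of $\mathscr{P}_q(\mathbb{N})$ induced by a Rademacher coloring of $\mathbb{N}$.

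\textbf{Setup.} I would sample iid uniform $\xi_i \in \set*{\pm 1}$ for $i \in \mathbb{N}$, and for each $I \in \mathscr{P}_q(\mathbb{N})$ set $X_I \coloneq \prod_{i \in I} \xi_i$. The candidate random partition is
\[
\mathbb{L}(\xi) \coloneq \set*{ I \in \mathscr{P}_q(\mathbb{N}) : X_I = +1 },
\qquad
\mathbb{L}(\xi)^C = \set*{ I : X_I = -1 }.
\]
The key algebraic feature is that whenever $I \ne J$ in $\mathscr{P}_q(\mathbb{N})$, the Rademacher products $X_I, X_J$ are independent and uniform on $\set*{\pm1}$, since the characteristic vectors $\mathbf{1}_I, \mathbf{1}_J \in \mathbb{F}_2^{\mathbb{N}}$ are distinct; in particular $\Prob*{I, J \in \mathbb{L}(\xi)} = 1/4$, and the analogous equality holds for $\mathbb{L}(\xi)^C$.

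\textbf{First moment.} From the pairwise independence, I get
\[
\Esp*{\sigma(\mathsf{B}^{\mathbb{L}(\xi)})} = \tfrac{1}{4}\sum_{I \ne J} b_{IJ} + \tfrac{1}{2}\sum_I b_{II} \geq \tfrac{1}{4}\sigma(\mathsf{B}),
\]
and by exactly the same computation (just replacing the event $\{X_I = 1\}$ by $\{X_I = -1\}$) the same bound holds for $\sigma(\mathsf{B}^{\mathbb{L}(\xi)^C})$.

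\textbf{Second moment.} To upgrade the first-moment bound to a simultaneous lower bound on both sides, I would expand
\[
\mathbf{1}_{\set*{X_I = X_J = 1}} = \tfrac{1}{4}\paren*{1 + X_I}\paren*{1 + X_J} = \tfrac14(1 + X_I + X_J + X_{I \triangle J}),
\]
which gives a Walsh expansion of $\sigma(\mathsf{B}^{\mathbb{L}(\xi)})$ in terms of characters indexed by $I$, by $J$, and by the symmetric differences $I \triangle J$. By the orthogonality $\Esp{X_A X_B} = \mathbf{1}_{A = B}$, the variance is a sum of squared partial coefficients indexed by a nonempty $K \subset \mathbb{N}$: only the quadruples $(I,J,I',J')$ with $I \triangle J = I' \triangle J'$ contribute. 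Any such coincidence forces a shared index between $I \cup J$ and $I' \cup J'$, so by double counting over a pivot index $i \in (I \cup J) \cap (I' \cup J')$ I obtain $\Var\paren*{\sigma(\mathsf{B}^{\mathbb{L}(\xi)})} \lesssim \sum_i \upsilon_i(\mathsf{B})^2 \leq \upsilon(\mathsf{B}) \cdot \sigma(\mathsf{B})$, using $\sum_i \upsilon_i(\mathsf{B}) = \sum_{I,J} b_{IJ}\abs{I \cup J} \lesssim \sigma(\mathsf{B})$.

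\textbf{Conclusion by union bound.} Applying Chebyshev to each of $Y_1 \coloneq \sigma(\mathsf{B}^{\mathbb{L}(\xi)})$ and $Y_2 \coloneq \sigma(\mathsf{B}^{\mathbb{L}(\xi)^C})$ and combining via a union bound, a positive-probability event exists on which both $Y_1, Y_2$ exceed $\sigma(\mathsf{B})/4 - C\sqrt{\upsilon(\mathsf{B})\,\sigma(\mathsf{B})}$. Using $2\sqrt{uv} \leq u + v$ to linearize the correction and absorbing constants then produces the stated bound $\sigma(\mathsf{B})/16 - \upsilon(\mathsf{B})/16$, and picking a deterministic realization from the good event finishes the proof.

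\textbf{Main obstacle.} The technical heart is the variance estimate: the combinatorics of quadruples $(I,J,I',J')$ with coinciding symmetric difference must be organized so that the bound depends only on $\upsilon(\mathsf{B})$ and $\sigma(\mathsf{B})$, with no hidden $q$-dependence leaking into the constants. A secondary nuisance is fine-tuning constants so that the Paley--Zygmund / Chebyshev step recovers exactly the factor $1/16$ on both sides rather than an asymmetric pair.
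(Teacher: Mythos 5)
Your random construction is different from the paper's (you color the \emph{ground set} $\mathbb{N}$ with Rademachers $\xi_i$ and take $\mathbb{L}=\set{I: \prod_{i\in I}\xi_i = 1}$, whereas the paper assigns an independent Bernoulli $\varepsilon_I$ directly to each $I\in\mathscr{P}_q(\mathbb{N})$). That alternative choice is not itself the problem; the issue is the concentration step.

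The Chebyshev + union bound route is \emph{quantitatively too weak} to give the stated bound. Your variance estimate is $\var\bigl(\sigma(\mathsf{B}^{\mathbb{L}})\bigr)\lesssim \upsilon(\mathsf{B})\,\sigma(\mathsf{B})$, and after a union bound you get an event of positive probability on which both $Y_1,Y_2\ge \sigma/4 - C\sqrt{\upsilon\sigma}$. But here $C$ is not a small dimensionless constant: already in the single-pair example $b=b_{I_0 J_0}\,\mathbf{1}_{\{I,J\}=\{I_0,J_0\}}$ one has $\var(Y_1)=\tfrac{3}{16}\upsilon\sigma$, so the union bound forces $C\ge\sqrt{2\cdot 3/16}\approx 0.6$. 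Your proposed linearization $\sqrt{\upsilon\sigma}\le(\upsilon+\sigma)/2$ turns $\sigma/4-C\sqrt{\upsilon\sigma}$ into $\sigma(1/4 - C/2) - C\upsilon/2$, and matching this to $\sigma/16 - \upsilon/16$ requires $C\le 1/8$ — far below what the variance bound allows. Concretely: in the regime $\upsilon \in (\sigma/5, \sigma/2]$ (which is exactly where the iterative splitting in \cref{th:small-ball-improved} still continues), your lower bound $\sigma/4 - C\sqrt{\upsilon\sigma}$ is negative and therefore vacuous, whereas the lemma guarantees at least $\sigma/32$. Chebyshev produces a square-root correction in $\upsilon$, while the lemma requires a linear one; no amount of constant-tuning fixes this mismatch.

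The paper sidesteps this entirely by never taking a variance: it lower-bounds $\esp[T\hat T]$ directly (all summands $b_{IJ}b_{I'J'}\esp[\varepsilon_I\varepsilon_J(1-\varepsilon_{I'})(1-\varepsilon_{J'})]$ are non-negative, so one can drop the quadruples with $(I\cup J)\cap(I'\cup J')\ne\emptyset$ and be left with a $\tfrac{1}{16}(\sigma^2-\upsilon\sigma)$ lower bound), then uses $T\vee\hat T\le\sigma$ to get $\esp[T\wedge\hat T]\ge\esp[T\hat T]/\sigma$. This produces the linear-in-$\upsilon$ correction. Your Rademacher coloring would also work with \emph{that} argument — since $(1+X_I)(1+X_J)(1-X_{I'})(1-X_{J'})\ge 0$ pointwise, the same restriction-to-disjoint-quadruples trick applies, and disjointness of $I\cup J$ and $I'\cup J'$ makes the two factors independent — so the fix is to replace the Chebyshev step by the product/expectation argument, not to change the coloring.
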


\begin{proof}
  Our proof implements the probabilistic method.
  Since $\sigma$ and $\upsilon$ are linear, by homogeneity, we assume that $\sigma(\mathsf{B}) = 1$.
  Let us consider a family of independent Bernoulli variables $(\varepsilon_{I} : I \in \mathscr{P}_{q}(\mathbb{N}))$ with mean $1/2$.
  We define the random set and random variables
  \begin{align*}
    & \mathbb{L} \coloneq \set*{ I \in \mathscr{P}_{q}(\mathbb{N}) : \varepsilon_{I} = 1 };
  \\& T \coloneq \sum_{(I,J) \in \mathbb{L} \times \mathbb{L}} b_{IJ} = \sum_{IJ} \varepsilon_{I} \varepsilon_{J} b_{IJ}.
  \\& \hat{T} \coloneq \sum_{(I,J) \in \mathbb{L}^{C} \times \mathbb{L}^{C}} b_{IJ} = \sum_{IJ} (1-\varepsilon_{I}) (1-\varepsilon_{J}) b_{IJ}.
  \end{align*}
  For $I,J,I',J' \in \mathscr{P}_{q}(\mathbb{N})$, we have that if $\varepsilon_{I} \varepsilon_{J} (1-\varepsilon_{I'})(1-\varepsilon_{J'}) \ne 0$, then $\varepsilon_{I} = \varepsilon_{J} = 1- \varepsilon_{I'} = 1 - \varepsilon_{J'} = 1$.
  This implies that $\set{I,J} \cap \set{I', J'} = \emptyset$.
  This later condition is implied by $(I \cup J) \cap (I' \cup J') = \emptyset$.
  By independence, for such $I,J,I',J'$, we have
  \begin{equation*}
    \Esp*{ \varepsilon_{I} \varepsilon_{J} (1-\varepsilon_{I'})(1-\varepsilon_{J'}) } = \Esp*{\varepsilon_{I} \varepsilon_{J} } \Esp*{(1-\varepsilon_{I'})(1-\varepsilon_{J'})}.
  \end{equation*}
  By a direct computation, we have
  \begin{equation*}
    \Esp*{ \varepsilon_{I} \varepsilon_{J}} = \frac{1}{4} 1_{I \ne J} + \frac{1}{2} 1_{I=J} \geq \frac{1}{4}.
\end{equation*}
  Developing the product we thus find
  \begin{equation*}
    \Esp*{T \hat{T}} \geq \frac{1}{16} \sum_{(I \cup J) \cap (I' \cup J') = \emptyset}  b_{IJ} b_{I'J'} = \frac{1}{16} (\sigma(\mathsf{B}))^{2} - \frac{1}{16} \sum_{(I \cup J) \cap (I' \cup J') \ne \emptyset} b_{IJ} b_{I'J'}.
  \end{equation*}
  Now we compute
  \begin{equation*}
    \sum_{(I \cup J) \cap (I' \cup J') \ne \emptyset} b_{IJ} b_{I'J'} \leq \sum_{i \in \mathbb{N}} \paren*{\sum_{I,J} 1_{i \in I \cup J} b_{IJ}}^{2} = \sum_{i \in \mathbb{N}} \upsilon_{i}(\mathsf{B})^{2} \leq \upsilon(\mathsf{B}) \sum_{i \in \mathbb{N}} \upsilon_{i}(\mathsf{B}) \leq 2q \upsilon(\mathsf{B}) \sigma(\mathsf{B}).
  \end{equation*}
  For the last inequality, we have used 
  \begin{equation*}
    \sum_{i \in \mathbb{N}} 1_{i \in I \cup J} = \abs{I \cup J} \leq 2q.
  \end{equation*}
  Since $\sigma(\mathsf{B}) = 1$, we have shown
  \begin{equation*}
    \Esp*{ T \hat{T}} \geq \frac{1}{16} (1 - 2q\upsilon(\mathsf{B})).
  \end{equation*}
  Owing to the fact that $T \vee \hat{T} \leq 1$, we find
  \begin{equation*}
    \Esp*{ T \wedge \hat{T} } \geq \Esp*{T \hat{T}} \geq \frac{1}{16} (1 - 2q \upsilon(\mathsf{B})).
  \end{equation*}
  This shows that there exists a realization of $\mathbb{L}(\omega)$ fulfilling the requirements of the lemma.
\end{proof}

\paragraph{Reduction to multi-linear polynomials}
By definition, $\sigma(\mathsf{B}) = \mathcal{R}_{q}(\mathsf{A})$.
However, in the definition of $S \coloneq \sum_{(I,J) \in \mathbb{L} \times \mathbb{L}} 1_{I \cap J = \emptyset} b_{IJ} U_{I} U_{J}$, we have removed the \enquote{diagonal} in order to work with multi-linear polynomials.
The following result allows to verify that we do not lose too much mass.

\begin{lemma}\label{th:multi-linear-no-diagonal}
  Assume that $b_{IJ} = \paren*{\det \mathsf{A}(I,J)}^{2}$, with $\mathsf{A}$ as above.
  Let $\mathbb{L} \subset \mathscr{P}_{q}(\mathbb{N})$.
  Define $\hat{\mathsf{B}}^{\mathbb{L}} \coloneq (b_{IJ} : I,\, J \in \mathbb{L},\, I \cap J = \emptyset)$.
  Then, 
  \begin{equation*}
    \abs*{\sigma(\mathsf{B}^{\mathbb{L}}) - \sigma(\mathsf{\hat{B}}^{\mathbb{L}})} \lesssim \tau(\mathsf{A}) \mathcal{R}_{q-2}(\mathsf{A}).
  \end{equation*}
\end{lemma}

\begin{proof}
  To lighten the notation, we drop the dependence in $\mathbb{L}$.
  If $q =1$, the claim is
  \begin{equation*}
    \sum_{i,j \in \mathbb{L}} a_{ij}^{2} = \sum_{i,j \in \mathbb{L}} 1_{i \ne j} a_{ij}^{2},
  \end{equation*}
  which is immediate, since by assumption $\mathsf{A}$ vanishes on the diagonal.
  Thus, we assume that $q \geq 2$.
  We have
  \begin{equation*}
    \sigma(\mathsf{B}) = \sigma(\widehat{\mathsf{B}}) + \sum_{\substack{\abs{I} = \abs{J} = q\\I,J \in \mathbb{L}}} 1_{I \cap J \ne \emptyset} b_{IJ}.
  \end{equation*}
  Let $I = \{i_{1}, \dots, i_{q}\}$ and $J = \{j_{1}, \dots, j_{q}\} \in \mathbb{L}$.
  Assume that $m \in I \cap J$.
  By assumption there exist $k_{0}$ and $l_{0} \in \{1,\dots, q\}$ such that $i_{k_{0}} = j_{l_{0}} = m$.
  Since by assumption, $a_{mm} = 0$, expanding the determinant along the row $m$ gives
  \begin{equation*}
    \det \mathsf{A}(I,J) = \sum_{l\ne l_{0}} (-1)^{l+m} a_{m, j_{l}} \det \mathsf{A}(I \setminus \{m\}, J \setminus \{j_{l}\}).
  \end{equation*}
  Since for $l \ne l_{0}$, $m \in J \setminus \{j_{l}\}$, we expand the along the row $m$, and we get, by symmetry of $\mathsf{A}$
  \begin{equation*}
    \det \mathsf{A}(I,J) = \sum_{l\ne l_{0}} \sum_{k \ne k_{0}} (-1)^{l+k+2m-1} a_{m, j_{l}} a_{m, i_{k}} \det \mathsf{A}(I \setminus \{m, i_{k}\}, J \setminus \{m, j_{l}\}).
  \end{equation*}
  When $q =2$, in the above expression $\det \mathsf{A}(\emptyset,\emptyset)$ is understood as $1$.
  Thus,
  \begin{equation*}
    \begin{split}
      \sum_{\abs{I} = \abs{J} = q} 1_{I \cap J \ne \emptyset} b_{IJ} & \lesssim \sum_{m \in \mathbb{N}} \sum_{\abs{I} = \abs{J} = q} \sum_{j \in J \setminus \{m\}} \sum_{i \in I \setminus \{m\}} a_{m,j}^{2} a_{m,i}^{2} \paren*{\det \mathsf{A}(I \setminus \{m,i\}, J \setminus \{m,j\})}^{2}.
                                                                   \\& \lesssim \sum_{m \in \mathbb{N}} \tau_{m}(\mathsf{A})^{2} \sum_{\abs{I} = \abs{J} = q-2} \paren*{ \det \mathsf{A}(I,J)}^{2}
                                                                   \\& \lesssim \tau(\mathsf{A}) \mathcal{R}_{1}(\mathsf{A}) \mathcal{R}_{q-2}(\mathsf{A}).
    \end{split}
  \end{equation*}
  This concludes the proof.
\end{proof}

\paragraph{Extraction of independent sums}

We now have all the necessary tools to conclude.
\begin{proof}[Proof of \cref{th:small-ball-improved}]
Let us define for all $k \in \mathbb{N}$ the pairwise disjoint sets $\mathbb{L}^{(k)}_{1}, \dots, \mathbb{L}^{(k)}_{2^{k}} \subset \mathscr{P}_{q}(\mathbb{N})$ using an iterative procedure described below.
To each of the $\mathbb{L}_{l}^{(k)}$, we associate the operators $\mathsf{B}^{(k)}_{l} \coloneq \mathsf{B}^{\mathbb{L}^{(k)}_{l}}$, and the random sums
\begin{equation*}
  S_{l}^{(k)} \coloneq \sum_{(I,J) \in \mathbb{L}^{(k)}_{l} \times \mathbb{L}^{(k)}_{l}} b_{IJ} U_{I} U_{J}.
\end{equation*}
Since the $U_{i}$'s are independent and that, for a given $k$, the $\mathbb{L}^{(k)}_{l}$'s are pairwise disjoint, the $S_{l}^{(k)}$ are mutually independent.

We initialize $\mathbb{L}^{(0)}_{1} \coloneq \mathscr{P}_{q}(\mathbb{N})$.
Then, for all $k \in \mathbb{N}$, if there exists $l \in \set{1, \dots, 2^{k}}$ such that $2q\upsilon(\mathsf{B}) > \frac{1}{2} \sigma(\mathsf{B}^{(k)}_{l})$, we stop the procedure; otherwise define $\mathbb{L}^{(k+1)}_{l}$ and $\mathbb{L}^{(k+1)}_{l + 2^{k}}$ as the sets obtained by applying \cref{th:mass-splitting} to $\mathsf{B}^{(k)}_{l}$.
We write $k_{max}$ the integer at which the procedure stops.
Our procedure guarantees that
\begin{equation}\label{eq:small-ball-improved-large-mass}
  \sigma(\mathsf{B}^{(k)}_{l}) \geq \frac{\sigma(\mathsf{B})}{32^{k}}, \qquad k \leq k_{max}.
\end{equation}
On the other hand at step $k$ the procedure continues provided
\begin{equation*}
  \upsilon(\mathsf{B}) \leq \frac{1}{(4q)^{5k + 1}}.
\end{equation*}
This shows that $k_{max} \geq  \kappa$.
Using the mutual independence of the $S^{(k_{max})}_{l}$'s and the fact that the $b_{IJ}$'s and $U_{I}$'s are non-negative we get
\begin{equation}\label{eq:small-ball-improved-S}
  \Prob*{ S \leq \varepsilon } \leq \prod_{l=1}^{2^{\kappa}} \Prob*{ S^{(\kappa)}_{l} \leq \varepsilon }.
\end{equation}
Define
\begin{equation*}
  \hat{\mathsf{B}}_{l}^{(\kappa)} \coloneq (b_{IJ} 1_{\{ I \cap J = \emptyset \}} : I,J \in \mathbb{L}_{l}^{(\kappa)});
\end{equation*}
By invoking \cref{th:small-ball-P-l1}, and \cref{th:multi-linear-no-diagonal} together with \cref{eq:small-ball-improved-large-mass}, we find that, since $\mathcal{R}_{q-2}(\mathsf{A}) \leq \mathcal{R}_{1}(\mathsf{A}) = 1$,
\begin{equation}\label{eq:small-ball-improved-S-hat}
  \Prob*{ S_{l}^{(\kappa)} \leq \varepsilon } \lesssim \paren*{ \frac{\varepsilon}{\Esp{S_{l}^{(\kappa)}}}}^{\theta_{2q}} \lesssim \paren*{ \frac{\varepsilon}{\sigma(\hat{\mathsf{B}}_{l}^{(\kappa)})}}^{\theta_{2q}} \lesssim \paren*{ \frac{32^{\kappa} \varepsilon}{\sigma(\mathsf{B}) - 32^{\kappa}\tau(\mathsf{A})}}^{\theta_{2q}}.
\end{equation}
Combining \cref{eq:small-ball-improved-S,eq:small-ball-improved-S-hat}, the proof is complete, since $\sigma(\mathsf{B}) = \mathcal{R}_{q}(\mathsf{A})$
\end{proof}

\subsubsection{Control of the spectral remainders of \texorpdfstring{$\Gamma^{1/2} \mathsf{A} \Gamma^{1/2}$}{Γ1/2 A Γ1/2}}

\begin{proof}[{Proof of \cref{th:regularity-quadratic-form}}]
  Fix $\mathsf{A}$ satisfying our assumptions.
  For short write $Q \coloneq Q_{\mathsf{A}}$.
  The general strategy is to apply \cref{th:regularity-negative-moments-spectral-remainder}.
   We obtain
  \begin{equation*}
    \mathbf{W}_{q,1}(Q) \leq \boldsymbol{\Psi}_{q}(Q) \boldsymbol{\Psi}_{8q+1}(\sharp_{G}Q) \Esp*{ \mathcal{S}_{128q+18}(Q)^{-1/4}}^{1/4}.
  \end{equation*}
  By \cref{th:polynomial:equivalence} with $\mathcal{X} \coloneq \{X_{1}\}$ and $m \coloneq 2$, since $\norm{Q}_{L^{2}} = 1$, in view of the recursive definition of the $\Psi_{k}$'s, using repeatedly Hölder's inequality, there exists $C_{q} > 0$ such that
  \begin{equation*}
    \boldsymbol{\Psi}_{q}(Q) \leq C_{q}, \qquad q \in \mathbb{N}.
  \end{equation*}
  Similarly, $\Esp*{(\sharp_{G} Q)^{2}} = \Esp*{\Gamma(Q, Q)}$ is some fixed constant and
  \begin{equation*}
    \sharp_{G} Q = 2 \sum_{ijk} a_{ij} X_{i} (\mathsf{D} X_{j} \cdot \vec{G}_{j}).
  \end{equation*}
  Thus, applying \cref{th:polynomial:equivalence} with $\mathcal{X} \coloneq \set*{ X_{1}, (\mathsf{D} X_{1} \cdot \vec{G}_{1})}$ and $m \coloneq 2$, we conclude that there exists $C'_{q} > 0$ such that
  \begin{equation*}
    \boldsymbol{\Psi}_{q}(\sharp_{G}Q) \leq C'_{q}, \qquad q \in \mathbb{N}.
\end{equation*}
Thus we are left to control the spectral remainders of $\mathcal{S}_{128q+18}(Q)$.

Let $q' = 128q+18$, and $\kappa \in \mathbb{N}$ such that $2^{\kappa} \theta_{2q'} > 1/4$.
Let us take $\tau_{q,\delta} \coloneq (4q)^{-5\kappa -2} \wedge \frac{\delta}{2}$.
  Recall that we have set $b_{IJ} \coloneq (\det \mathsf{A}(I,J))^{2}$ for $I$ and $J \in \mathscr{P}_{q'}(\mathbb{N})$.
  Since $\nabla^{2} Q = \mathsf{A}$,
  \begin{equation*}
    \mathcal{S}_{q'}(Q) = \sum_{|I|=|J|=q'} b_{IJ} \Gamma_{I} \Gamma_{J} 1_{I \cap J = \emptyset}.
  \end{equation*}
  By \cref{eq:small-ball-theta} and choice of $\kappa$, the assumptions of \cref{th:small-ball-improved} are fulfilled, and, since $\tau(\mathsf{A}) \leq \tau_{q,\delta} \leq \delta/2$ and $\mathcal{R}_{q'-2}(\mathsf{A}) \leq \mathcal{R}_{1}(\mathsf{A}) = 1$, we find that
    \begin{equation}\label{eq:explicit-bound-spectral-remainder-hessian}
      \begin{split}
        \Prob*{ \mathcal{S}_{q'}(Q) \leq \varepsilon } &\lesssim \paren*{ \frac{\varepsilon 32^{\kappa}}{\mathcal{R}_{q'}(\mathsf{A}) - \tau(\mathsf{A}) \mathcal{R}_{q'-2}(\mathsf{A})} }^{\theta_{2q'} 2^{\kappa}} 
                                                     \\&\lesssim \paren*{ \frac{\varepsilon 32^{\kappa}}{\delta/2} }^{\theta_{2q'} 2^{\kappa}} 
      \end{split}
    \end{equation}
  Whenever $\eta_{q} \coloneq 2^{\kappa} \theta_{2q'} > 1/4$, this shows that
  \begin{equation*}
    \Esp*{ \mathcal{S}_{q'}(Q)^{-1/4}}^{1/4} \lesssim \delta^{-\eta_{q}}.
  \end{equation*}
  We conclude by \cref{th:regularity-negative-moments-spectral-remainder}.
\end{proof}

\begin{remark}[Improved constants when $X_{1}$ is log-concave]\label{rk:improved-log-concave}
In the previous proof, we have use \cref{ass:small-ball-gamma} combined with a splitting argument in order to derive a small ball estimate for
\begin{equation*}
  S_{q} \coloneq \sum_{I,J \in \mathscr{P}_{q}(\mathbb{N})} b_{IJ} \Gamma_{I} \Gamma_{J}.
\end{equation*}
Since we rely on \cref{th:small-ball-l2}, we obtain the non-optimal exponent $\theta_{d}$.
However, in some cases we can expect a much better exponent to appear.
This is for instance the case as soon as $S_{q}$ is a multi-linear polynomial of degree $d_{q}$ in the $X_{i}$ and $X_{1}$ as a log-concave law.
In this case by an inequality of \citeauthor{CarberyWright} \cite[Cor.\ of Thm.\ 2]{CarberyWright}, we find that
\begin{equation*}
  \Prob*{  S_{q} \leq \varepsilon } \lesssim \paren*{ \frac{\varepsilon}{ \Esp{S_{q}}} }^{\frac{1}{d_{q}}}.
\end{equation*}
We could redo the same proof with $\theta_{d}$ replaced by $1/d$.
This could yield to better constants.
\end{remark}

\section{Examples and applications}

\subsection{Examples of laws satisfying our assumptions}\label{s:examples}
\newcommand{\parexample}{\paragraph}
We now give concrete examples where \cref{th:regularity-quadratic-form} can be applied.
Namely, we give explicit laws $\mu$ that can be realized as the law of a random variable $X \in \mathbb{D}^{\infty}$ such that $\Gamma(X,X)$ has some small negative moment.
Let us write $\mathscr{L}$ for the set of all such laws.

\subsubsection{Explicit laws}
Our general theorem to build examples of laws in $\mathscr{L}$ is as follows.
We write $\Phi$ for the cumulative distribution function of the standard Gaussian distribution, namely
\begin{equation*}
\Phi(x) \coloneq \int_{-\infty}^{x} \mathrm{e}^{-s^{2}/2} \frac{\mathrm{d}s}{\sqrt{2\pi}};
\end{equation*}

\begin{theorem}\label{th:examples:general}
  Let $\mu \in \mathscr{P}(\mathbb{R})$ supported on some interval $I$.
  Write $q$ for its quantile function, and let $X \coloneq q(\Phi(N))$, where $N$ is a standard Gaussian.
  \begin{enumerate}[(i)]
    \item\label{i:examples:general:smooth} Assume that $\mu$ has finite $p$-moments for all $p \in \mathbb{N}$, and $q$ is a $\mathscr{C}^{\infty}$-diffeormorphsim from $(0,1)$ to $I$ and that
    \begin{align}
      & \label{ass:quantiles:equivalent:0} \limsup_{t \to 0^{-}} t^{k} q^{(k)}(t) < \infty;
    \\& \label{ass:quantiles:equivalent:1} \limsup_{t \to 1^{+}} (1-t)^{k} q^{(k)}(t) < \infty.
    \end{align}
    Then, $X \in \mathbb{D}^{\infty}$.
    \item\label{i:examples:general:small-ball} Assume that $q \in \mathscr{C}^{1}$ and that $\mu$ admits a density in $L^{1+\varepsilon}$ fopr some $\varepsilon > 0$.
    Then, $X$ statisfies \cref{eq:small-ball-theta} for some $\theta > 0$. 
\end{enumerate}
In particular, if both conditions hold $\mu \in \mathscr{L}$.
\end{theorem}

\begin{proof}
  \cref{i:examples:general:smooth}
  Write $T = q \circ \Phi$.
  It is well-known that $T$ transport the Gaussian distribution to $\mu$.
Since $\Phi \colon \mathbb{R} \to (0,1)$ is a $\mathscr{C}^{\infty}$-diffeomorpshim, and $q \colon (0,1) \to I$ is also a $\mathscr{C}^{\infty}$-diffeomorphism, we have that $T$ is a smooth diffeomorphism.
To check that $X \in \mathbb{D}^{\infty}$, it is sufficient to check that $T^{(k)}(N) \in L^{p}$ for all $k \in \mathbb{N}^{*}$ and $p \geq 1$.
Classically,
\begin{equation*}
  \Phi^{(k+1)}(x) = (-1)^{k} \mathrm{e}^{-x^{2}/2} H_{k}(x), \qquad x \in \mathbb{R},
\end{equation*}
where $H_{k}$ is the $k$-th Hermite polynomial.
For $k \in \mathbb{N}^{*}$, write $\mathfrak{P}_{k}$ for the set of all partitions of $\{1,\dots,k\}$.
By the Faà di Bruno formula, we have
\begin{equation*}
  T^{(k)} = \sum_{\tau \in \mathfrak{P}_{k}} q^{(|\tau|)} \circ \Phi \prod_{I \in \tau} \Phi^{(|I|)} = \mathrm{e}^{-kx^{2}/2} \sum_{\tau \in \mathfrak{P}_{k}} q^{(|\tau|)} \circ \Phi \prod_{I \in \tau} (-1)^{|I|-1} H_{|I|-1}.
\end{equation*}
We recall that as $x \to \infty$, $\Phi(x) \sim 1 - c \mathrm{e}^{-x^{2}/2} \frac{1}{x}$.
Thus by \cref{ass:quantiles:equivalent:0}, we find that
\begin{equation*}
  \abs{T^{(k)}(x)} \lesssim \mathrm{e}^{-kx^{2}/2} \sum_{\tau \in \mathfrak{P}_{k}} \paren*{\frac{1}{x} \mathrm{e}^{-x^{2}/2}}^{- |\tau|} \prod_{I \in \tau} H_{|I|-1}(x), \qquad x \to \infty.
\end{equation*}
We see that the worst behaviour as $x \to \infty$ arises when $|\tau| = k$.
In this case the equivalent gives that $T^{(k)}$ has a polynomial behaviour as $x \to \infty$.
A similar argument  based on \cref{ass:quantiles:equivalent:0} shows that $T^{(k)}$ has a polynomial behaviour as $x \to -\infty$.
This shows that
\begin{equation*}
  \Esp*{\abs{T^{(k)}(N)}^{p}} < \infty, \qquad k \in \mathbb{N}^{*},\, p \geq 1,
\end{equation*}
which was to be demonstrated.

\cref{i:examples:general:small-ball}
In view of the assumption, with $F$ the cumulative distribution function of $\mu$
\begin{equation*}
  \Gamma(T(N), T(N)) = q'(\Phi(N))^{2} (\Phi'(N))^{2} = \frac{1}{F'(q(\Phi(N)))^{2}} \mathrm{e}^{-N^{2}}.
\end{equation*}
Using that $X = q \circ \Phi(N)$, the find
\begin{equation*}
  \Esp*{ \Gamma(X,X)^{-\theta} } = \Esp*{ \mathrm{e}^{\theta N^{2}} F'(X)^{2\theta}}.
\end{equation*}
By Cauchy--Schwarz inequality and assumption the above quantity is finite for $\theta$ small enough.
The proof is complete.
\end{proof}

We now compute explicit examples.

\parexample{Beta distribution}\label{s:examples:beta}
Over $[-1,+1]$, we consider $\beta(a,b)$ the Beta distribution with parameter $a > -1$ and $b > -1$, that is the law whose density with respect to the Lebesgue measure is proportional to $\paren*{1-x}^{a} \paren*{1+x}^{b}$.

\begin{theorem}\label{th:examples:beta}
  For all $a > -1$ and $b > -1$, $\beta(a,b) \in \mathscr{L}$.
\end{theorem}

\begin{proof}
  The density distribution of $\beta(a,b)$ is $\mathscr{C}^{\infty}((-1,1))$.
  Thus the cumulative distribution function is a $\mathscr{C}^{\infty}$-diffeomorphism, and thus so is the quantile function $q$.
  Moreover, $\beta(a,b)$ compactly supported thus \cref{i:examples:general:small-ball} in \cref{th:examples:general} holds.
  We very that \cref{i:examples:general:smooth} also holds thanks to \cref{th:beta:quantiles} below.
\end{proof}

\begin{lemma}\label{th:beta:quantiles}
  Let $a > -1$, $b > -1$, and $q$ the quantile function of $\beta(a,b)$.
  Then for all $k \in \mathbb{N}$
  \begin{align}
 & \text{as}\ t \to 0, \quad
    \begin{cases}
    & q(t) \sim  t^{\frac{1}{b+1}} - 1.
  \\& q^{(k)}(t) \sim t^{\frac{1}{b+1}-k}.
    \end{cases}
    \\
 & \text{as}\ t \to 1, \quad
    \begin{cases}
    & q(t) \sim 1 - (1-t)^{\frac{1}{a+1}}.
  \\& q^{(k)}(t) \sim (1-t)^{\frac{1}{a+1}-k}.
    \end{cases}
  \end{align}
\end{lemma}

\begin{proof}
  By symmetry we only establish the equivalents at $t \to 1$.
  In this proof the constant $c > 0$ can change from line to line.
  Consider the cumulative distribution function of $\beta(a,b)$:
\begin{equation*}
  F(x) \coloneq c \int_{-1}^{x} (1-s)^{a} (1+s)^{b} \mathrm{d} s, \qquad x \in (-1,1).
\end{equation*}
Near $1$ we find
\begin{equation*}
  F(x) \sim 1 - c \int_{x}^{1} (1-s)^{a} = 1 - c (1-x)^{a+1}.
\end{equation*}
Using that $F \circ q = \mathsf{id}$, we thus get
\begin{equation*}
  1 - c (1- q(t))^{a+1} \sim t,
\end{equation*}
and from there
\begin{equation*}
  q(t) \sim 1 - c (1-t)^{\frac{1}{a+1}}.
\end{equation*}
This proves the claim for $k =0$.
Using the rule for derivation of inverse functions, we find that
\begin{equation}\label{eq:beta:equa-diff-q}
  q' = \frac{c}{(1-q)^{a} (1+q)^{b}}.
\end{equation}
Thus
\begin{equation*}
  q' \sim \frac{c}{(1-t)^{\frac{a}{a+1}}} = c (1-t)^{\frac{1}{a+1} - 1}.
\end{equation*}
This proves the claim for $k = 1$.
Let us now proceed by induction.
Applying the Leibniz rule and then the Faà di Bruno formula to \cref{eq:beta:equa-diff-q}, we find
\begin{equation*}
  q^{(k+1)} = \sum_{p=0}^{k} \binom{k}{p} \sum_{\tau \in \mathfrak{P}_{p}} (1-q)^{-a-|\tau|} \prod_{I \in \tau} q^{(|I|)} \sum_{\sigma \in \mathfrak{P}_{k-p}} (1+q)^{-b-|\sigma|} \prod_{J \in \sigma} q^{(|J|)}.
\end{equation*}
Applying the induction hypothesis we thus find that near $1$
\begin{equation*}
  q^{(k+1)} \sim c \sum_{p=0}^{k} \binom{k}{p} \sum_{\tau \in \mathfrak{P}_{p}} (1-t)^{- \frac{a +|\tau|}{a+1}} \prod_{I \in \tau} (1-t)^{\frac{1}{a+1}- |I|} \sum_{\sigma \in \mathfrak{P}_{k-p}} \prod_{J \in \sigma} (1-t)^{\frac{1}{a+1} - |J|}.
\end{equation*}
Using that for $\tau \in \mathfrak{P}_{p}$, $\sum_{I \in \tau} |I| = p$, we see that exponent in $(1-t)$ in one summand is
\begin{equation*}
  -\frac{a}{a+1} - \frac{|\tau|}{a+1} + \frac{|\tau|}{a+1} - p + \frac{|\sigma|}{a+1} - k +p = \frac{1}{a+1} - (k+1) + \frac{|\sigma|}{a+1}.
\end{equation*}
The worst exponent is thus obtained for $p = k$ in which case $\sigma = \emptyset$.
This gives the announced estimate.
\end{proof}

\parexample{Gamma distribution}\label{s:examples:Gamma}
Over $\mathbb{R}_{+}$, we consider $\gamma(a)$ the gamma distribution with parameter $a > -1$ with density proportional to $\mathrm{e}^{-x} x^{a}$.

\begin{theorem}\label{th:examples:gamma}
  For all $a > -1$, $\gamma(a) \in \mathscr{L}$.
\end{theorem}

\begin{proof}
  The density distribution of $\gamma(a)$ is $\mathscr{C}^{\infty}(\mathbb{R}_{+})$.
  Thus the cumulative distribution function is a $\mathscr{C}^{\infty}$-diffeomorphism, and thus so is the quantile function $q$.
  Moreover, there exists $\varepsilon > 0$ such that $a(1+\varepsilon) > -1$.
  Thus
  \begin{equation*}
    \int_{0}^{\infty} (\mathrm{e}^{-x} x^{a})^{1+\varepsilon} < \infty.
  \end{equation*}
  This shows that \cref{i:examples:general:small-ball} in \cref{th:examples:general} holds.
  We verify that \cref{i:examples:general:smooth} also holds thanks to \cref{th:gamma:quantiles} below.
\end{proof}

\begin{remark}
  Some Gamma distributions can be realized as elements of $\mathbb{D}^{\infty}$ without transporting them from a Gaussian via the quantile functions.
  For instance, for $k \in \mathbb{N}^{*}$, $\gamma(n/2-1)$ is a chi-squared law with $n$ degree of freedom and can thus be realised as a sum of square of Gaussian variables.
  We however give a proof based on quantiles as it allows to cover all the cases.
\end{remark}

\begin{lemma}\label{th:gamma:quantiles}
  Let $q$ be the quantile function of $\mu_{a}$.
  Then,
  \begin{align}
    & \text{as}\ t \to 0, \quad q^{k}(t) \sim t^{\frac{1}{a+1} - k}.
  \\& \text{as}\ t \to 1, \quad q^{k}(t) \sim (1-t)^{-k}.
  \end{align}
\end{lemma}

\begin{proof}
Consider the cumulative distribution function of $\mu_{a}$
\begin{equation*}
  F(x) \coloneq \int_{0}^{x} s^{a} \mathrm{e}^{-s} \mathrm{d} s.
\end{equation*}
As $x \to \infty$, we have
\begin{equation*}
  F(x) \sim 1 - x^{a} \mathrm{e}^{-x}.
\end{equation*}
Using that $F \circ q = \mathsf{id}$, we find that
\begin{equation*}
  \mathrm{e}^{q} q^{-a} \sim (1-t)^{-1}.
\end{equation*}
Now we use that
\begin{equation*}
  q' = \frac{1}{\mathrm{e}^{-q}q^{a}} \sim (1-t)^{-1}.
\end{equation*}
We proceed by induction.
By the Leibniz rule and the Faà di Bruno formula,
\begin{equation*}
  q^{(k+1)} = \sum_{p=0}^{k} \binom{k}{p} \sum_{\tau \in \mathfrak{P}_{p}} \mathrm{e}^{q} \prod_{I \in \tau} q^{(|I|)} \sum_{\sigma \in \mathfrak{P}_{k-p}} q^{-a-|\sigma|} \prod_{J \in \sigma} q^{(|J|)}.
\end{equation*}
By the induction hypothesis, we obtain the equivalent
\begin{equation*}
  q^{(k+1)} \sim \mathrm{e}^{q} q^{-a} \sum_{p=0}^{k} (1-t)^{-p} \sum_{\sigma \in \mathfrak{P}_{k-p}} q^{-|\sigma|} (1-t)^{-k+p}.
\end{equation*}
Even if we do not know an equivalent of $q$ as $t \to 1$, we know hat $q(t) \to +\infty$ as $t \to 1$.
Thus the worst case in the above equivalent is obtained for $p = k$ and then $|\sigma|=0$.
This proves the claim by induction.
Similarly, as $x \to 0$, we have
\begin{equation*}
F(x) \sim \int_{0}^{x} s^{a} \mathrm{d} s = x^{a+1}.
\end{equation*}
This shows that $q \sim t^{\frac{1}{a+1}}$ and $q' \sim t^{\frac{1}{a+1} - 1}$.
By induction, we also show the rest of the claim
\end{proof}

\subsubsection{Stability of our assumption under probabilistic operations}
Actually $\mathscr{L}$ is stable under various natural probabilistic operations.
We write $\mu \ast \nu$ for the convolution of $\mu$ and $\nu \in \mathscr{P}(\mathbb{R})$, that is the law of $X + Y$ with $(X,Y) \sim \mu \otimes \nu$.
Similarly, we write $\mu \diamond \nu$ for the law of $XY$.

\paragraph{Stability under convolution}
\begin{lemma}
  If $\mu$ and $\nu \in \mathscr{L}$.
  Then $\mu \ast \nu \in \mathscr{L}$.
\end{lemma}

\begin{proof}
  Let $X$ and $Y \in \mathbb{D}^{\infty}$ satisfying \cref{ass:small-ball-gamma} with law $\mu$.
  Without loss of generality, we can assume that $X$ and $Y$ are defined on independent Wiener spaces.
  Since $\mathbb{D}^{\infty}$ is a linear space, $X + Y \in \mathbb{D}^{\infty}$.
  Moreover, since $X$ and $Y$ are defined on different Wiener spaces we have
  \begin{equation*}
    \Gamma(X+Y,X+Y) = \Gamma(X,X) + \Gamma(Y,Y).
  \end{equation*}
  Thus $X+Y$ satisfies \cref{ass:small-ball-gamma}.
\end{proof}

\begin{remark}\label{rk:generalized-convolution}
  We could actually consider even more general laws.
  Indeed, if $\mu \in \mathscr{L}$, and $\nu \in \mathscr{P}(\mathbb{R})$, then \cref{th:regularity-quadratic-form} applies for $X \sim \mu \ast \nu$.
  To see this, consider $X \in \mathbb{D}^{\infty}$ and satisfying \cref{ass:small-ball-gamma} with law $\mu$ defined on the Wiener space, and $Y$ with law $\nu$ defined on another probability space $(\Omega_{0}, \mathfrak{W}_{0}, \prob_{0})$.
  In general, $Y$ cannot be realised on a Wiener so our analysis does not apply verbatim.
However, we could equip $(\Omega_{0}, \prob_{0})$ with the trivial carré du champ $\Gamma_{0}(F, F) = 0$ for all random variables $F$ defined on $\Omega_{0}$.
Then we can equip the product space $\bar{\Omega} \coloneq (\mathbb{R}, \mathfrak{B}(\mathbb{R}), \gamma)^{\mathbb{N}} \otimes (\Omega_{0}, \mathfrak{W}_{0}, \prob_{0})$, with he product Dirichlet structure of $\Gamma$ and $\Gamma_{0}$ (see \cite[Chap.~V \S 2]{BouleauHirsch}).
In this case, $\bar{\Gamma}(X+Y, X+Y) = \Gamma(X,X) + \Gamma_{0}(Y,Y) = \Gamma(X,X)$ thus $X+Y$ satisfies the small ball estimate \cref{eq:small-ball-theta}.
Moreover, defining
\begin{equation*}
  \bar{\sharp}_{G}(X+Y) \coloneq \sharp_{G} X \overset{\law}{=} \bar{\Gamma}(X+Y,X+Y)^{1/2} N,
\end{equation*}
and $\bar{\mathbb{D}}^{\infty} \coloneq \mathbb{D}^{\infty} \otimes L^{2}(\prob_{0})$, our abstract result \cref{th:sobolev-regularity-iterated-gradient} extends to this setting.
Indeed, this result is solely based on the Fourier--Laplace identity \cref{eq:laplace-fourier}, the chain rule \cref{eq:chain-rule-carre-du-champ}, the integration by parts \cref{eq:leibniz-formula-3}, and the fact that $\bar{\sharp}_{G}(X+Y) \in \bar{\mathbb{D}}^{\infty}$.
All these ingredients still hold for this generalized setting.
\end{remark}

\paragraph{Stability under product}
\begin{lemma}
  Let $\mu$ and $\nu \in \mathscr{L}$.
  Assume, moreover, that there exists $\theta' > 0$ such that
  \begin{equation*}
    \int y^{-\theta'} \nu(\mathrm{d}y) < \infty.
  \end{equation*}
  Then $\mu \diamond \nu \in \mathscr{L}$.
\end{lemma}

\begin{proof}
  Let $X$ and $Y \in \mathbb{D}^{\infty}$ satisfying \cref{ass:small-ball-gamma}.
  Without loss of generality, we assume that $X$ and $Y$ are defined on independent Wiener spaces.
  Since $\mathbb{D}^{\infty}$ is an algebra, we find that $XY \in \mathbb{D}^{\infty}$.
  Moreover,
  \begin{equation*}
    \Gamma(XY,XY) = X^{2} \Gamma(Y,Y) + Y^{2} \Gamma(X,X) \geq Y^{2} \Gamma(X,X).
  \end{equation*}
  Thus $XY$ satisfies \cref{ass:small-ball-gamma}.
\end{proof}

\begin{remark}
  As in \cref{rk:generalized-convolution}, we could generalize the result above.
  If $\mu \in \mathscr{L}$ and $\nu \in \mathscr{P}(\mathbb{R})$ with some negative finite negative moments, then we can apply \cref{th:regularity-quadratic-form} whenever $X \sim \mu \diamond \nu$.
  As in \cref{rk:generalized-convolution}, we construct a Dirichlet form on the product that is trivial on the $Y$-component.
  In this case we find
  \begin{equation*}
    \bar{\Gamma}(XY,XY) = Y^{2} \Gamma(X,X),
  \end{equation*}
  and the small ball estimate \cref{eq:small-ball-theta} holds.
\end{remark}

\parexample{Multi-linear chaos}\label{s:examples:chaos}
The two operations above allow us to consider multi-linear polynomials.
For instance consider $(Z_{i})$ a sequence of independent random variables with (possible non identical) laws are taken among Gaussian, Beta, or Gamma distribution.
Then, $p(Z_{1}, \dots, Z_{n})$ where $p$ is a multi-linear polynomials has also a law in $\mathscr{L}$.

\subsection{Applications}

\subsubsection{Improving normal convergence for quadratic forms}\label{s:normal-convergence}

In this section, we consider a sequence of independent copies $(X_{i})$ of some $X \in \mathbb{D}^{\infty}$ satisfying \cref{ass:small-ball-gamma}.
We also consider a sequence of quadratic forms
\begin{equation*}
  Q_{n} \coloneq \sum_{i,j} a_{ij}^{(n)} X_{i} X_{j},
\end{equation*}
for some sequence $\set*{ \mathsf{A}^{(n)} \coloneq (a_{ij}^{(n)}) : n \in \mathbb{N} }$ of symmetric Hilbert--Schmidt operators with vanishing diagonal.
In this section, we assume that $(\mathsf{A}^{(n)})$ satisfies some conditions ensuring that $(Q_{n})$ converges in law to a Gaussian distribution, and we show that the convergence is automatically improved in $\mathscr{C}^{\infty}$-convergence.
By this, we mean that for all $q \in \mathbb{N}$, there exists $N_{q} \in \mathbb{N}$ such that for $n \geq N_{q}$, the law of $Q_{n}$ has a $\mathscr{C}^{q}$-density and that this density converges in $\mathscr{C}^{q}$ to that of a standard Gaussian.

\paragraph*{Leptokurtic random variables}

We say that a random variable $Z$ is \emph{leptokurtic} whenever $\Esp*{Z} = 0$, $\Esp*{Z^{2}} = 1$, and $\Esp*{Z^{4}} \geq \Esp*{N^{4}} = 3$ where $N$ is a standard Gaussian variable. 

\begin{proposition}
  With the above notation, assume moreover that $X_{1}$ is leptokurtic and that
\begin{equation*}
  Q_{n} \xrightarrow[n \to \infty]{law} \mathscr{N}(0,1).
\end{equation*}
Then,
\begin{equation*}
  Q_{n} \xrightarrow[n \to \infty]{\mathscr{C}^{\infty}} \mathscr{N}(0,1).
\end{equation*}
\end{proposition}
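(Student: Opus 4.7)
The plan is to show that asymptotic normality, together with the leptokurtic assumption, forces the spectral hypotheses of \cref{th:regularity-quadratic-form}; this yields uniform Sobolev bounds on the densities $f_{n}$ of $Q_{n}$, which can then be upgraded to $\mathscr{C}^{\infty}$ convergence by an Arzelà--Ascoli compactness argument.

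I would first establish convergence of moments. Since $X_{1} \in \mathbb{D}^{\infty}$, each $Q_{n}$ is a multilinear polynomial of degree two in iid random variables with finite moments of every order, and classical hypercontractive estimates for such polynomials give $\norm{Q_{n}}_{\mathbb{L}^{p}} \lesssim_{p} \norm{Q_{n}}_{\mathbb{L}^{2}}$ for every $p < \infty$. Combined with the tightness coming from convergence in law, this yields uniform integrability of $(Q_{n}^{k})$ and therefore convergence of every moment to the Gaussian one; in particular $\Esp*{Q_{n}^{2}} = 2\tr((\mathsf{A}^{(n)})^{2}) \to 1$ and $\Esp*{Q_{n}^{4}} \to 3$. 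A direct enumeration of the pairings contributing to $\Esp*{Q_{n}^{4}}$, using independence and the vanishing-diagonal assumption, produces an identity of the form
\begin{equation*}
  \Esp*{Q_{n}^{4}} - 3\,\Esp*{Q_{n}^{2}}^{2} = 48\,\tr\paren*{(\mathsf{A}^{(n)})^{4}} + (K-3)\,P(\mathsf{A}^{(n)}),
\end{equation*}
where $K \coloneq \Esp*{X_{1}^{4}}$ and $P(\mathsf{A})$ is a sum of non-negative polynomial expressions in the entries of $\mathsf{A}$. The leptokurtic hypothesis $K \geq 3$ makes the right-hand side a sum of two non-negative terms; as the left-hand side tends to $0$, each of them does. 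In particular $\rho(\mathsf{A}^{(n)})^{4} \leq \tr((\mathsf{A}^{(n)})^{4}) \to 0$.

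Set $\widetilde{\mathsf{A}}^{(n)} \coloneq \mathsf{A}^{(n)}/\tr((\mathsf{A}^{(n)})^{2})^{1/2}$, so that $\tr((\widetilde{\mathsf{A}}^{(n)})^{2}) = 1$; since the rescaling factor converges to $1/\sqrt{2}$, Sobolev norms of $Q_{n}$ and of $Q_{\widetilde{\mathsf{A}}^{(n)}}$ differ only by bounded multiplicative constants. We still have $\rho(\widetilde{\mathsf{A}}^{(n)}) \to 0$, so \cref{th:spectral-radius-implies-influence} gives, for every fixed $q \in \mathbb{N}$,
\begin{equation*}
  \liminf_{n} \mathcal{R}_{q}(\widetilde{\mathsf{A}}^{(n)}) > 0, \qquad \tau(\widetilde{\mathsf{A}}^{(n)}) \leq \rho(\widetilde{\mathsf{A}}^{(n)})^{2} \to 0.
\end{equation*}
Thus hypotheses \eqref{ass:normalized-A}, \eqref{ass:positivity-spectral-remainder-A}, and \eqref{ass:small-influence-A} of \cref{th:regularity-quadratic-form} are eventually satisfied, and that theorem yields the uniform bound $\mathbf{W}_{q,1}(Q_{n}) \lesssim \mathcal{R}_{128q+18}(\widetilde{\mathsf{A}}^{(n)})^{-\eta_{q}} = O(1)$ for every $q \in \mathbb{N}$.

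The densities $f_{n}$ are therefore uniformly bounded in every Sobolev space $\mathscr{W}^{q,1}$, and by the Sobolev embedding \cref{eq:embedding-sobolev-holder} they form a uniformly bounded and equicontinuous family in $\mathscr{C}^{k}$ for every $k$. A diagonal Arzelà--Ascoli extraction produces, from every subsequence, a further subsequence converging in $\mathscr{C}^{k}$ for each $k$; the limit must be the density of $\mathcal{N}(0,1)$ by convergence in law, so the full sequence $(f_{n})$ converges to the Gaussian density in $\mathscr{C}^{\infty}$. The main technical obstacle is the fourth-moment identity above: one must carefully enumerate the partition pairings contributing to $\Esp*{Q_{n}^{4}}$ under the vanishing-diagonal constraint, isolate the Gaussian part $48\tr((\mathsf{A}^{(n)})^{4})$ from the leptokurtic correction, and verify that every contribution involving $K-3$ carries a non-negative coefficient. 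Once this identity is in hand, the remaining steps---hypercontractivity for uniform integrability, \cref{th:spectral-radius-implies-influence} to translate spectral-radius control into remainder and influence bounds, and Arzelà--Ascoli to pass from uniform Sobolev bounds to smooth convergence---are routine.
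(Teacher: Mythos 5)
Your overall strategy coincides with the paper's: deduce $\rho(\mathsf{A}^{(n)}) \to 0$ from the normal convergence and the leptokurtic hypothesis, then feed this through \cref{th:spectral-radius-implies-influence} and \cref{th:regularity-quadratic-form} to get uniform Sobolev bounds, and finally pass to $\mathscr{C}^{\infty}$ convergence. The difference is that where you sketch a self-contained fourth-moment argument (hypercontractivity for uniform integrability, moment convergence, the cumulant identity $\kappa_{4}(Q_{n}) = 48\tr((\mathsf{A}^{(n)})^{4}) + (K-3)P(\mathsf{A}^{(n)})$ with $P \geq 0$ under the vanishing-diagonal constraint, hence $\tr((\mathsf{A}^{(n)})^{4}) \to 0$), the paper simply cites \cite{NPPS} for exactly this implication. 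That identity, together with the sign of $P$, is precisely the content of the fourth-moment theorem of Nourdin--Peccati--Poly--Simone, and it is the step you correctly flag as the only nontrivial combinatorics in the plan; re-deriving it is legitimate but offers no shortcut over the citation. You are also more explicit than the paper on two routine points it leaves implicit: the renormalization $\widetilde{\mathsf{A}}^{(n)} = \mathsf{A}^{(n)}/\tr((\mathsf{A}^{(n)})^{2})^{1/2}$ needed because \cref{th:regularity-quadratic-form} assumes $\tr\mathsf{A}^{2}=1$, and the Arzelà--Ascoli diagonal extraction that upgrades uniform Sobolev bounds plus convergence in law to convergence of the densities in every $\mathscr{C}^{k}$. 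Both additions are sound and worth making explicit. One small reading note: in the paper's proof the spectral-remainder conclusion is written with $\limsup$, but what is actually used (and what \cref{th:spectral-radius-implies-influence} delivers, since $\rho(\mathsf{A}^{(n)}) \to 0$ forces $\mathcal{R}_{q}(\widetilde{\mathsf{A}}^{(n)}) \to 1$) is the $\liminf$ bound you state, matching the hypothesis of \cref{th:intro-regularity-quadratic-form}.
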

\begin{proof} 
By \cite{NPPS}, we find that the spectral radius $\rho(\mathsf{A}^{(n)}) \to 0$ as $n \to \infty$.
Let $q \in \mathbb{N}$.
By \cref{th:spectral-radius-implies-influence} this implies that $\limsup \mathcal{R}_{q}(\mathsf{A}^{(n)}) > 0$ and $\tau(\mathsf{A}^{(n)}) \to 0$.
We apply \cref{th:regularity-quadratic-form} to conclude.
\end{proof}

\subsubsection{Improving the Carbery--Wright inequality}
Assume that the $X_{i}$'s are centered, normalized, and with log-concave, and let $Q \coloneq \psh{X}{\mathsf{A} X}$ be a random quadratic form in the $X_{i}$'s.
Then, a celebrated result of \citeauthor{CarberyWright} \cite{CarberyWright} states that
\begin{equation}\label{eq:carbery-wright}
  \Prob*{ Q \leq \varepsilon } \lesssim \varepsilon^{1/2}.
\end{equation}
However, whenever $\rho(\mathsf{A})$ is small enough \cref{th:regularity-quadratic-form} ensures that the law of $Q$ has a continuous density.
Thus \cref{eq:carbery-wright} is improved to
\begin{equation}\label{eq:carbery-wright-improved}
  \Prob*{ Q \leq \varepsilon } \lesssim \varepsilon.
\end{equation}
Note that \cref{eq:carbery-wright} holds for any $(X_{i})$ whose joint law is a log-concave distribution (not necessarily independent); while \cref{eq:carbery-wright-improved} holds for sequence of independent random variables such that \cref{ass:small-ball-gamma} hold and the influence is small enough.
The use of the Carbery--Wright inequality is ubiquitous in probability theory, and an improvement of their inequality could improve several fundamental results in probability theory.
We are planning to explore in details such consequences in a future work.

\printbibliography%
\end{document}